\documentclass[12pt]{amsart}

\usepackage{amsmath, amssymb} 
\usepackage{graphicx} 
\usepackage{wrapfig}

\textwidth=17cm 
\textheight=21cm 
\setlength{\oddsidemargin}{-3mm} 
\setlength{\evensidemargin}{-3mm} 

\newtheorem{theorem}{Theorem}[section]
\newtheorem{proposition}[theorem]{Proposition}

\numberwithin{equation}{section}

\begin{document} 
\title[]{2-knot homology and Roseman move} 

\author[]{Hiroshi MATSUDA} 

\address{
Faculty of Science, 
Yamagata University, Yamagata 990-8560, JAPAN}

\email{matsuda@sci.kj.yamagata-u.ac.jp} 

\thanks{The author is partially supported by 
JSPS KAKENHI Grant number 15K04865.} 

\maketitle

\begin{abstract} 
Ng constructed an invariant of knots in ${\mathbb{R}}^3$, 
a combinatorial knot contact homology. 
Extending his study, 
we construct an invariant of surface-knots in ${\mathbb{R}}^4$ 
using diagrams in ${\mathbb{R}}^3$. 
\end{abstract}

\section{Introduction}

Topological invariants of knots in ${\mathbb{R}}^3$ are 
constructed by Ng \cite{ng1}, \cite{ng2}, \cite{ng-frame}, \cite{ng3}, 
in a combinatorial method. 
These invariants are 
equivalent to 
the {\it knot contact homology}, 
constructed by Ekholm, Etnyre, Ng, Sullivan \cite{kch} 
in symplectic topology, 
and extended by Cieliebak, Ekholm, Latschev, Ng \cite{cieliebak}. 
The knot contact homology 
detects several classes of knots \cite{lidman-gordon}, and 
an enhancement of the knot contact homology 
is a complete invariant of knots \cite{ekholm-ng-shende}. 

A {\it surface}-{\it knot} $F$ is a closed connected oriented surface 
embedded locally flatly in ${\mathbb{R}}^4$. 
For a 
projection $\pi \colon {\mathbb{R}}^4 \to {\mathbb{R}}^3$, 
we may assume 
that the projection $\pi |_F$ is {\it generic}, 
that is, $\pi |_F$ has double points, 
isolated triple points and isolated branch points in the image 
as its singularities. 
A {\it diagram} of $F$ 
is a generic projection $\pi (F)$ 
equipped with a height information with respect to $\pi$. 
Extending Ng's 
construction of knot invariants, 
we define, in $\S$2 and $\S$10, 
differential graded algebras $(CR^{\varepsilon \delta}(D), \partial)$ 
associated with a diagram $D$ of 
$F$, where $\varepsilon, \delta \in \{ +, - \}$.

\begin{theorem} \label{main-theorem} 
Let $D_1, D_2$ denote diagrams of a surface-knot in ${\mathbb{R}}^4$. 
Then the differential graded algebra $(CR^{\varepsilon \delta} (D_1), \partial)$ 
is stably tame isomorphic to 
$(CR^{\varepsilon \delta} (D_2), \partial)$, 
where $\varepsilon, \delta \in \{ +, - \}$. 
\end{theorem}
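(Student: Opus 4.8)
The plan is to reduce the statement to a local analysis of the Roseman moves. By Roseman's theorem, any two diagrams $D_1, D_2$ of the same surface-knot in ${\mathbb{R}}^4$ are related by a finite sequence of ambient isotopies of ${\mathbb{R}}^3$ together with the local moves on broken surface diagrams that also track the height information with respect to $\pi$ --- the Roseman moves. Since the differential graded algebra $(CR^{\varepsilon\delta}(D),\partial)$ is defined purely from the combinatorial data of the diagram (its double-point curves, triple points and branch points, together with the height data), a diagram isotopy of ${\mathbb{R}}^3$ that introduces no new singularities leaves the generators and $\partial$ unchanged. Thus it suffices to show that each Roseman move changes $(CR^{\varepsilon\delta}(D),\partial)$ only by a stably tame isomorphism.

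Next I would record that stable tame isomorphism is an equivalence relation: it is reflexive, symmetric and transitive, and in particular it is preserved under composition. Consequently, verifying invariance under each move in the generating set of Roseman moves, one at a time, and then concatenating the resulting stably tame isomorphisms along the sequence relating $D_1$ to $D_2$, yields the theorem.

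Then, move by move, I would compare the DGAs of the two local pictures. The moves fall into two types. For those preserving the numbers of double-point curves, triple points and branch points --- most notably the triple-point (tetrahedral) move --- there is a natural bijection of generators, and one constructs a tame isomorphism by an explicit elementary substitution $a_i \mapsto a_i + (\text{terms not involving } a_i)$ that intertwines the two differentials. For the moves that create or destroy such features --- those adding a pair of double-point arcs, or a branch point, or a triple point --- the generator sets differ, so one first stabilizes by adjoining algebraically cancelling pairs and then exhibits a tame isomorphism onto the stabilized algebra, choosing the new generators to match exactly those appearing or disappearing under the move.

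The main obstacle will be verifying that each candidate substitution is a chain map, i.e. that it commutes with $\partial$. This is where the combinatorics of the differential must be controlled in a neighbourhood of a triple point: one must show that the configurations counted by $\partial$ before and after a move correspond precisely under the substitution, including signs and the gradings recorded by $\varepsilon,\delta$. The triple-point and sheet-through-triple-point moves are the delicate cases, since there $\partial$ mixes contributions from several double-point curves simultaneously; careful bookkeeping of these contributions, rather than any single hard estimate, is what the proof hinges on.
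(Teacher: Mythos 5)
Your proposal follows essentially the same route as the paper: invoke Roseman's theorem (the paper uses Yashiro's refinement to six move types) to reduce to checking each local move, use that stable tame isomorphism is an equivalence relation so the moves can be concatenated, and verify invariance move by move via explicit tame isomorphisms together with stabilizations and destabilizations. The one small divergence is that for the moves preserving the singularity counts (notably the tetrahedral move VII) the paper does not build a direct generator-to-generator tame isomorphism --- the sheets of the two local pictures do not biject naturally --- but instead destabilizes both $(CR^{\varepsilon\delta}(D_1),\partial)$ and $(CR^{\varepsilon\delta}(D_2),\partial)$ along chains of singularities down to a common core and checks the results are isomorphic, a device it also uses for moves IV and VI.
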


Let $D$ denote a diagram of a surface-knot $F$ in ${\mathbb{R}}^4$. 
Theorem \ref{main-theorem} shows that 
the stably tame isomorphism class of 
$(CR^{\varepsilon \delta}(D), \partial)$ is an invariant of $F$, 
which we denote by $(CR^{\varepsilon \delta}(F), \partial)$. 
Therefore 
the homology of $(CR^{\varepsilon \delta}(F), \partial)$ is an invariant of $F$, 
which we denote by $HR^{\varepsilon \delta}(F)$ 
and call {\it Roseman homology}. 
In $\S \S$3--9, 
we give a proof of Theorem \ref{main-theorem} 
when $(\varepsilon, \delta) = (-, -)$. 
The cases when $(\varepsilon, \delta) = (-, +), (+, -), (+, +)$ 
are proved similarly. 
In $\S$11, we show that at least three of the four differential graded algebras 
$(CR^{--}(F), \partial)$, $(CR^{-+}(F), \partial)$, 
$(CR^{+-}(F), \partial)$, $(CR^{++}(F), \partial)$ 
are distinct when $F$ is the 2-twist spun-trefoil, and that 
they distinguish the spun-trefoil \cite{artin} 
from the 2-twist spun-trefoil \cite{zeeman}. 

\begin{theorem} \label{2-twist} 
Let $F$ denote the 2-twist spun-trefoil in ${\mathbb{R}}^4$. 
Then 
$(CR^{--} (F), \partial)$ is not stably tame isomorphic to 
$(CR^{\varepsilon \delta} (F), \partial)$, 
where $(\varepsilon, \delta) = (-, +), (+, -), (+, +)$. 
Moreover,  
$(CR^{++} (F), \partial)$ is not stably tame isomorphic to 
$(CR^{-+} (F), \partial)$, 
$(CR^{+-} (F), \partial)$. 
\end{theorem}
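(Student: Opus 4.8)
The plan is to extract from each differential graded algebra $(CR^{\varepsilon\delta}(F),\partial)$ a computable invariant of its stable tame isomorphism class, and then to exhibit an explicit diagram of the 2-twist spun-trefoil on which these invariants take different values for the relevant pairs of signs. Since a tame isomorphism is in particular a chain isomorphism, and a stabilization adjoins an acyclic pair of generators (a generator together with its differential), the graded homology $HR^{\varepsilon\delta}(F)$ is an invariant of the stable tame isomorphism class by Theorem~\ref{main-theorem}, and so is anything functorially built from the DGA, such as its set of augmentations and the associated linearized homologies. It therefore suffices to compute a sufficiently fine such invariant for one convenient diagram and to check that it separates $(CR^{--}(F),\partial)$ from the other three algebras, and separates $(CR^{++}(F),\partial)$ from $(CR^{-+}(F),\partial)$ and $(CR^{+-}(F),\partial)$.

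First I would fix a standard broken-surface (or motion-picture) diagram $D$ of the 2-twist spun-trefoil, for which the numbers of double-point curves, triple points and branch points are explicitly known, and read off the generators of $CR^{\varepsilon\delta}(D)$ together with their gradings directly from the combinatorial rules of $\S$2 and $\S$10. Next I would write down the differential $\partial$ on each of the four algebras. Because the four sign choices enter the construction only through the prescribed $\varepsilon,\delta$-decorations on the terms of $\partial$, the four generating sets and gradings coincide and the four differentials differ only in controlled bookkeeping, so the problem reduces to one finite algebraic computation performed four times.

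The decisive step is to find an invariant that is simultaneously computable and sharp enough to separate the classes. I would work with the set of augmentations of $(CR^{\varepsilon\delta}(D),\partial)$ into a small coefficient ring (for instance $\mathbb{Z}/2$ or $\mathbb{Z}/3$, or a polynomial ring), since the augmentation set is preserved under stable tame isomorphism and, for a DGA with few low-degree generators, is determined by solving the finitely many equations forcing $\partial(\text{generator})$ to vanish. For each admissible augmentation I would then form the linearized differential and record its homology, equivalently its Poincar\'e polynomial; the collection of these linearized homologies over all augmentations is the invariant I would compare. The claim is that this invariant distinguishes $--$ from each of $-+,+-,++$, and distinguishes $++$ from $-+,+-$, while a residual symmetry of the 2-twist spun-trefoil interchanging the two sign slots identifies $(CR^{-+}(F),\partial)$ with $(CR^{+-}(F),\partial)$, which is precisely why only three of the four algebras are shown to be distinct.

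The main obstacle I anticipate is neither the invariance (handled by Theorem~\ref{main-theorem}) nor any conceptual difficulty, but the computation itself: on a genuine surface-knot diagram the differential involves many generators arising from the triple points and branch points, and the nonlinear terms in $\partial$ make a direct calculation of $HR^{\varepsilon\delta}$ infeasible by hand. The work therefore lies in choosing the diagram well, possibly performing a preliminary change of generators that cancels as many acyclic pairs as possible, so that the linearized complexes become small enough to diagonalize explicitly, and in verifying that the augmentation equations have the asserted solution sets for each of the four sign conventions. Once the linearized homologies are in hand, comparing their Poincar\'e polynomials is immediate and yields the two non-isomorphism statements.
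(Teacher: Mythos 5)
Your strategy coincides with the paper's: both fix the Satoh--Shima diagram $D^2(2,3)$ of the 2-twist spun-trefoil and separate the four differential graded algebras by counting algebra maps into $\mathbb{Z}/3\mathbb{Z}$ that annihilate the boundaries --- the paper phrases this as counting homomorphisms ${\mathcal{C}}^{\varepsilon\delta}(D^2(2,3))\otimes\mathbb{Z}/3\mathbb{Z}\to\mathbb{Z}/3\mathbb{Z}$ from the characteristic algebra, up to powers of $3$, obtaining $1+3^9,\ 1+3^8,\ 1+3^8,\ 1+3^{10}$, which is in substance your augmentation count (the linearized homologies are not needed). One caveat: your asserted ``residual symmetry'' identifying $(CR^{-+}(F),\partial)$ with $(CR^{+-}(F),\partial)$ is not established in the paper --- the paper merely fails to distinguish them, and the theorem does not claim they are isomorphic --- so that remark should be dropped or proved separately.
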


\begin{theorem} \label{plus-minus} 
The tuple of four differential graded algebras 
$(CR^{--}(F), \partial)$, $(CR^{-+}(F), \partial)$, 
$(CR^{+-}(F), \partial)$, $(CR^{++}(F), \partial)$ 
distinguishes the spun-trefoil 
from the 2-twist spun-trefoil in ${\mathbb{R}}^4$. 
\end{theorem}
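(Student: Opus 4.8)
The plan is to replace the full tuple by a coarser but more tractable invariant: the partition of the four sign-choices $\{(-,-),(-,+),(+,-),(+,+)\}$ into blocks under the relation ``$(CR^{\varepsilon\delta}(F),\partial)$ is stably tame isomorphic to $(CR^{\varepsilon'\delta'}(F),\partial)$''. By Theorem \ref{main-theorem} each individual class $(CR^{\varepsilon\delta}(F),\partial)$ is an invariant of $F$, so the resulting coincidence pattern is itself an invariant of $F$, read off from the tuple. Hence if the spun-trefoil $F_s$ and the 2-twist spun-trefoil $F_t$ were equivalent surface-knots, their patterns would coincide, and it suffices to exhibit two different partitions.

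For $F_t$ the partition is essentially determined by Theorem \ref{2-twist}. Since $(CR^{--}(F_t),\partial)$ is not stably tame isomorphic to any of the other three, and $(CR^{++}(F_t),\partial)$ is not stably tame isomorphic to $(CR^{-+}(F_t),\partial)$ or $(CR^{+-}(F_t),\partial)$, the blocks $\{(-,-)\}$ and $\{(+,+)\}$ are singletons, and $(-,+)$ lies in neither. Thus the partition of $F_t$ has at least three blocks. It therefore suffices to prove that the partition of $F_s$ has strictly fewer than three blocks.

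To obtain the partition for $F_s$ I would compute the four algebras from the diagram $D_s$ produced by Artin's spinning of a standard three-crossing diagram of the trefoil. The key structural feature is that $D_s$ has \emph{no} triple points and \emph{no} branch points: its singular set is a disjoint union of circles of double points, one circle spun off of each crossing. I expect the sign parameters $\varepsilon$ and $\delta$ to enter $\partial$ only through contributions localized at triple points (and branch points), so that on $D_s$ the four differentials either coincide outright or are interchanged by the evident rotational symmetry of the spinning construction. Carrying this out should collapse the partition of $F_s$ to one block (or at worst two), which already differs from the at-least-three-block partition of $F_t$.

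The main obstacle is precisely this last computation. Because the statement asserts only that the tuple \emph{distinguishes} the two surface-knots, I do not need the absolute homologies $HR^{\varepsilon\delta}(F_s)$; I need only enough control of $\partial$ on the triple-point-free diagram $D_s$ to build explicit stable tame isomorphisms identifying the relevant sign-variants. The real work is to unwind the definitions of $\S$2 and $\S$10 on $D_s$, to verify that the change-of-sign maps are tame and require no further stabilization, and to confirm that at least one pair which is distinct for $F_t$ becomes stably tame isomorphic for $F_s$. Once such a coincidence is established, the partitions of $F_s$ and $F_t$ differ, and the tuple of four differential graded algebras distinguishes the spun-trefoil from the 2-twist spun-trefoil.
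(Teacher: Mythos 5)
Your proposal is correct and takes essentially the same route as the paper: the paper's proof simply observes that the spun-trefoil admits a triple-point-free diagram $D^0(2,3)$, on which all four differentials coincide (so the four DGAs are isomorphic), while Theorem \ref{2-twist} shows they are not all stably tame isomorphic for the 2-twist spun-trefoil, and Theorem \ref{main-theorem} makes this coincidence pattern an invariant. The computation you flag as the ``main obstacle'' is in fact immediate from the definitions of $\S$10, where $\partial$ on generators involving no triple point is declared to be identical for all four sign choices, so no change-of-sign isomorphisms or stabilizations need to be constructed.
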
 

Ng and Gadgil \cite{ng2} defined a {\it cord ring} 
for codimension-2 submanifolds, and 
they proved the following. 

\begin{theorem} \cite{ng2} 
The cord ring distinguishes between the unknotted $S^2$ in $S^4$ 
and the spun-knot obtained from any knot in $S^3$ 
with non-trivial cord ring. 
\end{theorem}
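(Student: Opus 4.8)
The plan is to use the fact that the cord ring of a codimension-2 submanifold $M \subset N$ is a homotopy invariant of the complement pair: it is determined by $\pi_1(N \setminus M)$ together with the conjugacy class of a meridian $\mu$. Granting this, I would reduce the assertion to comparing two fundamental-group-plus-meridian systems, namely that of the unknotted $S^2 \subset S^4$ and that of a spun-knot $\sigma(K) \subset S^4$, and then checking that the resulting cord rings differ whenever $K$ has non-trivial cord ring. The argument thus has three steps: compute the cord ring of the unknotted sphere, compute the cord ring of a spun-knot, and compare.

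First I would treat the unknotted $S^2$. Since $S^4$ is the join $S^2 * S^1$ and the complement of one join factor deformation retracts onto the other, $S^4 \setminus S^2$ is homotopy equivalent to $S^1$, so $\pi_1 \cong \mathbb{Z}$ is generated by a single meridian $\mu$. Every cord can then be homotoped, through cords, to a standard short arc encircling $\mu$, so the cord ring coincides with that of the unknot $S^1 \subset S^3$; this is the \emph{trivial} cord ring, and it serves as the baseline against which everything is measured.

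Next I would compute the cord ring of the spun-knot $\sigma(K)$. By Artin's classical analysis of spinning, $\pi_1(S^4 \setminus \sigma(K)) \cong \pi_1(S^3 \setminus K)$, and under this isomorphism the meridian of $\sigma(K)$ corresponds to the meridian of $K$. Exploiting the $S^1$-symmetry of the spinning construction, I would push any cord of $\sigma(K)$ into a single meridional $S^3$-slice, where it becomes a cord of $K$; verifying that this correspondence is a bijection on homotopy classes and respects both the skein relations and the ring operations yields an isomorphism between the cord ring of $\sigma(K)$ and that of $K$. Combining the two computations, if $K$ has non-trivial cord ring then $\sigma(K)$ inherits a cord ring isomorphic to it, which is therefore \emph{not} isomorphic to the trivial cord ring of the unknotted $S^2$ — exactly the asserted distinction.

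The main obstacle I expect is the honest reduction of four-dimensional cords to three-dimensional cords under spinning: proving that the codimension-2 cord ring really depends only on $\pi_1$ of the complement and the meridian, and that the spinning symmetry lets one realize every homotopy class of cord, together with every skein resolution, inside one meridional slice without creating relations absent in $S^3$. Pinning down the meridian precisely under Artin's isomorphism, and ruling out extra relations introduced by the extra dimension, is where the real work lies; the fundamental-group computation itself is standard.
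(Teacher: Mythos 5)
This statement is not proved in the paper at all: it is imported verbatim from \cite{ng2}, where it is established (via the group-theoretic description of the cord ring and the appendix treating spun knots). So there is no in-paper proof to compare against; your proposal has to be judged against the source. Your outline is the standard argument and matches the route taken in \cite{ng2}: the cord ring of a codimension-2 submanifold is determined by the fundamental group of the complement together with the meridian, the complement of the unknotted $S^2$ is homotopy equivalent to $S^1$ so its cord ring is the trivial (base ring) one, and Artin's computation shows spinning preserves the knot group and meridian, so the spun knot inherits the cord ring of $K$.

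That said, what you have written is an outline, not a proof: the entire mathematical content of the theorem sits in the step you defer to the end, namely that the cord ring of $M\subset N$ really is a functor of the pair $(\pi_1(N\setminus M),\mu)$ (equivalently, that every cord of $\sigma(K)$ can be pushed into a meridional slice and that no new skein relations appear in dimension four). Without that, the reduction to Artin's isomorphism proves nothing. You correctly identify this as the obstacle, but you do not supply the argument; in \cite{ng2} it is carried out by re-expressing the cord ring purely in terms of the group system (generators indexed by elements of $\pi_1$, relations involving the meridian), after which the comparison of the unknotted $S^2$ with $\sigma(K)$ is immediate. One smaller point to be careful with: ``trivial cord ring'' should be pinned down as the cord ring of the unknot (the base ring), since that is the normalization against which ``non-trivial'' is measured in the statement.
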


The cord ring using {\it generic} near homotopy of cords calculates 
the 0-dimensional part of Roseman homology $HR^{\varepsilon \delta} (F)$. 
A direct calculation shows the following. 

\begin{theorem} \label{2-twist-spun-trefoil}
The 0-dimensional Roseman homology 
$HR_0^{\varepsilon \delta} (T^2(2, 3))$ 
of $T^2(2, 3)$ in ${\mathbb{R}}^4$ is isomorphic to 
$HR_0^{\varepsilon \delta} (T^0(2, 3))$ of $T^0(2, 3)$ in ${\mathbb{R}}^4$. 
\end{theorem}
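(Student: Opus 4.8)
The plan is to use the identification, stated just above, of the $0$-dimensional Roseman homology $HR_0^{\varepsilon\delta}(F)$ with the Ng--Gadgil cord ring \cite{ng2} computed by generic near homotopy of cords, and to show that this ring takes the same value on $T^0(2,3)$ and $T^2(2,3)$. First I record the homotopy-theoretic input. Since each of $T^0(2,3)$ and $T^2(2,3)$ is an embedded $2$-sphere with trivial normal bundle, the boundary of a tubular neighborhood is $S^1\times S^2$ and the peripheral subgroup is the infinite cyclic group $\langle\mu\rangle$ generated by the meridian, with no longitude. Consequently a cord, i.e.\ a path in $\mathbb{R}^4$ with endpoints on $F$ and interior in the complement, is determined up to the relevant homotopy by the double coset it represents in $\langle\mu\rangle\backslash\pi/\langle\mu\rangle$, where $\pi=\pi_1(\mathbb{R}^4\setminus F)$, and the cord ring is presented by these double cosets as generators together with the trivial-cord and skein relations of \cite{ng2}. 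In particular the cord ring is determined by the pair $(\pi,\mu)$, so it suffices to compute this pair for each surface-knot and to compare the resulting rings.

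Next I compute $(\pi,\mu)$ in the two cases. For the spun-trefoil one has $\pi\cong\pi_1(S^3\setminus T(2,3))=\langle a,b\mid aba=bab\rangle$ with $\mu=a$ a meridian. For the $2$-twist spun-trefoil I invoke Zeeman's fibration: the complement fibers over $S^1$ with fiber the once-punctured double branched cover of $(S^3,T(2,3))$, namely $L(3,1)$ minus a ball, and monodromy the covering involution. Since $H_1(L(3,1))=\mathbb{Z}/3$ and the covering involution acts by $-1$, this gives $\pi\cong\mathbb{Z}/3\rtimes\mathbb{Z}=\langle x,t\mid x^3=1,\ txt^{-1}=x^{-1}\rangle$ with meridian $\mu=t$. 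I then record the key algebraic bridge between the two: the assignment $a\mapsto t$, $b\mapsto tx$ respects $aba=bab$ and is onto, giving a meridian-preserving surjection $\phi\colon\pi_1(\mathbb{R}^4\setminus T^0(2,3))\twoheadrightarrow\pi_1(\mathbb{R}^4\setminus T^2(2,3))$ and hence, by naturality of the cord-ring presentation, a surjective ring homomorphism $\phi_*\colon HR_0^{\varepsilon\delta}(T^0(2,3))\to HR_0^{\varepsilon\delta}(T^2(2,3))$.

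It remains to prove $\phi_*$ is an isomorphism, and this is where the actual calculation is done. On the $T^2(2,3)$ side the double-coset set $\langle t\rangle\backslash(\mathbb{Z}/3\rtimes\mathbb{Z})/\langle t\rangle$ has exactly two elements, the trivial coset and the class of $x$ (from $t^a x^i t^b=x^{(-1)^a i}t^{a+b}$, which identifies $x^i$ with $x^{-i}$), so $HR_0^{\varepsilon\delta}(T^2(2,3))$ is generated over $\mathbb{Z}[\mu^{\pm1}]$ by a single nontrivial cord $c$ subject to one relation coming from the skein relation on the order-three fiber. On the $T^0(2,3)$ side I would present the cord ring on the double cosets of $\langle a\rangle$ and reduce: using the trefoil relation $aba=bab$ inside the skein relations, every cord class is rewritten as a $\mathbb{Z}[\mu^{\pm1}]$-combination of the trivial cord and the single nontrivial cord that $\phi_*$ sends to $c$, with precisely the same defining relation. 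Matching the two presentations shows $\phi_*$ is also injective, hence an isomorphism, and the computation is uniform in $\varepsilon,\delta\in\{+,-\}$, giving the four asserted isomorphisms at once. I expect the main obstacle to be exactly this collapse on the spun side: although $\langle a\rangle\backslash\pi/\langle a\rangle$ is infinite, the skein relations must be shown to force all but one nontrivial cord class into the span of the others, i.e.\ the kernel of $\phi$ must be proved invisible to the degree-$0$ cord ring, and carrying out this reduction carefully against the relation $aba=bab$ is the crux of the argument.
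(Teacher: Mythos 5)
Your proposal takes a genuinely different route from the paper (which simply performs a direct calculation of the degree-$0$ homology from the diagrams $D^0(2,3)$ and $D^2(2,3)$, using only the sheets and double curves since no generator of degree $0$ or $1$ involves a triple point), but as written it has two gaps, one of which is decisive. First, your reduction of $HR_0^{\varepsilon\delta}(F)$ to a ring presented on the double cosets $\langle\mu\rangle\backslash\pi_1(\mathbb{R}^4\setminus F)/\langle\mu\rangle$ assumes that the relevant equivalence on cords is genuine homotopy. The paper deliberately says the $0$-dimensional Roseman homology is computed by the cord ring using \emph{generic} near homotopy of cords: the generators $a_{11}(i,j)$ are indexed by pairs of sheets of a diagram and the relations come only from double curves, so the resulting classes are a priori finer than homotopy classes of cords in the complement. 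That generic near-homotopy classes inject into (equivalently, are classified by) double cosets of the meridian is exactly the kind of statement that requires proof (it is the analogue of Ng's nontrivial comparison between the diagrammatic cord algebra and the homotopy cord algebra for classical knots), and without it your entire $\pi_1$-based framework is not known to compute the invariant in the theorem.

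Second, and independently, the actual computation is deferred rather than carried out. Your group theory is fine: $\pi_1$ of the complement of $T^2(2,3)$ is $\mathbb{Z}/3\rtimes_{-1}\mathbb{Z}$ with meridian $t$, the double coset space has two elements, and $a\mapsto t$, $b\mapsto tx$ is a meridian-preserving surjection from the trefoil group. But the theorem's content is precisely the claim that on the spun side the infinitely many double cosets of $\langle a\rangle$ in the trefoil group collapse, under the skein relations, to the span of the trivial cord and one nontrivial generator satisfying the same relation as on the twist-spun side. You state this as what you ``would'' show and identify it as ``the crux of the argument,'' but you do not verify it, nor do you exhibit the single relation on the $T^2(2,3)$ side beyond saying it comes ``from the skein relation on the order-three fiber.'' Since injectivity of $\phi_*$ is exactly where the theorem could fail (a priori the spun cord ring could be strictly larger), the proposal is a plausible strategy outline rather than a proof. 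To complete it you would need either to carry out the double-coset reduction explicitly against $aba=bab$, or to abandon the $\pi_1$ route and compute both degree-$0$ homologies directly from the explicit diagrams, as the paper does.
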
 

In particular, the 0-dimensional Roseman homology does not distinguish 
$T^2(2, 3)$ from $T^0(2, 3)$. 
We notice that 
$HR_0^{\varepsilon \delta} (F)$ 
does not see triple points in the diagram of $F$.

\section{Definition}

A {\it surface}-{\it knot} $F$ is a closed connected oriented surface 
embedded locally flatly in ${\mathbb{R}}^4$. 
For a 
projection $\pi \colon {\mathbb{R}}^4 \to {\mathbb{R}}^3$, 
we may assume 
that 
$\pi |_F$ is {\it generic}, 
that is, $\pi |_F$ has double points, 
isolated triple points and isolated branch points in the image 
as its singularities. 
A {\it diagram} of $F$ 
is a generic projection $\pi (F)$ 
equipped with a height information 
with respect to $\pi$ 
that is indicated 
by removing regular neighborhoods 
of double points in the lower component. 
We refer to \cite{carter-saito} and \cite{carter-kamada-saito} for details. 
A diagram is regarded as a disjoint union of 
compact 
oriented surfaces, each of which is called a {\it sheet}. 
We indicate an orientation of a surface-knot on its diagram 
by assigning its normal direction ${\overrightarrow{n}}$, depicted by an arrow, 
to each sheet of the diagram so that 
the ordered triple 
$({\overrightarrow{v_1}}, {\overrightarrow{v_2}}, {\overrightarrow{n}})$ 
agrees with the orientation of ${\mathbb{R}}^3$, 
where the ordered pair 
$({\overrightarrow{v_1}}, {\overrightarrow{v_2}})$ denotes 
the orientation of the surface $F$. 
See Figure \ref{singularity}.

Roseman \cite{roseman} introduced 
seven types of moves on diagrams of surface-knots, 
called {\it Roseman moves}. 
Yashiro \cite{yashiro} showed that 
one of them 
is 
obtained from the others. 
Roseman's theorem, after Yashiro's modification, 
is stated as follows.

\begin{theorem} \label{roseman-yashiro} \cite{roseman}, \cite{yashiro} 
Let $F_1, F_2$ denote surface-knots in ${\mathbb{R}}^4$, and 
let $D_1, D_2$ denote diagrams of $F_1, F_2$ in ${\mathbb{R}}^3$, 
respectively. 
Then the followings are equivalent. \\ 
$(1)$ 
$F_1$ is ambient isotopic to $F_2$ in ${\mathbb{R}}^4$. \\ 
$(2)$ 
$D_2$ is obtained from $D_1$ by 
a finite sequence of Roseman moves of types I, III, IV, V, VI and VII, 
as illustrated in Figures \ref{roseman3}, \ref{roseman14}, 
\ref{roseman5}, \ref{roseman6}, \ref{roseman7-21}. 
\end{theorem}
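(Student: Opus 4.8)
The plan is to prove the two implications separately, with $(2) \Rightarrow (1)$ being routine and $(1) \Rightarrow (2)$ carrying the real content. For $(2) \Rightarrow (1)$, I would verify that each of the Roseman moves is realized by an ambient isotopy of ${\mathbb{R}}^4$ supported in a small ball. Because each move alters the diagram only inside a $3$-ball, it suffices to exhibit, for every move type, a local isotopy of the embedded surface lying over that ball which carries one local picture to the other; concatenating these gives an ambient isotopy from $F_1$ to $F_2$. Since this is a finite, move-by-move construction, I expect no essential difficulty here.

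For $(1) \Rightarrow (2)$ I would pass through singularity theory of the projection. Given an ambient isotopy $\{h_t\}_{t \in [0,1]}$ of ${\mathbb{R}}^4$ with $h_0 = \mathrm{id}$ and $h_1(F_1) = F_2$, set $F_t = h_t(F_1)$ and consider the trace map $G \colon F_1 \times [0,1] \to {\mathbb{R}}^3 \times [0,1]$ defined by $G(x, t) = (\pi(h_t(x)), t)$. By the multijet transversality theorem I would perturb $\{h_t\}$, fixing the endpoints so that the given generic diagrams $D_1, D_2$ are preserved, until $G$ is generic; then $\pi|_{F_t}$ is a generic surface projection for all but finitely many parameters $t_1 < \cdots < t_k$, and between consecutive critical parameters the diagram changes only by an ambient isotopy of ${\mathbb{R}}^3$, which leaves its combinatorial type unchanged.

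The heart of the argument is the classification of the codimension-one degeneracies that occur at the critical parameters $t_i$. For a map of a surface into ${\mathbb{R}}^3$ the $k$-fold points sweep out a locus of dimension $3 - k$, so a generic projection exhibits only double-point curves, isolated triple points, and isolated branch points, while quadruple points have codimension one and can appear only momentarily inside a family. I would enumerate the admissible codimension-one transitions — birth, death, and passage of branch points; creation, cancellation, and passage of triple points; tangencies of double-point curves; and the momentary quadruple (tetrahedral) point — and match each against the local models of the Roseman moves, recording the height data on the sheets so that every transition is identified with a specific move rather than merely with its shadow in ${\mathbb{R}}^3$. The main obstacle is establishing the \emph{completeness} of this list: one must argue, via a stratification of the relevant jet space and a transversality count, that no further codimension-one phenomenon can arise in a generic family, so that each change in the diagram is accounted for by some Roseman move.

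Finally, to obtain the statement in the reduced form with move II absent, I would invoke Yashiro's reduction \cite{yashiro}, which expresses move II as an explicit finite composition of moves I, III, IV, V, VI and VII. This last step is a direct diagrammatic computation and completes the equivalence.
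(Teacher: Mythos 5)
The paper does not prove this statement at all: it is quoted verbatim from the literature, with the two implications attributed to \cite{roseman} and the elimination of move II to \cite{yashiro}. Your outline --- realizing each move by a local ambient isotopy for $(2)\Rightarrow(1)$, and for $(1)\Rightarrow(2)$ making the trace of the isotopy generic and classifying the codimension-one degeneracies of the projection, then invoking Yashiro's reduction --- is precisely the standard argument carried out in those references, and the completeness of the codimension-one list, which you correctly identify as the crux, is exactly the content of Roseman's paper.
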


Let $D$ denote a diagram of a surface-knot in ${\mathbb{R}}^4$. 
We label sheets of $D$ by $1, \cdots, n$, 
connected components of double curves of $D$ 
by ${\bf 1}, \cdots, {\bf m}$, 
triple points of $D$ 
by ${\mathfrak{1}}, \cdots, {\mathfrak{t}}$, 
and 
branch points of positive sign of $D$ 
by ${\textsl{1}}, \cdots, {\textsl{b}}$. 

Let ${\bf x}$ denote a label on a double curve of $D$. 
See Figure \ref{singularity}. 
There are two kinds of sheets, an over-sheet and under-sheets, 
around the double curve with respect to the height. 
The over-sheet at the double curve 
is labeled by $o_{\bf x}$. 
The under-sheet at the double curve 
on the positive side of the over-sheet, 
that is, 
in the direction of the positive normal to the over-sheet, 
is labeled by $u^+_{\bf x}$, and 
the other under-sheet at the double curve 
is labeled by $u^-_{\bf x}$.

\begin{figure}
\begin{center}
\includegraphics{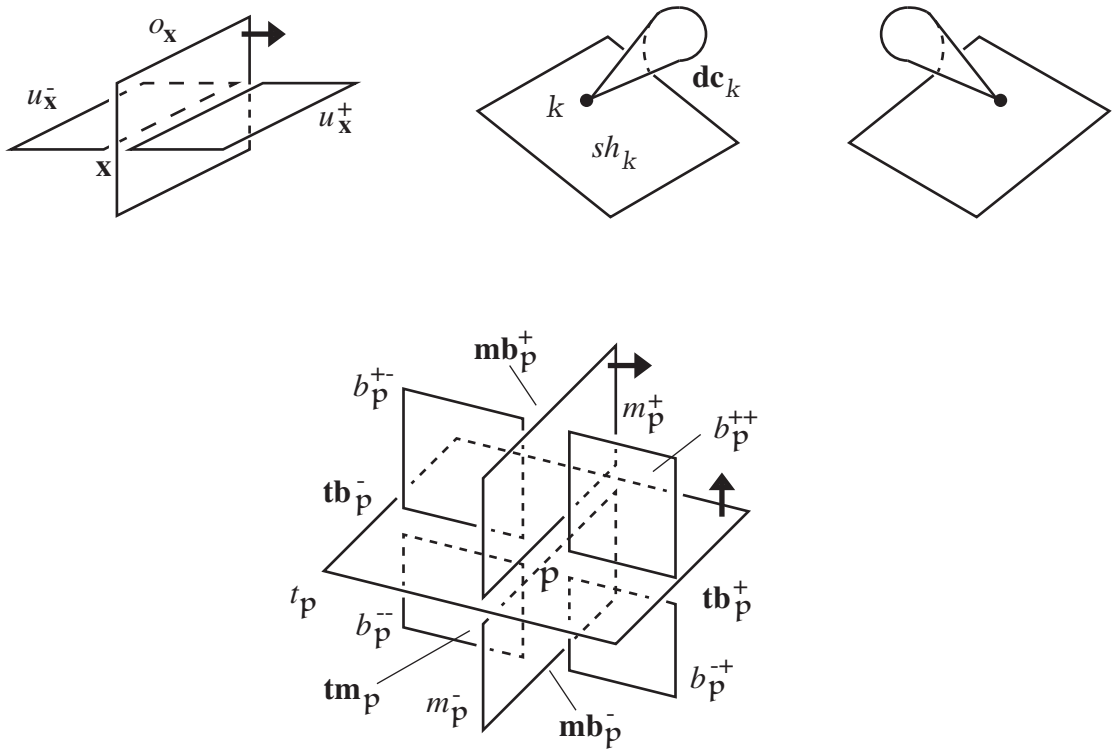}
\caption{diagram near double-curve (upper left), 
diagrams near branch point 
of positive sign (upper center) and of negative sign (upper right), and 
diagram near triple point (lower)}
\label{singularity}
\end{center}
\end{figure}

Let ${\mathfrak{p}}$ denote a label on a triple point of $D$. 
See Figure \ref{singularity}. 
There are three kinds of sheets, top, middle and bottom, 
around the triple point with respect to the height. 
The top sheet is labeled by $t_{\mathfrak{p}}$. 
The middle sheet on the positive (resp. negative) side 
of the top sheet 
is labeled by 
$m_{\mathfrak{p}}^+$ (resp. $m_{\mathfrak{p}}^-$). 
The bottom sheet on the $\alpha$-side of 
the top sheet, 
and 
on the $\beta$-side of the middle sheet 
is labeled by $b_{\mathfrak{p}}^{\alpha \beta}$, 
where $\alpha, \beta \in \{ +, - \}$. 
There are five double curves near the triple point. 
The double curve between the top sheet and the middle sheets is 
labeled by ${\bf tm}_{\mathfrak{p}}$. 
The double curve between the middle sheet and the bottom sheets 
on the $\alpha$-side of the top sheet is 
labeled by ${\bf mb}_{\mathfrak{p}}^\alpha$, 
where $\alpha \in \{ +, - \}$. 
The double curve between the top sheet and the bottom sheets 
on the $\beta$-side of the middle sheet 
is labeled by ${\bf tb}_{\mathfrak{p}}^\beta$, 
where $\beta \in \{ +, - \}$.

Let ${\textsl{k}}$ denote a label on a branch point of positive sign of $D$. 
See Figure \ref{singularity}. 
The double curve emanating from the branch point 
is labeled by ${\bf{dc}}_{\textsl{k}}$, 
and the sheet around the branch point 
is labeled by 
$sh_{\textsl{k}}$.

A unital graded algebra $CR^{\varepsilon \delta}(D)$ over ${\mathbb{Z}}$ 
$(\varepsilon, \delta \in \{ +, - \})$ is 
generated by the group ring ${\mathbb{Z}}[\mu, \mu^{-1}]$ 
in degree 0, 
along with the following generators: \\ 
$\{ a_{11}(i, j) \}$ in degree 0, \\ 
$\{ a_{21}({\bf x}, i) \}$, $\{ a_{12}(i, {\bf x}) \}$ 
in degree 1, \\ 
$\{ a_{22}({\bf x}, {\bf y}) \}$, $\{ a_{2}({\bf x}) \}$, 
$\{ a_{31}^{\varepsilon \delta} ({\mathfrak{p}}, i) \}$, 
$\{ a_{13}^{\varepsilon \delta} (i, {\mathfrak{p}}) \}$, 
$\{ a_{b1} ({\textsl{k}}, i) \}$, 
$\{ a_{1b} (i, {\textsl{k}}) \}$ 
in degree 2, \\ 
$\{ a_{32}^{\varepsilon \delta} ({\mathfrak{p}}, {\bf x}) \}$, 
$\{ a_{23}^{\varepsilon \delta} ({\bf x}, {\mathfrak{p}}) \}$, 
$\{ a_{b2} ({\textsl{k}}, {\bf x}) \}$, 
$\{ a_{2b} ({\bf x}, {\textsl{k}}) \}$, 
$\{ a_{3}^{\varepsilon \delta} ({\mathfrak{p}}) \}$, 
$\{ a_{b} ({\textsl{k}}) \}$ 
in degree 3, \\ 
$\{ a_{33}^{\varepsilon \delta} ({\mathfrak{p}}, {\mathfrak{q}}) \}$, 
$\{ a_{b3}^{\varepsilon \delta} ({\textsl{k}}, {\mathfrak{p}}) \}$, 
$\{ a_{3b}^{\varepsilon \delta} ({\mathfrak{p}}, {\textsl{k}}) \}$, 
$\{ a_{bb} ({\textsl{k}}, {\textsl{l}}) \}$ 
in degree 4, \\ 
where $i \neq j \in \{ 1, \cdots, n \}$, 
${\bf x}, {\bf y} \in \{ {\bf 1}, \cdots, {\bf m} \}$, 
${\mathfrak{p}}, {\mathfrak{q}} \in \{ {\mathfrak{1}}, \cdots, {\mathfrak{t}} \}, 
{\textsl{k}}, {\textsl{l}} \in \{ {\textsl{1}}, \cdots, {\textsl{b}} \}$. 
We set $a_{11}(i, i) = 1+\mu$ for $i \in \{ 1, \cdots, n \}$. 
We suppose that 
the above generators do not commute with each other 
in $CR^{\varepsilon \delta}(D)$, 
and that $\mu$ and $\mu^{-1}$ commute with all generators 
in $CR^{\varepsilon \delta}(D)$.

A differential $\partial$ on generators of $CR^{--}(D)$ is defined as follows: \\ 
$\partial a_{21}({\bf x}, i)$ 
$= \mu a_{11}(u^-_{\bf x}, i) 
+ a_{11}(u^+_{\bf x}, i) 
- a_{11}(u^-_{\bf x}, o_{\bf x}) 
a_{11}(o_{\bf x}, i)$, \\ 
$\partial a_{12}(i, {\bf x})$ 
$= - a_{11}(i, u^-_{\bf x}) 
- \mu a_{11}(i, u^+_{\bf x}) 
+ a_{11}(i, o_{\bf x}) 
a_{11}(o_{\bf x}, u^-_{\bf x})$, \\ 
$\partial a_{22}({\bf x}, {\bf y})$ 
$= \mu a_{12}(u^-_{\bf x}, {\bf y}) 
+ a_{12}(u^+_{\bf x}, {\bf y}) 
- a_{11}(u^-_{\bf x}, o_{\bf x}) 
a_{12}(o_{\bf x}, {\bf y})$ \\ 
$+ a_{21}({\bf x}, u^-_{\bf y}) 
+ \mu a_{21}({\bf x}, u^+_{\bf y}) 
- a_{21}({\bf x}, o_{\bf y}) 
a_{11}(o_{\bf y}, u^-_{\bf y})$, \\ 
$\partial a_{2}({\bf x})$ 
$= \mu a_{21}({\bf x}, u^+_{\bf x}) 
+ \mu a_{12}(u^-_{\bf x}, {\bf x}) 
- a_{11}(u^-_{\bf x}, o_{\bf x}) 
a_{12}(o_{\bf x}, {\bf x})$, \\ 
$\partial a_{31}^{--}({\mathfrak{p}}, i)$ 
$= \mu a_{21}({\bf tb}^-_{\mathfrak{p}}, i)$ 
$+ a_{21}({\bf tb}^+_{\mathfrak{p}}, i)$
$- a_{21}({\bf tb}^-_{\mathfrak{p}}, m^+_{\mathfrak{p}}) 
a_{11}(m^+_{\mathfrak{p}}, i)$ \\ 
$- \mu a_{21}({\bf mb}^-_{\mathfrak{p}}, i)$ 
$- a_{21}({\bf mb}^+_{\mathfrak{p}}, i)$
$+ a_{21}({\bf mb}^-_{\mathfrak{p}}, t_{\mathfrak{p}}) 
a_{11}(t_{\mathfrak{p}}, i)$ \\ 
$- a_{11}(b^{--}_{\mathfrak{p}}, m^-_{\mathfrak{p}}) 
a_{21}({\bf tm}_{\mathfrak{p}}, i)$
$- a_{12}(b^{--}_{\mathfrak{p}}, {\bf tm}_{\mathfrak{p}}) 
a_{11}(m^+_{\mathfrak{p}}, i)$ 
$+ \mu^{-1} a_{11}(b^{--}_{\mathfrak{p}}, t_{\mathfrak{p}}) 
a_{12}(t_{\mathfrak{p}}, {\bf tm}_{\mathfrak{p}}) 
a_{11}(m^+_{\mathfrak{p}}, i)$, \\ 
$\partial a_{13}^{--}(i, {\mathfrak{p}})$ 
$= a_{12}(i, {\bf tb}^-_{\mathfrak{p}})$ 
$+ \mu a_{12}(i, {\bf tb}^+_{\mathfrak{p}})$ 
$- a_{11}(i, m^+_{\mathfrak{p}}) 
a_{12}(m^+_{\mathfrak{p}}, {\bf tb}^-_{\mathfrak{p}})$ \\ 
$- a_{12}(i, {\bf mb}^-_{\mathfrak{p}})$ 
$- \mu a_{12}(i, {\bf mb}^+_{\mathfrak{p}})$ 
$+ a_{11}(i, t_{\mathfrak{p}}) 
a_{12}(t_{\mathfrak{p}}, {\bf mb}^-_{\mathfrak{p}})$ \\ 
$- a_{12}(i, {\bf tm}_{\mathfrak{p}}) 
a_{11}(m^-_{\mathfrak{p}}, b^{--}_{\mathfrak{p}})$ 
$- a_{11}(i, m^+_{\mathfrak{p}}) 
a_{21}({\bf tm}_{\mathfrak{p}}, b^{--}_{\mathfrak{p}})$ 
$+ a_{11}(i, m^+_{\mathfrak{p}}) 
a_{21}({\bf tm}_{\mathfrak{p}}, t_{\mathfrak{p}}) 
a_{11}(t_{\mathfrak{p}}, b^{--}_{\mathfrak{p}})$, \\ 
$\partial a_{32}^{--}({\mathfrak{p}}, {\bf x})$ 
$= \mu a_{22}({\bf tb}^-_{\mathfrak{p}}, {\bf x}) 
+ a_{22}({\bf tb}^+_{\mathfrak{p}}, {\bf x}) 
- a_{21}({\bf tb}^-_{\mathfrak{p}}, m^+_{\mathfrak{p}}) 
a_{12}(m^+_{\mathfrak{p}}, {\bf x})$ \\ 
$- \mu a_{22}({\bf mb}^-_{\mathfrak{p}}, {\bf x}) 
- a_{22}({\bf mb}^+_{\mathfrak{p}}, {\bf x}) 
+ a_{21}({\bf mb}^-_{\mathfrak{p}}, t_{\mathfrak{p}}) 
a_{12}(t_{\mathfrak{p}}, {\bf x})$ \\ 
$- a_{11}(b^{--}_{\mathfrak{p}}, m^-_{\mathfrak{p}}) 
a_{22}({\bf tm}_{\mathfrak{p}}, {\bf x}) 
- a_{12}(b^{--}_{\mathfrak{p}}, {\bf tm}_{\mathfrak{p}}) 
a_{12}(m^+_{\mathfrak{p}}, {\bf x}) 
+ \mu^{-1} a_{11}(b^{--}_{\mathfrak{p}}, t_{\mathfrak{p}}) 
a_{12}(t_{\mathfrak{p}}, {\bf tm}_{\mathfrak{p}}) 
a_{12}(m^+_{\mathfrak{p}}, {\bf x})$ \\ 
$- a_{31}^{--}({\mathfrak{p}}, u^-_{\bf x}) 
- \mu a_{31}^{--}({\mathfrak{p}}, u^+_{\bf x}) 
+ a_{31}^{--}({\mathfrak{p}}, o_{\bf x}) 
a_{11}(o_{\bf x}, u^-_{\bf x})$, \\ 
$\partial a_{23}^{--}({\bf x}, {\mathfrak{p}})$ 
$= \mu a_{13}^{--}(u^-_{\bf x}, {\mathfrak{p}}) 
+ a_{13}^{--}(u^+_{\bf x}, {\mathfrak{p}}) 
- a_{11}(u^-_{\bf x}, o_{\bf x}) 
a_{13}^{--}(o_{\bf x}, {\mathfrak{p}})$ \\ 
$- a_{22}({\bf x}, {\bf tb}^-_{\mathfrak{p}}) 
- \mu a_{22}({\bf x}, {\bf tb}^+_{\mathfrak{p}}) 
+ a_{21}({\bf x}, m^+_{\mathfrak{p}}) 
a_{12}(m^+_{\mathfrak{p}}, {\bf tb}^-_{\mathfrak{p}})$ \\ 
$+ a_{22}({\bf x}, {\bf mb}^-_{\mathfrak{p}}) 
+ \mu a_{22}({\bf x}, {\bf mb}^+_{\mathfrak{p}}) 
- a_{21}({\bf x}, t_{\mathfrak{p}}) 
a_{12}(t_{\mathfrak{p}}, {\bf mb}^-_{\mathfrak{p}})$ \\ 
$+ a_{22}({\bf x}, {\bf tm}_{\mathfrak{p}}) 
a_{11}(m^-_{\mathfrak{p}}, b^{--}_{\mathfrak{p}}) 
+ a_{21}({\bf x}, m^+_{\mathfrak{p}}) 
a_{21}({\bf tm}_{\mathfrak{p}}, b^{--}_{\mathfrak{p}}) 
- a_{21}({\bf x}, m^+_{\mathfrak{p}}) 
a_{21}({\bf tm}_{\mathfrak{p}}, t_{\mathfrak{p}}) 
a_{11}(t_{\mathfrak{p}}, b^{--}_{\mathfrak{p}})$, \\ 
$\partial a_{3}^{--}({\mathfrak{p}})$ 
$= \mu a_{31}^{--}({\mathfrak{p}}, b^{++}_{\mathfrak{p}})$ 
$- \mu a_{13}^{--}(b^{--}_{\mathfrak{p}}, {\mathfrak{p}})$ 
$+ a_{11}(b^{--}_{\mathfrak{p}}, m^-_{\mathfrak{p}}) 
a_{13}^{--}(m^-_{\mathfrak{p}}, {\mathfrak{p}})$ \\ 
$+ (a_{11}(b^{--}_{\mathfrak{p}}, t_{\mathfrak{p}}) 
- \mu^{-1} a_{11}(b^{--}_{\mathfrak{p}}, m^-_{\mathfrak{p}}) 
a_{11}(m^-_{\mathfrak{p}}, t_{\mathfrak{p}})) 
a_{13}^{--}(t_{\mathfrak{p}}, {\mathfrak{p}})$ \\ 
$+ a_{2}({\bf tb}^-_{\mathfrak{p}})$ 
$+ a_{2}({\bf mb}^+_{\mathfrak{p}})$ 
$+ \mu^{-1} a_{11}(b^{--}_{\mathfrak{p}}, m^-_{\mathfrak{p}}) 
a_{2}({\bf tm}_{\mathfrak{p}}) 
a_{11}(m^-_{\mathfrak{p}}, b^{--}_{\mathfrak{p}})$ 
$- a_{2}({\bf tb}^+_{\mathfrak{p}})$ 
$- a_{2}({\bf mb}^-_{\mathfrak{p}})$ \\ 
$- \mu a_{22}({\bf tb}^-_{\mathfrak{p}}, {\bf mb}^+_{\mathfrak{p}})$ 
$+ a_{11}(b^{--}_{\mathfrak{p}}, m^-_{\mathfrak{p}}) 
a_{22}({\bf tm}_{\mathfrak{p}}, {\bf mb}^+_{\mathfrak{p}})$ 
$- \mu^{-1} a_{11}(b^{--}_{\mathfrak{p}}, m^-_{\mathfrak{p}}) 
a_{22}({\bf tm}_{\mathfrak{p}}, {\bf tb}^-_{\mathfrak{p}})$ \\ 
$+ \mu a_{22}({\bf mb}^-_{\mathfrak{p}}, {\bf tb}^+_{\mathfrak{p}})$ 
$- a_{21}({\bf mb}^-_{\mathfrak{p}}, t_{\mathfrak{p}}) 
a_{12}(t_{\mathfrak{p}}, {\bf tb}^+_{\mathfrak{p}})$ \\ 
$+ (\mu^{-1} a_{11}(b^{--}_{\mathfrak{p}}, m^-_{\mathfrak{p}}) 
a_{21}({\bf tm}_{\mathfrak{p}}, m^+_{\mathfrak{p}}) 
+ a_{12}(b^{--}_{\mathfrak{p}}, {\bf tm}_{\mathfrak{p}}) 
- \mu^{-1} a_{11}(b^{--}_{\mathfrak{p}}, t_{\mathfrak{p}}) 
a_{12}(t_{\mathfrak{p}}, {\bf tm}_{\mathfrak{p}}))$ \\ 
$(a_{12}(m^+_{\mathfrak{p}}, {\bf tb}^-_{\mathfrak{p}}) 
+ a_{21}({\bf tm}_{\mathfrak{p}}, b^{--}_{\mathfrak{p}}) 
- a_{21}({\bf tm}_{\mathfrak{p}}, t_{\mathfrak{p}}) 
a_{11}(t_{\mathfrak{p}}, b^{--}_{\mathfrak{p}}))$ \\ 
$+ (a_{21}({\bf tb}^-_{\mathfrak{p}}, m^+_{\mathfrak{p}}) 
+ a_{12}(b^{--}_{\mathfrak{p}}, {\bf tm}_{\mathfrak{p}}) 
- \mu^{-1} a_{11}(b^{--}_{\mathfrak{p}}, t_{\mathfrak{p}}) 
a_{12}(t_{\mathfrak{p}}, {\bf tm}_{\mathfrak{p}})) 
a_{12}(m^+_{\mathfrak{p}}, {\bf mb}^+_{\mathfrak{p}})$, \\ 
$\partial a_{33}^{--}({\mathfrak{p}}, {\mathfrak{q}})$ 
$= \mu a_{23}^{--}({\bf tb}^-_{\mathfrak{p}}, {\mathfrak{q}}) 
+ a_{23}^{--}({\bf tb}^+_{\mathfrak{p}}, {\mathfrak{q}}) 
- a_{21}({\bf tb}^-_{\mathfrak{p}}, m^+_{\mathfrak{p}}) 
a_{13}^{--}(m^+_{\mathfrak{p}}, {\mathfrak{q}})$ \\ 
$- \mu a_{23}^{--}({\bf mb}^-_{\mathfrak{p}}, {\mathfrak{q}}) 
- a_{23}^{--}({\bf mb}^+_{\mathfrak{p}}, {\mathfrak{q}}) 
+ a_{21}({\bf mb}^-_{\mathfrak{p}}, t_{\mathfrak{p}}) 
a_{13}^{--}(t_{\mathfrak{p}}, {\mathfrak{q}})$ \\ 
$- a_{11}(b^{--}_{\mathfrak{p}}, m^-_{\mathfrak{p}}) 
a_{23}^{--}({\bf tm}_{\mathfrak{p}}, {\mathfrak{q}}) 
- a_{12}(b^{--}_{\mathfrak{p}}, {\bf tm}_{\mathfrak{p}}) 
a_{13}^{--}(m^+_{\mathfrak{p}}, {\mathfrak{q}})$ 
$+ \mu^{-1} a_{11}(b^{--}_{\mathfrak{p}}, t_{\mathfrak{p}}) 
a_{12}(t_{\mathfrak{p}}, {\bf tm}_{\mathfrak{p}}) 
a_{13}^{--}(m^+_{\mathfrak{p}}, {\mathfrak{q}})$ \\ 
$+ a_{32}^{--}({\mathfrak{p}}, {\bf tb}^-_{\mathfrak{q}}) 
+ \mu a_{32}^{--}({\mathfrak{p}}, {\bf tb}^+_{\mathfrak{q}}) 
- a_{31}^{--}({\mathfrak{p}}, m^+_{\mathfrak{q}}) 
a_{12}(m^+_{\mathfrak{q}}, {\bf tb}^-_{\mathfrak{q}})$ \\ 
$- a_{32}^{--}({\mathfrak{p}}, {\bf mb}^-_{\mathfrak{q}}) 
- \mu a_{32}^{--}({\mathfrak{p}}, {\bf mb}^+_{\mathfrak{q}}) 
+ a_{31}^{--}({\mathfrak{p}}, t_{\mathfrak{q}}) 
a_{12}(t_{\mathfrak{q}}, {\bf mb}^-_{\mathfrak{q}})$ \\ 
$- a_{32}^{--}({\mathfrak{p}}, {\bf tm}_{\mathfrak{q}}) 
a_{11}(m^-_{\mathfrak{q}}, b^{--}_{\mathfrak{q}}) 
- a_{31}^{--}({\mathfrak{p}}, m^+_{\mathfrak{q}}) 
a_{21}({\bf tm}_{\mathfrak{q}}, b^{--}_{\mathfrak{q}})$ 
$+ a_{31}^{--}({\mathfrak{p}}, m^+_{\mathfrak{q}}) 
a_{21}({\bf tm}_{\mathfrak{q}}, t_{\mathfrak{q}}) 
a_{11}(t_{\mathfrak{q}}, b^{--}_{\mathfrak{q}})$, \\ 
$\partial a_{b1}({\textsl{k}}, i)$ 
$= a_{21}({\bf dc}_{\textsl{k}}, i)$,  \\ 
$\partial a_{1b}(i, {\textsl{k}})$ 
$= a_{12}(i, {\bf dc}_{\textsl{k}})$, \\ 
$\partial a_{b2}({\textsl{k}}, {\bf x})$ 
$= a_{22}({\bf dc}_{\textsl{k}}, {\bf x}) 
- a_{b1}^{--}({\textsl{k}}, u^-_{\bf x}) 
- \mu a_{b1}^{--}({\textsl{k}}, u^+_{\bf x}) 
+ a_{b1}^{--}({\textsl{k}}, o_{\bf x}) 
a_{11}(o_{\bf x}, u^-_{\bf x})$, \\  
$\partial a_{2b}({\bf x}, {\textsl{k}})$ 
$= \mu a_{1b}^{--}(u^-_{\bf x}, {\textsl{k}}) 
+ a_{1b}^{--}(u^+_{\bf x}, {\textsl{k}}) 
- a_{11}(u^-_{\bf x}, o_{\bf x}) 
a_{1b}^{--}(o_{\bf x}, {\textsl{k}}) 
- a_{22}({\bf x}, {\bf dc}_{\textsl{k}})$, \\  
$\partial a_{b2}({\textsl{k}}, {\bf dc}_{\textsl{k}})$ 
$= a_{22}({\bf dc}_{\textsl{k}}, {\bf dc}_{\textsl{k}})$, \\ 
$\partial a_{2b}({\bf dc}_{\textsl{k}}, {\textsl{k}})$ 
$= - a_{22}({\bf dc}_{\textsl{k}}, {\bf dc}_{\textsl{k}})$, \\  
$\partial a_{b3}^{--}({\textsl{k}}, {\mathfrak{p}})$ 
$= a_{23}^{--}({\bf dc}_{\textsl{k}}, {\mathfrak{p}}) 
+ a_{b2}({\textsl{k}}, {\bf tb}^-_{\mathfrak{p}})$ 
$+ \mu a_{b2}({\textsl{k}}, {\bf tb}^+_{\mathfrak{p}})$ 
$- a_{b1}({\textsl{k}}, m^+_{\mathfrak{p}}) 
a_{12}(m^+_{\mathfrak{p}}, {\bf tb}^-_{\mathfrak{p}})$ \\ 
$- a_{b2}({\textsl{k}}, {\bf mb}^-_{\mathfrak{p}})$ 
$- \mu a_{b2}({\textsl{k}}, {\bf mb}^+_{\mathfrak{p}})$ 
$+ a_{b1}({\textsl{k}}, t_{\mathfrak{p}}) 
a_{12}(t_{\mathfrak{p}}, {\bf mb}^-_{\mathfrak{p}})$ \\ 
$- a_{b2}({\textsl{k}}, {\bf tm}_{\mathfrak{p}}) 
a_{11}(m^-_{\mathfrak{p}}, b^{--}_{\mathfrak{p}})$ 
$- a_{b1}({\textsl{k}}, m^+_{\mathfrak{p}}) 
a_{21}({\bf tm}_{\mathfrak{p}}, b^{--}_{\mathfrak{p}})$ 
$+ a_{b1}({\textsl{k}}, m^+_{\mathfrak{p}}) 
a_{21}({\bf tm}_{\mathfrak{p}}, t_{\mathfrak{p}}) 
a_{11}(t_{\mathfrak{p}}, b^{--}_{\mathfrak{p}})$, \\  
$\partial a_{3b}^{--}({\mathfrak{p}}, {\textsl{k}})$ 
$= \mu a_{2b}({\bf tb}^-_{\mathfrak{p}}, {\textsl{k}})$ 
$+ a_{2b}({\bf tb}^+_{\mathfrak{p}}, {\textsl{k}})$
$- a_{21}({\bf tb}^-_{\mathfrak{p}}, m^+_{\mathfrak{p}}) 
a_{1b}(m^+_{\mathfrak{p}}, {\textsl{k}})$ \\ 
$- \mu a_{2b}({\bf mb}^-_{\mathfrak{p}}, {\textsl{k}})$ 
$- a_{2b}({\bf mb}^+_{\mathfrak{p}}, {\textsl{k}})$
$+ a_{21}({\bf mb}^-_{\mathfrak{p}}, t_{\mathfrak{p}}) 
a_{1b}(t_{\mathfrak{p}}, {\textsl{k}})$ \\ 
$- a_{11}(b^{--}_{\mathfrak{p}}, m^-_{\mathfrak{p}}) 
a_{2b}({\bf tm}_{\mathfrak{p}}, {\textsl{k}})$
$- a_{12}(b^{--}_{\mathfrak{p}}, {\bf tm}_{\mathfrak{p}}) 
a_{1b}(m^+_{\mathfrak{p}}, {\textsl{k}})$ 
$+ \mu^{-1} a_{11}(b^{--}_{\mathfrak{p}}, t_{\mathfrak{p}}) 
a_{12}(t_{\mathfrak{p}}, {\bf tm}_{\mathfrak{p}}) 
a_{1b}(m^+_{\mathfrak{p}}, {\textsl{k}})$ \\ 
$+ a_{32}^{--}({\mathfrak{p}}, {\bf dc}_{\textsl{k}})$, \\  
$\partial a_{bb}({\textsl{k}}, {\textsl{l}})$ 
$= a_{2b}({\bf dc}_{\textsl{k}}, {\textsl{l}})$ 
$+ a_{b2}({\textsl{k}}, {\bf dc}_{\textsl{l}})$, \\ 
$\partial a_{b}({\textsl{k}})$ 
$= a_{2}({\bf dc}_{\textsl{k}})$ 
$- \mu a_{b1}({\textsl{k}}, sh_{\textsl{k}})$ 
$+ a_{1b}(sh_{\textsl{k}}, {\textsl{k}})$, \\ 
where $i \in \{ 1, \cdots, n \}, 
{\bf x}, {\bf y} \in \{ {\bf 1}, \cdots, {\bf m} \}, 
{\mathfrak{p}}, {\mathfrak{q}} \in \{ {\mathfrak{1}}, \cdots, {\mathfrak{t}} \}, 
{\textsl{k}}, {\textsl{l}} \in \{ {\textsl{1}}, \cdots, {\textsl{b}} \}$. 
We extend 
the differential $\partial$ 
by linearity over ${\mathbb{Z}}$, 
and the signed Leibniz rule: 
$\partial (v w) = (\partial v) w + (-1)^{\rm{deg} {\it v}} v (\partial w)$. 
It is straightforward to see that the equation $\partial \circ \partial = 0$ 
holds on generators of $CR^{--}(D)$. 
In $\S$10, we define differentials 
on generators 
of $CR^{-+}(D)$, $CR^{+-}(D)$ and $CR^{++}(D)$.

An algebra map between differential graded algebras 
$\phi \colon 
({\mathbb{Z}} \langle a_1^1, \cdots, a_n^1 \rangle, \partial^1) 
\to 
({\mathbb{Z}} \langle a_1^2, \cdots, a_n^2 \rangle, \partial^2)$ 
is an {\it elementary isomorphism} 
if the followings are satisfied: \\ 
$(1)$ $\phi$ is a graded chain map, \\ 
$(2)$ $\phi(a_i^1) = \alpha a_i^2 + v$ 
for some $i \in \{ 1, \cdots, n \}$, 
where $\alpha, v \in {\mathbb{Z}} \langle a_1^2, \cdots, a_n^2 \rangle$, and 
$\alpha$ is a unit, \\ 
$(3)$ 
$\phi(a_j^1) = a_j^2$ 
for $j \neq i$. \\ 
A {\it tame isomorphism} is a composition of elementary isomorphisms. 
Let $(E^i, \partial^i)$ be the tensor algebra 
on two generators $e_1^i, e_2^i$ with 
${\rm deg} (e_1^i) - 1 = {\rm deg} (e_2^i) = i$ 
such that 
the differential is induced by $\partial^i e_1^i = e_2^i$, $\partial^i e_2^i = 0$. 
The degree-$i$ {\it algebraic stabilization} of a differential graded algebra 
$(A, \partial)$ is the coproduct of $A$ with 
$E^i$, 
with the differential induced from $\partial$ and $\partial^i$. 
The inverse operation of the degree-$i$ algebraic stabilization 
is a degree-$i$ algebraic {\it destabilization}. 
Two differential graded algebras $(A_1, \partial_1)$ and $(A_2, \partial_2)$ 
are {\it stably tame isomorphic} 
if they are tame isomorphic 
after some number of algebraic stabilizations and destabilizations of 
$(A_1, \partial_1)$ and $(A_2, \partial_2)$.

Let $(A, \partial)$ be a differential graded algebra, 
where $A = {\mathbb{Z}} \langle a_1, \cdots, a_n \rangle$, 
and let $I$ denote a two-sided ideal in $A$ 
generated by $\{ \partial a_i \ | \  i = 1, \cdots, n \}$. 
A {\it characteristic algebra} ${\mathcal{C}}(A)$ of $(A, \partial)$ 
is defined to be the algebra $A/I$. 
See \cite{ng-computable}. 
Two characteristic algebras $A_1/I_1$ and $A_2/I_2$ are 
{\it tamely isomorphic} 
if we can add some number of generators to $A_1$ and 
the same number of generators to $I_1$, 
and similarly for $A_2$ and $I_2$, 
so that there is a tame isomorphism 
between $(A_1, \partial)$ and $(A_2, \partial)$ 
sending $I_1$ to $I_2$. 
In particular, tamely isomorphic characteristic algebras are 
isomorphic as algebras. 
A stabilization of $(A, \partial)$ adds two generators $e_1, e_2$ to $A$ 
and one generator $e_2$ to $I$; 
therefore $A/I$ changes by adding one generator $e_1$ and no relations. 
Two characteristic algebras $A_1/I_1$ and $A_2/I_2$ are 
{\it equivalent} if 
they are tamely isomorphic, 
after adding a possibly different finite number of generators 
to $A_1$ and $A_2$ 
but no additional relations to $I_1$ and $I_2$. 
It follows that if two differential graded algebras are 
stably tame isomorphic, 
then their characteristic algebras are equivalent.

\section{Roseman move III}

\begin{proposition} \label{roseman-3} 
Suppose that a diagram $D_2$ of a surface-knot $F$ 
is obtained from a diagram $D_1$ of $F$ 
by applying one Roseman move of type III. 
See Figures \ref{roseman3}. 
Then the differential graded algebra $(CR^{--} (D_2), \partial)$ 
is stably tame isomorphic to $(CR^{--} (D_1), \partial)$. 
\end{proposition}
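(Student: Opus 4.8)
The plan is to exhibit an explicit chain of elementary isomorphisms together with algebraic stabilizations and destabilizations carrying $(CR^{--}(D_1),\partial)$ to $(CR^{--}(D_2),\partial)$, following the scheme that Ng uses to prove Reidemeister invariance in the classical case. The first step is bookkeeping. A type III Roseman move is supported in a ball, so outside that ball the two diagrams agree, and there is a canonical bijection between those generators of $CR^{--}(D_1)$ and of $CR^{--}(D_2)$ that are indexed by sheets, double curves, triple points, and positive branch points lying outside the move region. I would read off from Figure \ref{roseman3} exactly which singularities are created or destroyed inside the region --- the new double curves $\mathbf{x}$, the new triple points $\mathfrak{p}$, and the sheets bounding them --- and thereby isolate the finite collection of generators that distinguish the two algebras: the $a_{21}(\mathbf{x},i)$ and $a_{12}(i,\mathbf{x})$ on each new double curve, together with the triple-point generators $a_{31}^{--},a_{13}^{--},a_{32}^{--},a_{23}^{--},a_{33}^{--},a_{3}^{--}$ attached to any new triple point.

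The second step is the algebraic heart. I would first stabilize $(CR^{--}(D_1),\partial)$ in the degrees matching the new generators, introducing formal cancelling pairs $(e_1^i,e_2^i)$ with $\partial e_1^i=e_2^i$, so that the two algebras acquire the same generating set. Then I would construct a tame isomorphism $\phi$ that is the identity on all generators coming from outside the move region and that sends each affected generator of $D_2$ to the appropriate combination of old generators and stabilization variables. The guiding principle is that each generator supported on a newly created singularity has a differential of the form $\partial(\text{new})=(\text{generator})+(\text{decomposable correction})$; this is already visible in $\partial a_{21}(\mathbf{x},i)=\mu a_{11}(u^-_{\mathbf{x}},i)+a_{11}(u^+_{\mathbf{x}},i)-a_{11}(u^-_{\mathbf{x}},o_{\mathbf{x}})a_{11}(o_{\mathbf{x}},i)$, where the linear leading terms let one cancel a generator against its differential. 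Working upward by degree --- first the degree-$0$ generators $a_{11}$ attached to sheets, then the degree-$1$ double-curve generators, and finally the degree-$2$ and degree-$3$ generators --- one arranges that the new part of the complex splits off as a sequence of acyclic cancelling pairs, which then destabilize away.

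The verification that $\phi\circ\partial=\partial\circ\phi$ on every affected generator is where the real work lies, and I expect the highest-degree generators touched by the move to be the main obstacle. For instance, if the move alters the triple-point set, checking the identity on $a_{33}^{--}(\mathfrak{p},\mathfrak{q})$ or on $a_{3}^{--}(\mathfrak{p})$ requires expanding the long, $\mu$-weighted differential formulas and tracking every sign from the Leibniz rule $\partial(vw)=(\partial v)w+(-1)^{\deg v}v(\partial w)$ and every power of $\mu$ through the substitution; the terms proportional to $a_{11}(b^{--}_{\mathfrak{p}},t_{\mathfrak{p}})$ and the $\mu^{-1}$-corrections are the ones most likely to obstruct a naive identification and must be absorbed by choosing the correction terms in $\phi$ correctly. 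I would therefore defer these to the end, after the degree-$0$, degree-$1$, and degree-$2$ identities have pinned down the values of $\phi$ on the lower generators, so that by the time the top relations are checked every quantity appearing in them is already determined. Once $\phi$ is confirmed to be a tame isomorphism after the stabilizations, the composite of these stabilizations, of $\phi$, and of the inverse destabilizations is the desired stable tame isomorphism. As a consistency check one may verify that the induced characteristic algebras are equivalent, which by the remark at the end of $\S 2$ is a necessary consequence.
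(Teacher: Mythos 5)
Your overall strategy --- isolate the finitely many generators supported in the move region, change variables by tame isomorphisms so that each such generator enters a cancelling pair $\partial e_1=e_2$, and then stabilize/destabilize those pairs away --- is exactly the strategy of the paper's proof. The only cosmetic difference is direction: the paper destabilizes $(CR^{--}(D_2),\partial)$ down to $(CR^{--}(D_1),\partial)$ rather than stabilizing $D_1$ up to $D_2$, which is immaterial for stable tame isomorphism.

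The genuine gap is that you never determine what a type III move actually creates, and this cannot be deferred, because the entire cancellation scheme is organized around it. A type III move produces exactly one new closed double curve ${\bf 0}$ and one new sheet $0$ (the piece of the under-sheet enclosed by ${\bf 0}$); it creates \emph{no} new triple points and no branch points. Your hedging about ``the new triple points'' and your expectation that the hard verifications occur on $a_{33}^{--}(\mathfrak{p},\mathfrak{q})$ and $a_{3}^{--}(\mathfrak{p})$ are therefore off target: those generators are indexed only by triple points, all of which lie outside the move region, so their differentials are literally unchanged. The new generators are precisely those having an argument equal to $0$ or ${\bf 0}$, and the real content of the proof is the explicit pairing and the order of elimination. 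One must first cancel $(a_{21}({\bf 0},i),a_{11}(0,i))$ and $(a_{12}(i,{\bf 0}),a_{11}(i,0))$ after the substitution $a_{11}(0,i)\mapsto a_{11}(0,i)-\mu a_{11}(2,i)+a_{11}(2,1)a_{11}(1,i)$ and its mirror, which is where the specific form of $\partial a_{21}({\bf 0},i)$ enters; only after these are gone do $\partial a_{2}({\bf 0})$ and $\partial a_{22}({\bf 0},{\bf 0})$ become linear, permitting the cancellations $(a_{2}({\bf 0}),a_{21}({\bf 0},0))$ and $(a_{22}({\bf 0},{\bf 0}),a_{12}(0,{\bf 0}))$; and then one repeats the pattern for $(a_{22}({\bf x},{\bf 0}),a_{21}({\bf x},0))$, $(a_{22}({\bf 0},{\bf x}),a_{12}(0,{\bf x}))$, $(a_{32}^{--}(\mathfrak{p},{\bf 0}),a_{31}^{--}(\mathfrak{p},0))$, $(a_{23}^{--}({\bf 0},\mathfrak{p}),a_{13}^{--}(0,\mathfrak{p}))$, and the branch-point analogues, each preceded by its own change of variables. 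Your sketch gestures at ``working upward by degree'' but does not exhibit this pairing, and without it there is no argument that every new generator is accounted for. Finally, the paper must (and does) split into two cases according to whether the positive normal of the over-sheet points toward or against the under-sheet; your ball-support bookkeeping should at least record that both configurations occur and that the same scheme applies to each.
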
 

\begin{proof} 
There are two cases to study: 
the positive normal to the over-sheet 
either points 
against the under-sheet, 
as illustrated in the upper pair of Figure \ref{roseman3}, 
or 
points 
toward the under-sheet, 
as illustrated in lower pair of Figure \ref{roseman3}, 
when the sheets are disjoint. 
In the following, we study the case of upper pair of Figure \ref{roseman3}. 
Similar arguments as below prove the case of lower pair of 
Figure \ref{roseman3}.

\begin{figure}
\begin{center}
\includegraphics{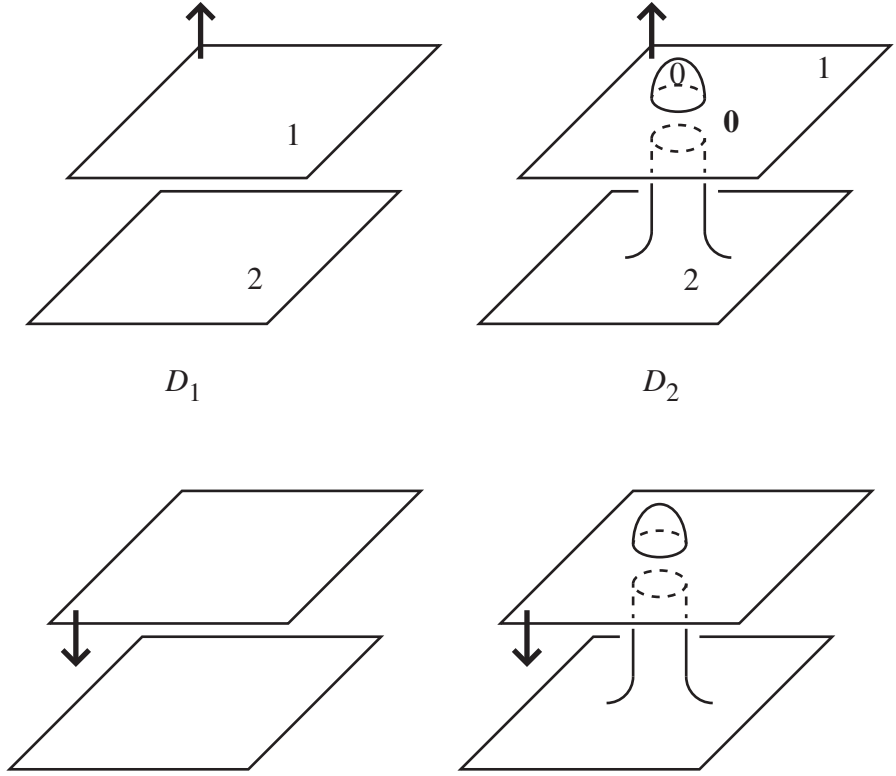}
\caption{Roseman move of type III}
\label{roseman3}
\end{center}
\end{figure}

We label sheets of $D_1$ by $1, \dots, n$, 
double curves of $D_1$ by ${\bf 1}, \cdots, {\bf m}$, 
triple points of $D_1$ by ${\mathfrak{1}}, \cdots, {\mathfrak{t}}$, and 
branch points of positive sign of $D_1$ by ${\textsl{1}}, \cdots, {\textsl{b}}$. 
We suppose that 
$1, 2$ are labels on the sheets of $D_1$ 
that are involved in the Roseman move. 
See Figure \ref{roseman3}. 
Let ${\bf 0}$ denote a label on the double-curve of $D_2$ 
created by the move, and 
let $0$ denote a label on the sheet of $D_2$ created by the move. 
The over-sheet of $D_2$ 
that corresponds to the sheet of $D_1$ with label $1$ 
is labeled by $1$, and 
the 
under-sheet of $D_2$ 
that corresponds to the sheet of $D_1$ with label $2$ 
is labeled by $2$. 
We suppose that the sheet of $D_2$ with label $i$ $(i \in \{ 3, \cdots, n \})$ 
corresponds to the sheet of $D_1$ with label $i$, 
that the double curve of $D_2$ with label ${\bf x}$ 
$({\bf x} \in \{ {\bf 1}, \cdots, {\bf m} \})$ 
corresponds to the double curve of $D_1$ with label ${\bf x}$, 
that the triple point of $D_2$ with label ${\mathfrak{p}}$ 
$({\mathfrak{p}} \in \{ {\mathfrak{1}}, \cdots, {\mathfrak{t}} \})$ 
corresponds to the triple point of $D_1$ with label ${\mathfrak{p}}$, 
and that the branch point of positive sign of $D_2$ with label ${\textsl{k}}$ 
$({\textsl{k}} \in \{ {\textsl{1}}, \cdots, {\textsl{b}} \})$ 
corresponds to the branch point of positive sign of $D_1$ 
with label ${\textsl{k}}$ .

The diagram of $D_2$ in Figure \ref{roseman3} shows that 
the differential of $(CR^{--} (D_2), \partial)$ 
on generators is described as follows. \\ 
$\partial a_{21}({\bf x}, i)$ 
$= \mu a_{11}(u^-_{\bf x}, i) 
+ a_{11}(u^+_{\bf x}, i) 
- a_{11}(u^-_{\bf x}, o_{\bf x}) 
a_{11}(o_{\bf x}, i)$, \\ 
$\partial a_{12}(i, {\bf x})$ 
$= - a_{11}(i, u^-_{\bf x}) 
- \mu a_{11}(i, u^+_{\bf x}) 
+ a_{11}(i, o_{\bf x}) 
a_{11}(o_{\bf x}, u^-_{\bf x})$, \\ 
$\partial a_{21}({\bf x}, 0)$ 
$= \mu a_{11}(u^-_{\bf x}, 0) 
+ a_{11}(u^+_{\bf x}, 0) 
- a_{11}(u^-_{\bf x}, o_{\bf x}) 
a_{11}(o_{\bf x}, 0)$, \\ 
$\partial a_{12}(0, {\bf x})$ 
$= - a_{11}(0, u^-_{\bf x}) 
- \mu a_{11}(0, u^+_{\bf x}) 
+ a_{11}(0, o_{\bf x}) 
a_{11}(o_{\bf x}, u^-_{\bf x})$, \\ 
$\partial a_{21}({\bf x}, 1)$ 
$= \mu a_{11}(u^-_{\bf x}, 1) 
+ a_{11}(u^+_{\bf x}, 1) 
- a_{11}(u^-_{\bf x}, o_{\bf x}) 
a_{11}(o_{\bf x}, 1)$, \\ 
$\partial a_{12}(1, {\bf x})$ 
$= - a_{11}(1, u^-_{\bf x}) 
- \mu a_{11}(1, u^+_{\bf x}) 
+ a_{11}(1, o_{\bf x}) 
a_{11}(o_{\bf x}, u^-_{\bf x})$, \\ 
$\partial a_{21}({\bf x}, 2)$ 
$= \mu a_{11}(u^-_{\bf x}, 2) 
+ a_{11}(u^+_{\bf x}, 2) 
- a_{11}(u^-_{\bf x}, o_{\bf x}) 
a_{11}(o_{\bf x}, 2)$, \\ 
$\partial a_{12}(2, {\bf x})$ 
$= - a_{11}(2, u^-_{\bf x}) 
- \mu a_{11}(2, u^+_{\bf x}) 
+ a_{11}(2, o_{\bf x}) 
a_{11}(o_{\bf x}, u^-_{\bf x})$, \\ 
$\partial a_{21}({\bf 0}, i)$ 
$= \mu a_{11}(2, i) 
+ a_{11}(0, i) 
- a_{11}(2, 1) 
a_{11}(1, i)$, \\ 
$\partial a_{12}(i, {\bf 0})$ 
$= - a_{11}(i, 2) 
- \mu a_{11}(i, 0) 
+ a_{11}(i, 1) 
a_{11}(1, 2)$, \\ 
$\partial a_{21}({\bf 0}, 0)$ 
$= \mu a_{11}(2, 0) 
+ a_{11}(0, 0) 
- a_{11}(2, 1) 
a_{11}(1, 0)$, \\ 
$\partial a_{12}(0, {\bf 0})$ 
$= - a_{11}(0, 2) 
- \mu a_{11}(0, 0) 
+ a_{11}(0, 1) 
a_{11}(1, 2)$, \\ 
$\partial a_{21}({\bf 0}, 1)$ 
$= \mu a_{11}(2, 1) 
+ a_{11}(0, 1) 
- a_{11}(2, 1) 
a_{11}(1, 1)$, \\ 
$\partial a_{12}(1, {\bf 0})$ 
$= - a_{11}(1, 2) 
- \mu a_{11}(1, 0) 
+ a_{11}(1, 1) 
a_{11}(1, 2)$, \\ 
$\partial a_{21}({\bf 0}, 2)$ 
$= \mu a_{11}(2, 2) 
+ a_{11}(0, 2) 
- a_{11}(2, 1) 
a_{11}(1, 2)$, \\ 
$\partial a_{12}(2, {\bf 0})$ 
$= - a_{11}(2, 2) 
- \mu a_{11}(2, 0) 
+ a_{11}(2, 1) 
a_{11}(1, 2)$, \\ 
$\partial a_{22}({\bf x}, {\bf y})$ 
$= \mu a_{12}(u^-_{\bf x}, {\bf y}) 
+ a_{12}(u^+_{\bf x}, {\bf y}) 
- a_{11}(u^-_{\bf x}, o_{\bf x}) 
a_{12}(o_{\bf x}, {\bf y})$ 
$+ a_{21}({\bf x}, u^-_{\bf y}) 
+ \mu a_{21}({\bf x}, u^+_{\bf y}) 
- a_{21}({\bf x}, o_{\bf y}) 
a_{11}(o_{\bf y}, u^-_{\bf y})$, \\ 
$\partial a_{22}({\bf x}, {\bf 0})$ 
$= \mu a_{12}(u^-_{\bf x}, {\bf 0}) 
+ a_{12}(u^+_{\bf x}, {\bf 0}) 
- a_{11}(u^-_{\bf x}, o_{\bf x}) 
a_{12}(o_{\bf x}, {\bf 0})$ 
$+ a_{21}({\bf x}, 2) 
+ \mu a_{21}({\bf x}, 0) 
- a_{21}({\bf x}, 1) 
a_{11}(1, 2)$, \\ 
$\partial a_{22}({\bf 0}, {\bf x})$ 
$= \mu a_{12}(2, {\bf x}) 
+ a_{12}(0, {\bf x}) 
- a_{11}(2, 1) 
a_{12}(1, {\bf x})$ 
$+ a_{21}({\bf 0}, u^-_{\bf x}) 
+ \mu a_{21}({\bf 0}, u^+_{\bf x}) 
- a_{21}({\bf 0}, o_{\bf x}) 
a_{11}(o_{\bf x}, u^-_{\bf x})$, \\ 
$\partial a_{22}({\bf 0}, {\bf 0})$ 
$= \mu a_{12}(2, {\bf 0}) 
+ a_{12}(0, {\bf 0}) 
- a_{11}(2, 1) 
a_{12}(1, {\bf 0})$ 
$+ a_{21}({\bf 0}, 2) 
+ \mu a_{21}({\bf 0}, 0) 
- a_{21}({\bf 0}, 1) 
a_{11}(1, 2)$, \\ 
$\partial a_{2}({\bf x})$ 
$= \mu a_{21}({\bf x}, u^+_{\bf x}) 
+ \mu a_{12}(u^-_{\bf x}, {\bf x}) 
- a_{11}(u^-_{\bf x}, o_{\bf x}) 
a_{12}(o_{\bf x}, {\bf x})$, \\ 
$\partial a_{2}({\bf 0})$ 
$= \mu a_{21}({\bf 0}, 0) 
+ \mu a_{12}(2, {\bf 0}) 
- a_{11}(2, 1) 
a_{12}(1, {\bf 0})$, \\ 
$\partial a_{31}^{--}({\mathfrak{p}}, i)$ 
$= \mu a_{21}({\bf tb}^-_{\mathfrak{p}}, i)$ 
$+ a_{21}({\bf tb}^+_{\mathfrak{p}}, i)$
$- a_{21}({\bf tb}^-_{\mathfrak{p}}, m^+_{\mathfrak{p}}) 
a_{11}(m^+_{\mathfrak{p}}, i)$ \\ 
$- \mu a_{21}({\bf mb}^-_{\mathfrak{p}}, i)$ 
$- a_{21}({\bf mb}^+_{\mathfrak{p}}, i)$
$+ a_{21}({\bf mb}^-_{\mathfrak{p}}, t_{\mathfrak{p}}) 
a_{11}(t_{\mathfrak{p}}, i)$ \\ 
$- a_{11}(b^{--}_{\mathfrak{p}}, m^-_{\mathfrak{p}}) 
a_{21}({\bf tm}_{\mathfrak{p}}, i)$
$- a_{12}(b^{--}_{\mathfrak{p}}, {\bf tm}_{\mathfrak{p}}) 
a_{11}(m^+_{\mathfrak{p}}, i)$ 
$+ \mu^{-1} a_{11}(b^{--}_{\mathfrak{p}}, t_{\mathfrak{p}}) 
a_{12}(t_{\mathfrak{p}}, {\bf tm}_{\mathfrak{p}}) 
a_{11}(m^+_{\mathfrak{p}}, i)$, \\ 
$\partial a_{13}^{--}(i, {\mathfrak{p}})$ 
$= a_{12}(i, {\bf tb}^-_{\mathfrak{p}})$ 
$+ \mu a_{12}(i, {\bf tb}^+_{\mathfrak{p}})$ 
$- a_{11}(i, m^+_{\mathfrak{p}}) 
a_{12}(m^+_{\mathfrak{p}}, {\bf tb}^-_{\mathfrak{p}})$ \\ 
$- a_{12}(i, {\bf mb}^-_{\mathfrak{p}})$ 
$- \mu a_{12}(i, {\bf mb}^+_{\mathfrak{p}})$ 
$+ a_{11}(i, t_{\mathfrak{p}}) 
a_{12}(t_{\mathfrak{p}}, {\bf mb}^-_{\mathfrak{p}})$ \\ 
$- a_{12}(i, {\bf tm}_{\mathfrak{p}}) 
a_{11}(m^-_{\mathfrak{p}}, b^{--}_{\mathfrak{p}})$ 
$- a_{11}(i, m^+_{\mathfrak{p}}) 
a_{21}({\bf tm}_{\mathfrak{p}}, b^{--}_{\mathfrak{p}})$ 
$+ a_{11}(i, m^+_{\mathfrak{p}}) 
a_{21}({\bf tm}_{\mathfrak{p}}, t_{\mathfrak{p}}) 
a_{11}(t_{\mathfrak{p}}, b^{--}_{\mathfrak{p}})$, \\ 
$\partial a_{31}^{--}({\mathfrak{p}}, 0)$ 
$= \mu a_{21}({\bf tb}^-_{\mathfrak{p}}, 0)$ 
$+ a_{21}({\bf tb}^+_{\mathfrak{p}}, 0)$
$- a_{21}({\bf tb}^-_{\mathfrak{p}}, m^+_{\mathfrak{p}}) 
a_{11}(m^+_{\mathfrak{p}}, 0)$ \\ 
$- \mu a_{21}({\bf mb}^-_{\mathfrak{p}}, 0)$ 
$- a_{21}({\bf mb}^+_{\mathfrak{p}}, 0)$
$+ a_{21}({\bf mb}^-_{\mathfrak{p}}, t_{\mathfrak{p}}) 
a_{11}(t_{\mathfrak{p}}, 0)$ \\ 
$- a_{11}(b^{--}_{\mathfrak{p}}, m^-_{\mathfrak{p}}) 
a_{21}({\bf tm}_{\mathfrak{p}}, 0)$
$- a_{12}(b^{--}_{\mathfrak{p}}, {\bf tm}_{\mathfrak{p}}) 
a_{11}(m^+_{\mathfrak{p}}, 0)$ 
$+ \mu^{-1} a_{11}(b^{--}_{\mathfrak{p}}, t_{\mathfrak{p}}) 
a_{12}(t_{\mathfrak{p}}, {\bf tm}_{\mathfrak{p}}) 
a_{11}(m^+_{\mathfrak{p}}, 0)$, \\  
$\partial a_{13}^{--}(0, {\mathfrak{p}})$ 
$= a_{12}(0, {\bf tb}^-_{\mathfrak{p}})$ 
$+ \mu a_{12}(0, {\bf tb}^+_{\mathfrak{p}})$ 
$- a_{11}(0, m^+_{\mathfrak{p}}) 
a_{12}(m^+_{\mathfrak{p}}, {\bf tb}^-_{\mathfrak{p}})$ \\ 
$- a_{12}(0, {\bf mb}^-_{\mathfrak{p}})$ 
$- \mu a_{12}(0, {\bf mb}^+_{\mathfrak{p}})$ 
$+ a_{11}(0, t_{\mathfrak{p}}) 
a_{12}(t_{\mathfrak{p}}, {\bf mb}^-_{\mathfrak{p}})$ \\ 
$- a_{12}(0, {\bf tm}_{\mathfrak{p}}) 
a_{11}(m^-_{\mathfrak{p}}, b^{--}_{\mathfrak{p}})$ 
$- a_{11}(0, m^+_{\mathfrak{p}}) 
a_{21}({\bf tm}_{\mathfrak{p}}, b^{--}_{\mathfrak{p}})$ 
$+ a_{11}(0, m^+_{\mathfrak{p}}) 
a_{21}({\bf tm}_{\mathfrak{p}}, t_{\mathfrak{p}}) 
a_{11}(t_{\mathfrak{p}}, b^{--}_{\mathfrak{p}})$, \\ 
$\partial a_{31}^{--}({\mathfrak{p}}, 1)$ 
$= \mu a_{21}({\bf tb}^-_{\mathfrak{p}}, 1)$ 
$+ a_{21}({\bf tb}^+_{\mathfrak{p}}, 1)$
$- a_{21}({\bf tb}^-_{\mathfrak{p}}, m^+_{\mathfrak{p}}) 
a_{11}(m^+_{\mathfrak{p}}, 1)$ \\ 
$- \mu a_{21}({\bf mb}^-_{\mathfrak{p}}, 1)$ 
$- a_{21}({\bf mb}^+_{\mathfrak{p}}, 1)$
$+ a_{21}({\bf mb}^-_{\mathfrak{p}}, t_{\mathfrak{p}}) 
a_{11}(t_{\mathfrak{p}}, 1)$ \\ 
$- a_{11}(b^{--}_{\mathfrak{p}}, m^-_{\mathfrak{p}}) 
a_{21}({\bf tm}_{\mathfrak{p}}, 1)$
$- a_{12}(b^{--}_{\mathfrak{p}}, {\bf tm}_{\mathfrak{p}}) 
a_{11}(m^+_{\mathfrak{p}}, 1)$ 
$+ \mu^{-1} a_{11}(b^{--}_{\mathfrak{p}}, t_{\mathfrak{p}}) 
a_{12}(t_{\mathfrak{p}}, {\bf tm}_{\mathfrak{p}}) 
a_{11}(m^+_{\mathfrak{p}}, 1)$, \\ 
$\partial a_{13}^{--}(1, {\mathfrak{p}})$ 
$= a_{12}(1, {\bf tb}^-_{\mathfrak{p}})$ 
$+ \mu a_{12}(1, {\bf tb}^+_{\mathfrak{p}})$ 
$- a_{11}(1, m^+_{\mathfrak{p}}) 
a_{12}(m^+_{\mathfrak{p}}, {\bf tb}^-_{\mathfrak{p}})$ \\ 
$- a_{12}(1, {\bf mb}^-_{\mathfrak{p}})$ 
$- \mu a_{12}(1, {\bf mb}^+_{\mathfrak{p}})$ 
$+ a_{11}(1, t_{\mathfrak{p}}) 
a_{12}(t_{\mathfrak{p}}, {\bf mb}^-_{\mathfrak{p}})$ \\ 
$- a_{12}(1, {\bf tm}_{\mathfrak{p}}) 
a_{11}(m^-_{\mathfrak{p}}, b^{--}_{\mathfrak{p}})$ 
$- a_{11}(1, m^+_{\mathfrak{p}}) 
a_{21}({\bf tm}_{\mathfrak{p}}, b^{--}_{\mathfrak{p}})$ 
$+ a_{11}(1, m^+_{\mathfrak{p}}) 
a_{21}({\bf tm}_{\mathfrak{p}}, t_{\mathfrak{p}}) 
a_{11}(t_{\mathfrak{p}}, b^{--}_{\mathfrak{p}})$, \\ 
$\partial a_{31}^{--}({\mathfrak{p}}, 2)$ 
$= \mu a_{21}({\bf tb}^-_{\mathfrak{p}}, 2)$ 
$+ a_{21}({\bf tb}^+_{\mathfrak{p}}, 2)$
$- a_{21}({\bf tb}^-_{\mathfrak{p}}, m^+_{\mathfrak{p}}) 
a_{11}(m^+_{\mathfrak{p}}, 2)$ \\ 
$- \mu a_{21}({\bf mb}^-_{\mathfrak{p}}, 2)$ 
$- a_{21}({\bf mb}^+_{\mathfrak{p}}, 2)$
$+ a_{21}({\bf mb}^-_{\mathfrak{p}}, t_{\mathfrak{p}}) 
a_{11}(t_{\mathfrak{p}}, 2)$ \\ 
$- a_{11}(b^{--}_{\mathfrak{p}}, m^-_{\mathfrak{p}}) 
a_{21}({\bf tm}_{\mathfrak{p}}, 2)$
$- a_{12}(b^{--}_{\mathfrak{p}}, {\bf tm}_{\mathfrak{p}}) 
a_{11}(m^+_{\mathfrak{p}}, 2)$ 
$+ \mu^{-1} a_{11}(b^{--}_{\mathfrak{p}}, t_{\mathfrak{p}}) 
a_{12}(t_{\mathfrak{p}}, {\bf tm}_{\mathfrak{p}}) 
a_{11}(m^+_{\mathfrak{p}}, 2)$, \\ 
$\partial a_{13}^{--}(2, {\mathfrak{p}})$ 
$= a_{12}(2, {\bf tb}^-_{\mathfrak{p}})$ 
$+ \mu a_{12}(2, {\bf tb}^+_{\mathfrak{p}})$ 
$- a_{11}(2, m^+_{\mathfrak{p}}) 
a_{12}(m^+_{\mathfrak{p}}, {\bf tb}^-_{\mathfrak{p}})$ \\ 
$- a_{12}(2, {\bf mb}^-_{\mathfrak{p}})$ 
$- \mu a_{12}(2, {\bf mb}^+_{\mathfrak{p}})$ 
$+ a_{11}(2, t_{\mathfrak{p}}) 
a_{12}(t_{\mathfrak{p}}, {\bf mb}^-_{\mathfrak{p}})$ \\ 
$- a_{12}(2, {\bf tm}_{\mathfrak{p}}) 
a_{11}(m^-_{\mathfrak{p}}, b^{--}_{\mathfrak{p}})$ 
$- a_{11}(2, m^+_{\mathfrak{p}}) 
a_{21}({\bf tm}_{\mathfrak{p}}, b^{--}_{\mathfrak{p}})$ 
$+ a_{11}(2, m^+_{\mathfrak{p}}) 
a_{21}({\bf tm}_{\mathfrak{p}}, t_{\mathfrak{p}}) 
a_{11}(t_{\mathfrak{p}}, b^{--}_{\mathfrak{p}})$, \\ 
$\partial a_{32}^{--}({\mathfrak{p}}, {\bf x})$ 
$= \mu a_{22}({\bf tb}^-_{\mathfrak{p}}, {\bf x}) 
+ a_{22}({\bf tb}^+_{\mathfrak{p}}, {\bf x}) 
- a_{21}({\bf tb}^-_{\mathfrak{p}}, m^+_{\mathfrak{p}}) 
a_{12}(m^+_{\mathfrak{p}}, {\bf x})$ \\ 
$- \mu a_{22}({\bf mb}^-_{\mathfrak{p}}, {\bf x}) 
- a_{22}({\bf mb}^+_{\mathfrak{p}}, {\bf x}) 
+ a_{21}({\bf mb}^-_{\mathfrak{p}}, t_{\mathfrak{p}}) 
a_{12}(t_{\mathfrak{p}}, {\bf x})$ \\ 
$- a_{11}(b^{--}_{\mathfrak{p}}, m^-_{\mathfrak{p}}) 
a_{22}({\bf tm}_{\mathfrak{p}}, {\bf x}) 
- a_{12}(b^{--}_{\mathfrak{p}}, {\bf tm}_{\mathfrak{p}}) 
a_{12}(m^+_{\mathfrak{p}}, {\bf x}) 
+ \mu^{-1} a_{11}(b^{--}_{\mathfrak{p}}, t_{\mathfrak{p}}) 
a_{12}(t_{\mathfrak{p}}, {\bf tm}_{\mathfrak{p}}) 
a_{12}(m^+_{\mathfrak{p}}, {\bf x})$ \\ 
$- a_{31}^{--}({\mathfrak{p}}, u^-_{\bf x}) 
- \mu a_{31}^{--}({\mathfrak{p}}, u^+_{\bf x}) 
+ a_{31}^{--}({\mathfrak{p}}, o_{\bf x}) 
a_{11}(o_{\bf x}, u^-_{\bf x})$, \\ 
$\partial a_{23}^{--}({\bf x}, {\mathfrak{p}})$ 
$= \mu a_{13}^{--}(u^-_{\bf x}, {\mathfrak{p}}) 
+ a_{13}^{--}(u^+_{\bf x}, {\mathfrak{p}}) 
- a_{11}(u^-_{\bf x}, o_{\bf x}) 
a_{13}^{--}(o_{\bf x}, {\mathfrak{p}})$ \\ 
$- a_{22}({\bf x}, {\bf tb}^-_{\mathfrak{p}}) 
- \mu a_{22}({\bf x}, {\bf tb}^+_{\mathfrak{p}}) 
+ a_{21}({\bf x}, m^+_{\mathfrak{p}}) 
a_{12}(m^+_{\mathfrak{p}}, {\bf tb}^-_{\mathfrak{p}})$ \\ 
$+ a_{22}({\bf x}, {\bf mb}^-_{\mathfrak{p}}) 
+ \mu a_{22}({\bf x}, {\bf mb}^+_{\mathfrak{p}}) 
- a_{21}({\bf x}, t_{\mathfrak{p}}) 
a_{12}(t_{\mathfrak{p}}, {\bf mb}^-_{\mathfrak{p}})$ \\ 
$+ a_{22}({\bf x}, {\bf tm}_{\mathfrak{p}}) 
a_{11}(m^-_{\mathfrak{p}}, b^{--}_{\mathfrak{p}}) 
+ a_{21}({\bf x}, m^+_{\mathfrak{p}}) 
a_{21}({\bf tm}_{\mathfrak{p}}, b^{--}_{\mathfrak{p}}) 
- a_{21}({\bf x}, m^+_{\mathfrak{p}}) 
a_{21}({\bf tm}_{\mathfrak{p}}, t_{\mathfrak{p}}) 
a_{11}(t_{\mathfrak{p}}, b^{--}_{\mathfrak{p}})$, \\ 
$\partial a_{32}^{--}({\mathfrak{p}}, {\bf 0})$ 
$= \mu a_{22}({\bf tb}^-_{\mathfrak{p}}, {\bf 0}) 
+ a_{22}({\bf tb}^+_{\mathfrak{p}}, {\bf 0}) 
- a_{21}({\bf tb}^-_{\mathfrak{p}}, m^+_{\mathfrak{p}}) 
a_{12}(m^+_{\mathfrak{p}}, {\bf 0})$ \\ 
$- \mu a_{22}({\bf mb}^-_{\mathfrak{p}}, {\bf 0}) 
- a_{22}({\bf mb}^+_{\mathfrak{p}}, {\bf 0}) 
+ a_{21}({\bf mb}^-_{\mathfrak{p}}, t_{\mathfrak{p}}) 
a_{12}(t_{\mathfrak{p}}, {\bf 0})$ \\ 
$- a_{11}(b^{--}_{\mathfrak{p}}, m^-_{\mathfrak{p}}) 
a_{22}({\bf tm}_{\mathfrak{p}}, {\bf 0}) 
- a_{12}(b^{--}_{\mathfrak{p}}, {\bf tm}_{\mathfrak{p}}) 
a_{12}(m^+_{\mathfrak{p}}, {\bf 0}) 
+ \mu^{-1} a_{11}(b^{--}_{\mathfrak{p}}, t_{\mathfrak{p}}) 
a_{12}(t_{\mathfrak{p}}, {\bf tm}_{\mathfrak{p}}) 
a_{12}(m^+_{\mathfrak{p}}, {\bf 0})$ \\ 
$- a_{31}^{--}({\mathfrak{p}}, 2) 
- \mu a_{31}^{--}({\mathfrak{p}}, 0) 
+ a_{31}^{--}({\mathfrak{p}}, 1) 
a_{11}(1, 2)$, \\ 
$\partial a_{23}^{--}({\bf 0}, {\mathfrak{p}})$ 
$= \mu a_{13}^{--}(2, {\mathfrak{p}}) 
+ a_{13}^{--}(0, {\mathfrak{p}}) 
- a_{11}(2, 1) 
a_{13}^{--}(1, {\mathfrak{p}})$ \\ 
$- a_{22}({\bf 0}, {\bf tb}^-_{\mathfrak{p}}) 
- \mu a_{22}({\bf 0}, {\bf tb}^+_{\mathfrak{p}}) 
+ a_{21}({\bf 0}, m^+_{\mathfrak{p}}) 
a_{12}(m^+_{\mathfrak{p}}, {\bf tb}^-_{\mathfrak{p}})$ \\ 
$+ a_{22}({\bf 0}, {\bf mb}^-_{\mathfrak{p}}) 
+ \mu a_{22}({\bf 0}, {\bf mb}^+_{\mathfrak{p}}) 
- a_{21}({\bf 0}, t_{\mathfrak{p}}) 
a_{12}(t_{\mathfrak{p}}, {\bf mb}^-_{\mathfrak{p}})$ \\ 
$+ a_{22}({\bf 0}, {\bf tm}_{\mathfrak{p}}) 
a_{11}(m^-_{\mathfrak{p}}, b^{--}_{\mathfrak{p}}) 
+ a_{21}({\bf 0}, m^+_{\mathfrak{p}}) 
a_{21}({\bf tm}_{\mathfrak{p}}, b^{--}_{\mathfrak{p}}) 
- a_{21}({\bf 0}, m^+_{\mathfrak{p}}) 
a_{21}({\bf tm}_{\mathfrak{p}}, t_{\mathfrak{p}}) 
a_{11}(t_{\mathfrak{p}}, b^{--}_{\mathfrak{p}})$, \\ 
$\partial a_{3}^{--}({\mathfrak{p}})$ 
$= \mu a_{31}^{--}({\mathfrak{p}}, b^{++}_{\mathfrak{p}})$ 
$- \mu a_{13}^{--}(b^{--}_{\mathfrak{p}}, {\mathfrak{p}})$ 
$+ a_{11}(b^{--}_{\mathfrak{p}}, m^-_{\mathfrak{p}}) 
a_{13}^{--}(m^-_{\mathfrak{p}}, {\mathfrak{p}})$ \\ 
$+ (a_{11}(b^{--}_{\mathfrak{p}}, t_{\mathfrak{p}}) 
- \mu^{-1} a_{11}(b^{--}_{\mathfrak{p}}, m^-_{\mathfrak{p}}) 
a_{11}(m^-_{\mathfrak{p}}, t_{\mathfrak{p}})) 
a_{13}^{--}(t_{\mathfrak{p}}, {\mathfrak{p}})$ \\ 
$+ a_{2}({\bf tb}^-_{\mathfrak{p}})$ 
$+ a_{2}({\bf mb}^+_{\mathfrak{p}})$ 
$+ \mu^{-1} a_{11}(b^{--}_{\mathfrak{p}}, m^-_{\mathfrak{p}}) 
a_{2}({\bf tm}_{\mathfrak{p}}) 
a_{11}(m^-_{\mathfrak{p}}, b^{--}_{\mathfrak{p}})$ 
$- a_{2}({\bf tb}^+_{\mathfrak{p}})$ 
$- a_{2}({\bf mb}^-_{\mathfrak{p}})$ \\ 
$- \mu a_{22}({\bf tb}^-_{\mathfrak{p}}, {\bf mb}^+_{\mathfrak{p}})$ 
$+ a_{11}(b^{--}_{\mathfrak{p}}, m^-_{\mathfrak{p}}) 
a_{22}({\bf tm}_{\mathfrak{p}}, {\bf mb}^+_{\mathfrak{p}})$ 
$- \mu^{-1} a_{11}(b^{--}_{\mathfrak{p}}, m^-_{\mathfrak{p}}) 
a_{22}({\bf tm}_{\mathfrak{p}}, {\bf tb}^-_{\mathfrak{p}})$ \\ 
$+ \mu a_{22}({\bf mb}^-_{\mathfrak{p}}, {\bf tb}^+_{\mathfrak{p}})$ 
$- a_{21}({\bf mb}^-_{\mathfrak{p}}, t_{\mathfrak{p}}) 
a_{12}(t_{\mathfrak{p}}, {\bf tb}^+_{\mathfrak{p}})$ \\ 
$+ (\mu^{-1} a_{11}(b^{--}_{\mathfrak{p}}, m^-_{\mathfrak{p}}) 
a_{21}({\bf tm}_{\mathfrak{p}}, m^+_{\mathfrak{p}}) 
+ a_{12}(b^{--}_{\mathfrak{p}}, {\bf tm}_{\mathfrak{p}}) 
- \mu^{-1} a_{11}(b^{--}_{\mathfrak{p}}, t_{\mathfrak{p}}) 
a_{12}(t_{\mathfrak{p}}, {\bf tm}_{\mathfrak{p}}))$ \\ 
$(a_{12}(m^+_{\mathfrak{p}}, {\bf tb}^-_{\mathfrak{p}}) 
+ a_{21}({\bf tm}_{\mathfrak{p}}, b^{--}_{\mathfrak{p}}) 
- a_{21}({\bf tm}_{\mathfrak{p}}, t_{\mathfrak{p}}) 
a_{11}(t_{\mathfrak{p}}, b^{--}_{\mathfrak{p}}))$ \\ 
$+ (a_{21}({\bf tb}^-_{\mathfrak{p}}, m^+_{\mathfrak{p}}) 
+ a_{12}(b^{--}_{\mathfrak{p}}, {\bf tm}_{\mathfrak{p}}) 
- \mu^{-1} a_{11}(b^{--}_{\mathfrak{p}}, t_{\mathfrak{p}}) 
a_{12}(t_{\mathfrak{p}}, {\bf tm}_{\mathfrak{p}})) 
a_{12}(m^+_{\mathfrak{p}}, {\bf mb}^+_{\mathfrak{p}})$, \\ 
$\partial a_{33}^{--}({\mathfrak{p}}, {\mathfrak{q}})$ 
$= \mu a_{23}^{--}({\bf tb}^-_{\mathfrak{p}}, {\mathfrak{q}}) 
+ a_{23}^{--}({\bf tb}^+_{\mathfrak{p}}, {\mathfrak{q}}) 
- a_{21}({\bf tb}^-_{\mathfrak{p}}, m^+_{\mathfrak{p}}) 
a_{13}^{--}(m^+_{\mathfrak{p}}, {\mathfrak{q}})$ \\ 
$- \mu a_{23}^{--}({\bf mb}^-_{\mathfrak{p}}, {\mathfrak{q}}) 
- a_{23}^{--}({\bf mb}^+_{\mathfrak{p}}, {\mathfrak{q}}) 
+ a_{21}({\bf mb}^-_{\mathfrak{p}}, t_{\mathfrak{p}}) 
a_{13}^{--}(t_{\mathfrak{p}}, {\mathfrak{q}})$ \\ 
$- a_{11}(b^{--}_{\mathfrak{p}}, m^-_{\mathfrak{p}}) 
a_{23}^{--}({\bf tm}_{\mathfrak{p}}, {\mathfrak{q}}) 
- a_{12}(b^{--}_{\mathfrak{p}}, {\bf tm}_{\mathfrak{p}}) 
a_{13}^{--}(m^+_{\mathfrak{p}}, {\mathfrak{q}})$ 
$+ \mu^{-1} a_{11}(b^{--}_{\mathfrak{p}}, t_{\mathfrak{p}}) 
a_{12}(t_{\mathfrak{p}}, {\bf tm}_{\mathfrak{p}}) 
a_{13}^{--}(m^+_{\mathfrak{p}}, {\mathfrak{q}})$ \\ 
$+ a_{32}^{--}({\mathfrak{p}}, {\bf tb}^-_{\mathfrak{q}}) 
+ \mu a_{32}^{--}({\mathfrak{p}}, {\bf tb}^+_{\mathfrak{q}}) 
- a_{31}^{--}({\mathfrak{p}}, m^+_{\mathfrak{q}}) 
a_{12}(m^+_{\mathfrak{q}}, {\bf tb}^-_{\mathfrak{q}})$ \\ 
$- a_{32}^{--}({\mathfrak{p}}, {\bf mb}^-_{\mathfrak{q}}) 
- \mu a_{32}^{--}({\mathfrak{p}}, {\bf mb}^+_{\mathfrak{q}}) 
+ a_{31}^{--}({\mathfrak{p}}, t_{\mathfrak{q}}) 
a_{12}(t_{\mathfrak{q}}, {\bf mb}^-_{\mathfrak{q}})$ \\ 
$- a_{32}^{--}({\mathfrak{p}}, {\bf tm}_{\mathfrak{q}}) 
a_{11}(m^-_{\mathfrak{q}}, b^{--}_{\mathfrak{q}}) 
- a_{31}^{--}({\mathfrak{p}}, m^+_{\mathfrak{q}}) 
a_{21}({\bf tm}_{\mathfrak{q}}, b^{--}_{\mathfrak{q}})$ 
$+ a_{31}^{--}({\mathfrak{p}}, m^+_{\mathfrak{q}}) 
a_{21}({\bf tm}_{\mathfrak{q}}, t_{\mathfrak{q}}) 
a_{11}(t_{\mathfrak{q}}, b^{--}_{\mathfrak{q}})$, \\ 
$\partial a_{b1}({\textsl{k}}, i)$ 
$= a_{21}({\bf dc}_{\textsl{k}}, i)$, 
$\partial a_{1b}(i, {\textsl{k}})$ 
$= a_{12}(i, {\bf dc}_{\textsl{k}})$, 
$\partial a_{b1}({\textsl{k}}, 0)$ 
$= a_{21}({\bf dc}_{\textsl{k}}, 0)$, 
$\partial a_{1b}(0, {\textsl{k}})$ 
$= a_{12}(0, {\bf dc}_{\textsl{k}})$, \\  
$\partial a_{b1}({\textsl{k}}, 1)$ 
$= a_{21}({\bf dc}_{\textsl{k}}, 1)$,  
$\partial a_{1b}(1, {\textsl{k}})$ 
$= a_{12}(1, {\bf dc}_{\textsl{k}})$, 
$\partial a_{b1}({\textsl{k}}, 2)$ 
$= a_{21}({\bf dc}_{\textsl{k}}, 2)$,  
$\partial a_{1b}(2, {\textsl{k}})$ 
$= a_{12}(2, {\bf dc}_{\textsl{k}})$, \\  
$\partial a_{b2}({\textsl{k}}, {\bf x})$ 
$= a_{22}({\bf dc}_{\textsl{k}}, {\bf x}) 
- a_{b1}^{--}({\textsl{k}}, u^-_{\bf x}) 
- \mu a_{b1}^{--}({\textsl{k}}, u^+_{\bf x}) 
+ a_{b1}^{--}({\textsl{k}}, o_{\bf x}) 
a_{11}(o_{\bf x}, u^-_{\bf x})$, \\ 
$\partial a_{2b}({\bf x}, {\textsl{k}})$ 
$= \mu a_{1b}^{--}(u^-_{\bf x}, {\textsl{k}}) 
+ a_{1b}^{--}(u^+_{\bf x}, {\textsl{k}}) 
- a_{11}(u^-_{\bf x}, o_{\bf x}) 
a_{1b}^{--}(o_{\bf x}, {\textsl{k}}) 
- a_{22}({\bf x}, {\bf dc}_{\textsl{k}})$, \\  
$\partial a_{b2}({\textsl{k}}, {\bf 0})$ 
$= a_{22}({\bf dc}_{\textsl{k}}, {\bf 0}) 
- a_{b1}^{--}({\textsl{k}}, 2) 
- \mu a_{b1}^{--}({\textsl{k}}, 0) 
+ a_{b1}^{--}({\textsl{k}}, 1) 
a_{11}(1, 2)$, \\  
$\partial a_{2b}({\bf 0}, {\textsl{k}})$ 
$= \mu a_{1b}^{--}(2, {\textsl{k}}) 
+ a_{1b}^{--}(0, {\textsl{k}}) 
- a_{11}(2, 1) 
a_{1b}^{--}(1, {\textsl{k}}) 
- a_{22}({\bf 0}, {\bf dc}_{\textsl{k}})$, \\ 
$\partial a_{b2}({\textsl{k}}, {\bf dc}_{\textsl{k}})$ 
$= a_{22}({\bf dc}_{\textsl{k}}, {\bf dc}_{\textsl{k}})$, 
$\partial a_{2b}({\bf dc}_{\textsl{k}}, {\textsl{k}})$ 
$= - a_{22}({\bf dc}_{\textsl{k}}, {\bf dc}_{\textsl{k}})$, \\  
$\partial a_{b3}^{--}({\textsl{k}}, {\mathfrak{p}})$ 
$= a_{23}^{--}({\bf dc}_{\textsl{k}}, {\mathfrak{p}}) 
+ a_{b2}({\textsl{k}}, {\bf tb}^-_{\mathfrak{p}})$ 
$+ \mu a_{b2}({\textsl{k}}, {\bf tb}^+_{\mathfrak{p}})$ 
$- a_{b1}({\textsl{k}}, m^+_{\mathfrak{p}}) 
a_{12}(m^+_{\mathfrak{p}}, {\bf tb}^-_{\mathfrak{p}})$ \\ 
$- a_{b2}({\textsl{k}}, {\bf mb}^-_{\mathfrak{p}})$ 
$- \mu a_{b2}({\textsl{k}}, {\bf mb}^+_{\mathfrak{p}})$ 
$+ a_{b1}({\textsl{k}}, t_{\mathfrak{p}}) 
a_{12}(t_{\mathfrak{p}}, {\bf mb}^-_{\mathfrak{p}})$ \\ 
$- a_{b2}({\textsl{k}}, {\bf tm}_{\mathfrak{p}}) 
a_{11}(m^-_{\mathfrak{p}}, b^{--}_{\mathfrak{p}})$ 
$- a_{b1}({\textsl{k}}, m^+_{\mathfrak{p}}) 
a_{21}({\bf tm}_{\mathfrak{p}}, b^{--}_{\mathfrak{p}})$ 
$+ a_{b1}({\textsl{k}}, m^+_{\mathfrak{p}}) 
a_{21}({\bf tm}_{\mathfrak{p}}, t_{\mathfrak{p}}) 
a_{11}(t_{\mathfrak{p}}, b^{--}_{\mathfrak{p}})$, \\  
$\partial a_{3b}^{--}({\mathfrak{p}}, {\textsl{k}})$ 
$= \mu a_{2b}({\bf tb}^-_{\mathfrak{p}}, {\textsl{k}})$ 
$+ a_{2b}({\bf tb}^+_{\mathfrak{p}}, {\textsl{k}})$
$- a_{21}({\bf tb}^-_{\mathfrak{p}}, m^+_{\mathfrak{p}}) 
a_{1b}(m^+_{\mathfrak{p}}, {\textsl{k}})$ \\ 
$- \mu a_{2b}({\bf mb}^-_{\mathfrak{p}}, {\textsl{k}})$ 
$- a_{2b}({\bf mb}^+_{\mathfrak{p}}, {\textsl{k}})$
$+ a_{21}({\bf mb}^-_{\mathfrak{p}}, t_{\mathfrak{p}}) 
a_{1b}(t_{\mathfrak{p}}, {\textsl{k}})$ \\ 
$- a_{11}(b^{--}_{\mathfrak{p}}, m^-_{\mathfrak{p}}) 
a_{2b}({\bf tm}_{\mathfrak{p}}, {\textsl{k}})$
$- a_{12}(b^{--}_{\mathfrak{p}}, {\bf tm}_{\mathfrak{p}}) 
a_{1b}(m^+_{\mathfrak{p}}, {\textsl{k}})$ 
$+ \mu^{-1} a_{11}(b^{--}_{\mathfrak{p}}, t_{\mathfrak{p}}) 
a_{12}(t_{\mathfrak{p}}, {\bf tm}_{\mathfrak{p}}) 
a_{1b}(m^+_{\mathfrak{p}}, {\textsl{k}})$ \\ 
$+ a_{32}^{--}({\mathfrak{p}}, {\bf dc}_{\textsl{k}})$, \\  
$\partial a_{bb}({\textsl{k}}, {\textsl{l}})$ 
$= a_{2b}({\bf dc}_{\textsl{k}}, {\textsl{l}})$ 
$- a_{b2}({\textsl{k}}, {\bf dc}_{\textsl{l}})$,  
$\partial a_{b}({\textsl{k}})$ 
$= a_{2}({\bf dc}_{\textsl{k}})$ 
$- \mu a_{b1}({\textsl{k}}, sh_{\textsl{k}})$ 
$+ a_{1b}(sh_{\textsl{k}}, {\textsl{k}})$, \\  
where $i \in \{ 3, \cdots, n \}$, 
${\bf x}, {\bf y} \in \{ {\bf 1}, \cdots, {\bf m} \}$, 
${\mathfrak{p}}, {\mathfrak{q}} \in \{ {\mathfrak{1}}, \cdots, {\mathfrak{t}} \}$, 
${\textsl{k}}, {\textsl{l}} \in \{ {\textsl{1}}, \cdots, {\textsl{b}} \}$.

We define a tame isomorphism 
$\varphi \colon (CR^{--}(D_2), \partial) 
\to (CR^{--1}(D_2), \partial^1)$ 
by \\ 
$\varphi(a_{11}(0, i)) 
= a_{11}^1(0, i) 
- \mu a_{11}^1(2, i) 
+ a_{11}^1(2, 1) 
a_{11}^1(1, i)$, \\ 
$\varphi(a_{11}(i, 0)) 
= \mu^{-1} (a_{11}^1(i, 0) 
- a_{11}^1(i, 2) 
+ a_{11}^1(i, 1) 
a_{11}^1(1, 2))$, \\ 
$\varphi(a_{11}(i, j)) = a_{11}^1(i, j)$, 
$\varphi(a_{21}({\bf x}, \ell)) = a_{21}^1({\bf x}, \ell)$, 
$\varphi(a_{12}(\ell, {\bf x})) = a_{12}^1(\ell, {\bf x})$, \\ 
$\varphi(a_{22}({\bf x}, {\bf y})) = a_{22}^1({\bf x}, {\bf y})$, 
$\varphi(a_{2}({\bf x})) = a_{2}^1({\bf x})$, 
$\varphi(a_{31}^{--}({\mathfrak{p}}, \ell)) = a_{31}^{--1}({\mathfrak{p}}, \ell)$, 
$\varphi(a_{13}^{--}(\ell, {\mathfrak{p}})) = a_{13}^{--1}(\ell, {\mathfrak{p}})$, \\ 
$\varphi(a_{32}^{--}({\mathfrak{p}}, {\bf x})) 
= a_{32}^{--1}({\mathfrak{p}}, {\bf x})$, 
$\varphi(a_{23}^{--}({\bf x}, {\mathfrak{p}})) 
= a_{23}^{--1}({\bf x}, {\mathfrak{p}})$, \\ 
$\varphi(a_{3}^{--}({\mathfrak{p}})) = a_{3}^{--1}({\mathfrak{p}})$, 
$\varphi(a_{33}^{--}({\mathfrak{p}}, {\mathfrak{q}})) 
= a_{33}^{--1}({\mathfrak{p}}, {\mathfrak{q}})$, \\ 
$\varphi(a_{b1}({\textsl{k}}, \ell)) = a_{b1}^1({\textsl{k}}, \ell)$, 
$\varphi(a_{1b}(\ell, {\textsl{k}})) = a_{1b}^1(\ell, {\textsl{k}})$, 
$\varphi(a_{b2}({\textsl{k}}, {\bf x})) 
= a_{b2}^1({\textsl{k}}, {\bf x})$, 
$\varphi(a_{2b}({\bf x}, {\textsl{k}})) 
= a_{2b}^1({\bf x}, {\textsl{k}})$, \\ 
$\varphi(a_{b3}^{--}({\textsl{k}}, {\mathfrak{p}})) 
= a_{b3}^{--1}({\textsl{k}}, {\mathfrak{p}})$, 
$\varphi(a_{3b}^{--}({\mathfrak{p}}, {\textsl{k}})) 
= a_{3b}^{--1}({\mathfrak{p}}, {\textsl{k}})$, 
$\varphi(a_{b}({\textsl{k}})) = a_{b}^1({\textsl{k}})$, 
$\varphi(a_{bb}({\textsl{k}}, {\textsl{l}})) 
= a_{bb}^1({\textsl{k}}, {\textsl{l}})$, \\ 
where $i \neq j \in \{ 1, \cdots, n \}$, 
$\ell \in \{0, 1, \cdots, n \}$, 
${\bf x}, {\bf y} \in \{ {\bf 0}, {\bf 1}, \cdots, {\bf m} \}$, 
${\mathfrak{p}}, {\mathfrak{q}} \in \{ {\mathfrak{1}}, \cdots, {\mathfrak{t}} \}$, 
${\textsl{k}}, {\textsl{l}} \in \{ {\textsl{1}}, \cdots, {\textsl{b}} \}$. 
For example, from the equation 
$\partial a_{21}({\bf 0}, i)$ 
$= \mu a_{11}(2, i) 
+ a_{11}(0, i) 
- a_{11}(2, 1) 
a_{11}(1, i)$ 
in $(CR^{--}(D_2), \partial)$, 
we obtain 
$\partial^1 a_{21}^1({\bf 0}, i)$ 
$= \mu a_{11}^1(2, i) 
+ (a_{11}^1(0, i) 
- \mu a_{11}^1(2, i) 
+ a_{11}^1(2, 1) 
a_{11}^1(1, i)) 
- a_{11}^1(2, 1) 
a_{11}^1(1, i)$ 
$= a_{11}^1(0, i)$ 
in $(CR^{--1}(D_2), \partial^1)$. 
We notice equations 
$\partial^1 a_{21}^1({\bf 0}, i) = a_{11}^1(0, i)$, 
$\partial^1 a_{11}^1(0, i) = 0$,  
and 
$\partial^1 a_{12}^1(i, {\bf 0}) = - a_{11}^1(i, 0)$, 
$\partial^1 a_{11}^1(i, 0) = 0$ 
in $(CR^{--1}(D_2), \partial^1)$.

We eliminate pairs of generators 
$(a_{21}^1({\bf 0}, i), a_{11}^1(0, i))$ 
and 
$(a_{12}^1(i, {\bf 0}), a_{11}^1(i, 0))$ 
by a sequence of destabilizations on $(CR^{--1}(D_2), \partial^1)$ 
for $i \in \{ 1, \cdots, n \}$, 
and we obtain a differential graded algebra 
$(CR^{--2}(D_2), \partial^2)$. 
For example, from the equation 
$\partial^1 a_{2}^1({\bf 0})$ 
$= \mu a_{21}^1({\bf 0}, 0) 
+ \mu a_{12}^1(2, {\bf 0}) 
- a_{11}^1(2, 1) 
a_{12}^1(1, {\bf 0})$ in $(CR^{--1}(D_2), \partial^1)$, 
we obtain 
$\partial^2 a_{2}^2({\bf 0})$ 
$= \mu a_{21}^2({\bf 0}, 0)$ in $(CR^{--2}(D_2), \partial^2)$. 
We notice 
equations 
$\partial^2 a_{2}^2({\bf 0}) 
= \mu a_{21}^2({\bf 0}, 0)$, 
$\partial^2 a_{21}^2({\bf 0}, 0) = 0$, and 
$\partial^2 a_{22}^2({\bf 0}, {\bf 0}) 
= a_{12}^2(0, {\bf 0}) 
+ \mu a_{21}^2({\bf 0}, 0)$, 
$\partial^2 a_{12}^2(0, {\bf 0}) = 0$ 
in $(CR^{--2}(D_2), \partial^2)$.

We eliminate the pair of generators 
$(a_{2}^2({\bf 0}), a_{21}^2({\bf 0}, 0))$ 
by a destabilization on $(CR^{--2}(D_2), \partial^2)$, 
and we obtain $(CR^{--3}(D_2), \partial^3)$. 
From equations 
$\partial^2 a_{22}^2({\bf 0}, {\bf 0}) 
= a_{12}^2(0, {\bf 0}) 
+ \mu a_{21}^2({\bf 0}, 0)$ and 
$\partial^2 a_{12}^2(0, {\bf 0}) = 0$ 
in $(CR^{--2}(D_2), \partial^2)$, 
we obtain equations 
$\partial^3 a_{22}^3({\bf 0}, {\bf 0}) 
= a_{12}^3(0, {\bf 0})$ 
and 
$\partial^3 a_{12}^3(0, {\bf 0}) = 0$ 
in $(CR^{--3}(D_2), \partial^3)$, 
respectively. 
We eliminate 
the pair of generators $(a_{22}^3({\bf 0}, {\bf 0}), a_{12}^3(0, {\bf 0}))$ 
by a destabilization on $(CR^{--3}(D_2), \partial^3)$, 
and we obtain $(CR^{--4}(D_2), \partial^4)$.

We define a tame isomorphism 
$\varphi^4 \colon (CR^{--4}(D_2), \partial^4) 
\to (CR^{--5}(D_2), \partial^5)$ 
by \\ 
$\varphi^4 (a_{21}^4({\bf x}, 0)) 
= \mu^{-1} (a_{21}^5({\bf x}, 0) 
- a_{21}^5({\bf x}, 2) 
+ a_{21}^5({\bf x}, 1) 
a_{11}^5(1, 2))$, \\ 
$\varphi^4 (a_{12}^4(0, {\bf x})) 
= a_{12}^5(0, {\bf x}) 
- \mu a_{12}^5(2, {\bf x}) 
+ a_{11}^5(2, 1) 
a_{12}^5(1, {\bf x})$, \\ 
$\varphi^4(a_{11}^4(i, j)) = a_{11}^5(i, j)$, 
$\varphi^4(a_{21}^4({\bf x}, i)) = a_{21}^5({\bf x}, i)$, 
$\varphi^4(a_{12}^4(i, {\bf x})) = a_{12}^5(i, {\bf x})$, \\ 
$\varphi^4(a_{22}^4({\bf x}, {\bf y})) = a_{22}^5({\bf x}, {\bf y})$, 
$\varphi^4(a_{2}^4({\bf x})) = a_{2}^5({\bf x})$, \\ 
$\varphi^4(a_{31}^{--4}({\mathfrak{p}}, \ell)) 
= a_{31}^{--5}({\mathfrak{p}}, \ell)$, 
$\varphi^4(a_{13}^{--4}(\ell, {\mathfrak{p}})) 
= a_{13}^{--5}(\ell, {\mathfrak{p}})$, \\ 
$\varphi^4(a_{32}^{--4}({\mathfrak{p}}, {\bf z})) 
= a_{32}^{--5}({\mathfrak{p}}, {\bf z})$, 
$\varphi^4(a_{23}^{--4}({\bf z}, {\mathfrak{p}})) 
= a_{23}^{--5}({\bf z}, {\mathfrak{p}})$, \\ 
$\varphi^4(a_{3}^{--4}({\mathfrak{p}})) = a_{3}^{--5}({\mathfrak{p}})$, 
$\varphi^4(a_{33}^{--4}({\mathfrak{p}}, {\mathfrak{q}})) 
= a_{33}^{--5}({\mathfrak{p}}, {\mathfrak{q}})$, \\ 
$\varphi^4(a_{b1}^4({\textsl{k}}, \ell)) = a_{b1}^5({\textsl{k}}, \ell)$, 
$\varphi^4(a_{1b}^4(\ell, {\textsl{k}})) = a_{1b}^5(\ell, {\textsl{k}})$, \\ 
$\varphi^4(a_{b2}^4({\textsl{k}}, {\bf z})) 
= a_{b2}^5({\textsl{k}}, {\bf z})$, 
$\varphi^4(a_{2b}^4({\bf z}, {\textsl{k}})) 
= a_{2b}^5({\bf z}, {\textsl{k}})$, \\ 
$\varphi^4(a_{b3}^{--4}({\textsl{k}}, {\mathfrak{p}})) 
= a_{b3}^{--5}({\textsl{k}}, {\mathfrak{p}})$, 
$\varphi^4(a_{3b}^{--4}({\mathfrak{p}}, {\textsl{k}})) 
= a_{3b}^{--5}({\mathfrak{p}}, {\textsl{k}})$, \\ 
$\varphi^4(a_{b}^4({\textsl{k}})) = a_{b}^5({\textsl{k}})$, 
$\varphi^4(a_{bb}^4({\textsl{k}}, {\textsl{l}})) 
= a_{bb}^5({\textsl{k}}, {\textsl{l}})$, \\ 
where $i \neq j \in \{ 1, \cdots, n \}$, 
$\ell \in \{0, 1, \cdots, n \}$, 
${\bf x}, {\bf y} \in \{ {\bf 1}, \cdots, {\bf m} \}$, 
${\bf z} \in \{ {\bf 0}, {\bf 1}, \cdots, {\bf m} \}$, 
${\mathfrak{p}}, {\mathfrak{q}} \in \{ {\mathfrak{1}}, \cdots, {\mathfrak{t}} \}$, 
${\textsl{k}}, {\textsl{l}} \in \{ {\textsl{1}}, \cdots, {\textsl{b}} \}$. 
For example, from the equation \\ 
$\partial^4 a_{21}^4({\bf x}, 0)$ 
$= \mu (\mu^{-1} (- a_{11}^4(u^-_{\bf x}, 2) 
+ a_{11}^4(u^-_{\bf x}, 1) 
a_{11}^4(1, 2))) 
+ (\mu^{-1} (- a_{11}^4(u^+_{\bf x}, 2) 
+ a_{11}^4(u^+_{\bf x}, 1) 
a_{11}^4(1, 2)))$ \\ 
$- a_{11}^4(u^-_{\bf x}, o_{\bf x}) 
(\mu^{-1} (- a_{11}^4(o_{\bf x}, 2) 
+ a_{11}^4(o_{\bf x}, 1) 
a_{11}^4(1, 2)))$ 
in $(CR^{--4}(D_2), \partial^4)$, 
we obtain \\ 
$\partial^5 (\mu^{-1} (a_{21}^5({\bf x}, 0) 
- a_{21}^5({\bf x}, 2) 
+ a_{21}^5({\bf x}, 1) 
a_{11}^5(1, 2)))$ \\ 
$= \mu (\mu^{-1} (- a_{11}^5(u^-_{\bf x}, 2) 
+ a_{11}^5(u^-_{\bf x}, 1) 
a_{11}^5(1, 2))) 
+ (\mu^{-1} (- a_{11}^5(u^+_{\bf x}, 2) 
+ a_{11}^5(u^+_{\bf x}, 1) 
a_{11}^5(1, 2)))$ \\ 
$- a_{11}^5(u^-_{\bf x}, o_{\bf x}) 
(\mu^{-1} (- a_{11}^5(o_{\bf x}, 2) 
+ a_{11}^5(o_{\bf x}, 1) 
a_{11}^5(1, 2)))$ 
in $(CR^{--5}(D_2), \partial^5)$. \\ 
Since we have equations 
$\partial^5 a_{21}^5({\bf x}, 1)$ 
$= \mu a_{11}^5(u^-_{\bf x}, 1) 
+ a_{11}^5(u^+_{\bf x}, 1) 
- a_{11}^5(u^-_{\bf x}, o_{\bf x}) 
a_{11}^5(o_{\bf x}, 1)$ and  \\ 
$\partial^5 a_{21}^5({\bf x}, 2)$ 
$= \mu a_{11}^5(u^-_{\bf x}, 2) 
+ a_{11}^5(u^+_{\bf x}, 2) 
- a_{11}^5(u^-_{\bf x}, o_{\bf x}) 
a_{11}^5(o_{\bf x}, 2)$ 
in $(CR^{--5}(D_2), \partial^5)$, \\ 
we obtain 
$\partial^5 a_{21}^5({\bf x}, 0) = 0$ 
in $(CR^{--5}(D_2), \partial^5)$. 
We notice equations 
$\partial^5 a_{22}^5({\bf x}, {\bf 0})$ 
$= a_{21}^5({\bf x}, 0)$, 
$\partial^5 a_{21}^5({\bf x}, 0) = 0$, and 
$\partial^5 a_{22}^5({\bf 0}, {\bf x})$ 
$= a_{12}^5(0, {\bf x})$, 
$\partial^5 a_{12}^5(0, {\bf x}) = 0$ 
in $(CR^{--5}(D_2), \partial^5)$. 

We eliminate pairs of generators 
$(a_{22}^5({\bf x}, {\bf 0}), a_{21}^5({\bf x}, 0))$ 
and 
$(a_{22}^5({\bf 0}, {\bf x}), a_{12}^5(0, {\bf x}))$ 
by a sequence of destabilizations 
on $(CR^{--5}(D_2), \partial^5)$ 
for ${\bf x} \in \{ {\bf 1}, \cdots, {\bf m} \}$, 
and we obtain $(CR^{--6}(D_2), \partial^6)$.

We define a tame isomorphism 
$\varphi^6 \colon (CR^{--6}(D_2), \partial^6) 
\to (CR^{--7}(D_2), \partial^7)$ 
by \\ 
$\varphi^6 (a_{31}^{--6}({\mathfrak{p}}, 0)) 
= \mu^{-1} (a_{31}^{--7}({\mathfrak{p}}, 0) 
- a_{31}^{--7}({\mathfrak{p}}, 2) 
+ a_{31}^{--7}({\mathfrak{p}}, 1) 
a_{11}^7(1, 2))$, \\ 
$\varphi^6 (a_{13}^{--6}(0, {\mathfrak{p}})) 
= a_{13}^{--7}(0, {\mathfrak{p}}) 
- \mu a_{13}^{--7}(2, {\mathfrak{p}}) 
+ a_{11}^7(2, 1) 
a_{13}^{--7}(1, {\mathfrak{p}})$, \\ 
$\varphi^6(a_{11}^6(i, j)) = a_{11}^7(i, j)$, 
$\varphi^6(a_{21}^6({\bf w}, i)) = a_{21}^7({\bf w}, i)$, 
$\varphi^6(a_{12}^6(i, {\bf w})) = a_{12}^7(i, {\bf w})$, \\ 
$\varphi^6(a_{22}^6({\bf w}, {\bf z})) = a_{22}^7({\bf w}, {\bf z})$, 
$\varphi^6(a_{2}^6({\bf w})) = a_{2}^7({\bf w})$, \\ 
$\varphi^6(a_{31}^{--6}({\mathfrak{p}}, i)) = a_{31}^{--7}({\mathfrak{p}}, i)$, 
$\varphi^6(a_{13}^{--6}(i, {\mathfrak{p}})) = a_{13}^{--7}(i, {\mathfrak{p}})$, \\ 
$\varphi^6(a_{32}^{--6}({\mathfrak{p}}, {\bf x})) 
= a_{32}^{--7}({\mathfrak{p}}, {\bf x})$, 
$\varphi^6(a_{23}^{--6}({\bf x}, {\mathfrak{p}})) 
= a_{23}^{--7}({\bf x}, {\mathfrak{p}})$, \\ 
$\varphi^6(a_{3}^{--6}({\mathfrak{p}})) = a_{3}^{--1}({\mathfrak{p}})$, 
$\varphi^6(a_{33}^{--6}({\mathfrak{p}}, {\mathfrak{q}})) 
= a_{33}^{--7}({\mathfrak{p}}, {\mathfrak{q}})$, \\ 
$\varphi^6(a_{b1}^6({\textsl{k}}, \ell)) = a_{b1}^7({\textsl{k}}, \ell)$, 
$\varphi^6(a_{1b}^6(\ell, {\textsl{k}})) = a_{1b}^7(\ell, {\textsl{k}})$, \\ 
$\varphi^6(a_{b2}^6({\textsl{k}}, {\bf x})) 
= a_{b2}^7({\textsl{k}}, {\bf x})$, 
$\varphi^6(a_{2b}^6({\bf x}, {\textsl{k}})) 
= a_{2b}^7({\bf x}, {\textsl{k}})$, \\ 
$\varphi^6(a_{b3}^{--6}({\textsl{k}}, {\mathfrak{p}})) 
= a_{b3}^{--7}({\textsl{k}}, {\mathfrak{p}})$, 
$\varphi^6(a_{3b}^{--6}({\mathfrak{p}}, {\textsl{k}})) 
= a_{3b}^{--7}({\mathfrak{p}}, {\textsl{k}})$, \\ 
$\varphi^6(a_{b}^6({\textsl{k}})) = a_{b}^7({\textsl{k}})$, 
$\varphi^6(a_{bb}^6({\textsl{k}}, {\textsl{l}})) 
= a_{bb}^7({\textsl{k}}, {\textsl{l}})$, \\ 
where $i \neq j \in \{ 1, \cdots, n \}$, 
$\ell \in \{0, 1, \cdots, n \}$, 
${\bf x}, {\bf y} \in \{ {\bf 0}, {\bf 1}, \cdots, {\bf m} \}$, 
${\bf w}, {\bf z} \in \{ {\bf 1}, \cdots, {\bf m} \}$, 
${\mathfrak{p}}, {\mathfrak{q}} \in \{ {\mathfrak{1}}, \cdots, {\mathfrak{t}} \}$, 
${\textsl{k}}, {\textsl{l}} \in \{ {\textsl{1}}, \cdots, {\textsl{b}} \}$. 
For example, from the equation \\ 
$\partial^6 a_{31}^{--6}({\mathfrak{p}}, 0)$ 
$= \mu (\mu^{-1} (- a_{21}^6({\bf tb}^-_{\mathfrak{p}}, 2) 
+ a_{21}^6({\bf tb}^-_{\mathfrak{p}}, 1) 
a_{11}^6(1, 2)))$ \\ 
$+ (\mu^{-1} (- a_{21}^6({\bf tb}^+_{\mathfrak{p}}, 2) 
+ a_{21}^6({\bf tb}^+_{\mathfrak{p}}, 1) 
a_{11}^6(1, 2)))$ \\ 
$- a_{21}^6({\bf tb}^-_{\mathfrak{p}}, m^+_{\mathfrak{p}}) 
(\mu^{-1} (- a_{11}^6(m^+_{\mathfrak{p}}, 2) 
+ a_{11}^6(m^+_{\mathfrak{p}}, 1) 
a_{11}^6(1, 2)))$ \\ 
$- \mu (\mu^{-1} (- a_{21}^6({\bf mb}^-_{\mathfrak{p}}, 2) 
+ a_{21}^6({\bf mb}^-_{\mathfrak{p}}, 1) 
a_{11}^6(1, 2)))$ \\ 
$- (\mu^{-1} (- a_{21}^6({\bf mb}^+_{\mathfrak{p}}, 2) 
+ a_{21}^6({\bf mb}^+_{\mathfrak{p}}, 1) 
a_{11}^6(1, 2)))$ \\ 
$+ a_{21}^6({\bf mb}^-_{\mathfrak{p}}, t_{\mathfrak{p}}) 
(\mu^{-1} (- a_{11}^6(t_{\mathfrak{p}}, 2) 
+ a_{11}^6(t_{\mathfrak{p}}, 1) 
a_{11}^6(1, 2)))$ \\ 
$- a_{11}^6(b^{--}_{\mathfrak{p}}, m^-_{\mathfrak{p}}) 
(\mu^{-1} (- a_{21}^6({\bf tm}_{\mathfrak{p}}, 2) 
+ a_{21}^6({\bf tm}_{\mathfrak{p}}, 1) 
a_{11}^6(1, 2)))$ \\ 
$- a_{12}^6(b^{--}_{\mathfrak{p}}, {\bf tm}_{\mathfrak{p}}) 
(\mu^{-1} (- a_{11}^6(m^+_{\mathfrak{p}}, 2) 
+ a_{11}^6(m^+_{\mathfrak{p}}, 1) 
a_{11}^6(1, 2)))$ \\ 
$+ \mu^{-1} a_{11}^6(b^{--}_{\mathfrak{p}}, t_{\mathfrak{p}}) 
a_{12}^6(t_{\mathfrak{p}}, {\bf tm}_{\mathfrak{p}}) 
(\mu^{-1} (- a_{11}^6(m^+_{\mathfrak{p}}, 2) 
+ a_{11}^6(m^+_{\mathfrak{p}}, 1) 
a_{11}^6(1, 2)))$ 
in $(CR^{--6}(D_2), \partial^6)$, \\ 
we obtain 
$\partial^7 (\mu^{-1} (a_{31}^{--7}({\mathfrak{p}}, 0) 
- a_{31}^{--7}({\mathfrak{p}}, 2) 
+ a_{31}^{--7}({\mathfrak{p}}, 1) 
a_{11}^7(1, 2)))$ \\ 
$= \mu (\mu^{-1} (- a_{21}^7({\bf tb}^-_{\mathfrak{p}}, 2) 
+ a_{21}^7({\bf tb}^-_{\mathfrak{p}}, 1) 
a_{11}^7(1, 2)))$ \\ 
$+ (\mu^{-1} (- a_{21}^7({\bf tb}^+_{\mathfrak{p}}, 2) 
+ a_{21}^7({\bf tb}^+_{\mathfrak{p}}, 1) 
a_{11}^7(1, 2)))$ \\ 
$- a_{21}^7({\bf tb}^-_{\mathfrak{p}}, m^+_{\mathfrak{p}}) 
(\mu^{-1} (- a_{11}^7(m^+_{\mathfrak{p}}, 2) 
+ a_{11}^7(m^+_{\mathfrak{p}}, 1) 
a_{11}^7(1, 2)))$ \\ 
$- \mu (\mu^{-1} (- a_{21}^7({\bf mb}^-_{\mathfrak{p}}, 2) 
+ a_{21}^7({\bf mb}^-_{\mathfrak{p}}, 1) 
a_{11}^7(1, 2)))$ \\ 
$- (\mu^{-1} (- a_{21}^7({\bf mb}^+_{\mathfrak{p}}, 2) 
+ a_{21}^7({\bf mb}^+_{\mathfrak{p}}, 1) 
a_{11}^7(1, 2)))$ \\ 
$+ a_{21}^7({\bf mb}^-_{\mathfrak{p}}, t_{\mathfrak{p}}) 
(\mu^{-1} (- a_{11}^7(t_{\mathfrak{p}}, 2) 
+ a_{11}^7(t_{\mathfrak{p}}, 1) 
a_{11}^7(1, 2)))$ \\ 
$- a_{11}^7(b^{--}_{\mathfrak{p}}, m^-_{\mathfrak{p}}) 
(\mu^{-1} (- a_{21}^7({\bf tm}_{\mathfrak{p}}, 2) 
+ a_{21}^7({\bf tm}_{\mathfrak{p}}, 1) 
a_{11}^7(1, 2)))$ \\ 
$- a_{12}^7(b^{--}_{\mathfrak{p}}, {\bf tm}_{\mathfrak{p}}) 
(\mu^{-1} (- a_{11}^7(m^+_{\mathfrak{p}}, 2) 
+ a_{11}^7(m^+_{\mathfrak{p}}, 1) 
a_{11}^7(1, 2)))$ \\ 
$+ \mu^{-1} a_{11}^7(b^{--}_{\mathfrak{p}}, t_{\mathfrak{p}}) 
a_{12}^7(t_{\mathfrak{p}}, {\bf tm}_{\mathfrak{p}}) 
(\mu^{-1} (- a_{11}^7(m^+_{\mathfrak{p}}, 2) 
+ a_{11}^7(m^+_{\mathfrak{p}}, 1) 
a_{11}^7(1, 2)))$ 
in $(CR^{--7}(D_2), \partial^7)$. \\ 
Since we have equations 
$\partial^7 a_{31}^{--7}({\mathfrak{p}}, 1)$ 
$= \mu a_{21}^7({\bf tb}^-_{\mathfrak{p}}, 1)$ 
$+ a_{21}^7({\bf tb}^+_{\mathfrak{p}}, 1)$
$- a_{21}^7({\bf tb}^-_{\mathfrak{p}}, m^+_{\mathfrak{p}}) 
a_{11}^7(m^+_{\mathfrak{p}}, 1)$ \\ 
$- \mu a_{21}^7({\bf mb}^-_{\mathfrak{p}}, 1)$ 
$- a_{21}^7({\bf mb}^+_{\mathfrak{p}}, 1)$
$+ a_{21}^7({\bf mb}^-_{\mathfrak{p}}, t_{\mathfrak{p}}) 
a_{11}^7(t_{\mathfrak{p}}, 1)$ \\ 
$- a_{11}^7(b^{--}_{\mathfrak{p}}, m^-_{\mathfrak{p}}) 
a_{21}^7({\bf tm}_{\mathfrak{p}}, 1)$
$- a_{12}^7(b^{--}_{\mathfrak{p}}, {\bf tm}_{\mathfrak{p}}) 
a_{11}^7(m^+_{\mathfrak{p}}, 1)$ 
$+ \mu^{-1} a_{11}^7(b^{--}_{\mathfrak{p}}, t_{\mathfrak{p}}) 
a_{12}^7(t_{\mathfrak{p}}, {\bf tm}_{\mathfrak{p}}) 
a_{11}^7(m^+_{\mathfrak{p}}, 1)$, \\ 
$\partial^7 a_{31}^{--7}({\mathfrak{p}}, 2)$ 
$= \mu a_{21}^7({\bf tb}^-_{\mathfrak{p}}, 2)$ 
$+ a_{21}^7({\bf tb}^+_{\mathfrak{p}}, 2)$
$- a_{21}^7({\bf tb}^-_{\mathfrak{p}}, m^+_{\mathfrak{p}}) 
a_{11}^7(m^+_{\mathfrak{p}}, 2)$ \\ 
$- \mu a_{21}^7({\bf mb}^-_{\mathfrak{p}}, 2)$ 
$- a_{21}^7({\bf mb}^+_{\mathfrak{p}}, 2)$
$+ a_{21}^7({\bf mb}^-_{\mathfrak{p}}, t_{\mathfrak{p}}) 
a_{11}^7(t_{\mathfrak{p}}, 2)$ \\ 
$- a_{11}^7(b^{--}_{\mathfrak{p}}, m^-_{\mathfrak{p}}) 
a_{21}^7({\bf tm}_{\mathfrak{p}}, 2)$
$- a_{12}^7(b^{--}_{\mathfrak{p}}, {\bf tm}_{\mathfrak{p}}) 
a_{11}^7(m^+_{\mathfrak{p}}, 2)$ 
$+ \mu^{-1} a_{11}^7(b^{--}_{\mathfrak{p}}, t_{\mathfrak{p}}) 
a_{12}^7(t_{\mathfrak{p}}, {\bf tm}_{\mathfrak{p}}) 
a_{11}^7(m^+_{\mathfrak{p}}, 2)$ \\ 
in $(CR^{--7}(D_2), \partial^7)$, 
we obtain  
$\partial a_{31}^{--7}({\mathfrak{p}}, 0) = 0$ 
in $(CR^{--7}(D_2), \partial^7)$. 
We notice equations 
$\partial^7 a_{32}^{--7}({\mathfrak{p}}, {\bf 0})$ 
$= - a_{31}^{--7}({\mathfrak{p}}, 0)$, 
$\partial a_{31}^{--7}({\mathfrak{p}}, 0) = 0$, and 
$\partial^7 a_{23}^{--7}({\bf 0}, {\mathfrak{p}})$ 
$= a_{13}^{--7}(0, {\mathfrak{p}})$, 
$\partial a_{13}^{--7}(0, {\mathfrak{p}}) = 0$ 
in $(CR^{--7}(D_2), \partial^7)$.

We eliminate pairs of generators 
$(a_{32}^{--7}({\mathfrak{p}}, {\bf 0}), a_{31}^{--7}({\mathfrak{p}}, 0))$ 
and 
$(a_{23}^{--7}({\bf 0}, {\mathfrak{p}}), a_{13}^{--7}(0, {\mathfrak{p}}))$ 
by a sequence of destabilizations on $(CR^{--7}(D_2), \partial^7)$ 
for ${\mathfrak{p}} \in \{ {\mathfrak{1}}, \cdots, {\mathfrak{t}} \}$, 
and we obtain $(CR^{--8}(D_2), \partial^8)$.

We define a tame isomorphism 
$\varphi^8 \colon (CR^{--8}(D_2), \partial^8) 
\to (CR^{--9}(D_2), \partial^9)$ 
by \\ 
$\varphi^8 (a_{b1}^8({\textsl{k}}, 0)) 
= \mu^{-1} (a_{b1}^9({\textsl{k}}, 0) 
- a_{b1}^9({\textsl{k}}, 2) 
+ a_{b1}^9({\textsl{k}}, 1) 
a_{11}^9(1, 2))$, \\ 
$\varphi^8 (a_{1b}^8(0, {\textsl{k}})) 
= a_{1b}^9(0, {\textsl{k}}) 
- \mu a_{1b}^9(2, {\textsl{k}}) 
+ a_{11}^9(2, 1) 
a_{1b}^9(1, {\textsl{k}})$, \\ 
$\varphi^8(a_{11}^8(i, j)) = a_{11}^9(i, j)$, 
$\varphi^8(a_{21}^8({\bf w}, i)) = a_{21}^9({\bf w}, i)$, 
$\varphi^8(a_{12}^8(i, {\bf w})) = a_{12}^9(i, {\bf w})$, \\ 
$\varphi^8(a_{22}^8({\bf w}, {\bf z})) = a_{22}^9({\bf w}, {\bf z})$, 
$\varphi^8(a_{2}^8({\bf w})) = a_{2}^9({\bf w})$, \\ 
$\varphi^8(a_{31}^{--8}({\mathfrak{p}}, i)) = a_{31}^{--9}({\mathfrak{p}}, i)$, 
$\varphi^8(a_{13}^{--8}(i, {\mathfrak{p}})) = a_{13}^{--9}(i, {\mathfrak{p}})$, \\ 
$\varphi^8(a_{32}^{--8}({\mathfrak{p}}, {\bf w})) 
= a_{32}^{--9}({\mathfrak{p}}, {\bf w})$, 
$\varphi^8(a_{23}^{--8}({\bf w}, {\mathfrak{p}})) 
= a_{23}^{--9}({\bf w}, {\mathfrak{p}})$, \\ 
$\varphi^8(a_{3}^{--8}({\mathfrak{p}})) = a_{3}^{--9}({\mathfrak{p}})$, 
$\varphi^8(a_{33}^{--8}({\mathfrak{p}}, {\mathfrak{q}})) 
= a_{33}^{--9}({\mathfrak{p}}, {\mathfrak{q}})$, \\ 
$\varphi^8(a_{b1}^8({\textsl{k}}, i)) = a_{b1}^9({\textsl{k}}, i)$, 
$\varphi^8(a_{1b}^8(i, {\textsl{k}})) = a_{1b}^9(i, {\textsl{k}})$, \\ 
$\varphi^8(a_{b2}^8({\textsl{k}}, {\bf x})) 
= a_{b2}^9({\textsl{k}}, {\bf x})$, 
$\varphi^8(a_{2b}^8({\bf x}, {\textsl{k}})) 
= a_{2b}^9({\bf x}, {\textsl{k}})$, \\ 
$\varphi^8(a_{b3}^{--8}({\textsl{k}}, {\mathfrak{p}})) 
= a_{b3}^{--9}({\textsl{k}}, {\mathfrak{p}})$, 
$\varphi^8(a_{3b}^{--8}({\mathfrak{p}}, {\textsl{k}})) 
= a_{3b}^{--9}({\mathfrak{p}}, {\textsl{k}})$, \\ 
$\varphi^8(a_{b}^8({\textsl{k}})) = a_{b}^9({\textsl{k}})$, 
$\varphi^8(a_{bb}^8({\textsl{k}}, {\textsl{l}})) 
= a_{bb}^9({\textsl{k}}, {\textsl{l}})$, \\ 
where $i \neq j \in \{ 1, \cdots, n \}$, 
${\bf x}, {\bf y} \in \{ {\bf 0}, {\bf 1}, \cdots, {\bf m} \}$, 
${\bf w}, {\bf z} \in \{ {\bf 1}, \cdots, {\bf m} \}$, 
${\mathfrak{p}}, {\mathfrak{q}} \in \{ {\mathfrak{1}}, \cdots, {\mathfrak{t}} \}$, 
${\textsl{k}}, {\textsl{l}} \in \{ {\textsl{1}}, \cdots, {\textsl{b}} \}$. 
For example, from the equation \\ 
$\partial^8 a_{b1}^8({\textsl{k}}, 0)$ 
$= \mu^{-1} (- a_{21}^8({\bf dc}_{\textsl{k}}, 2) 
+ a_{21}^8({\bf dc}_{\textsl{k}}, 1) 
a_{11}^8(1, 2))$ 
in $(CR^{--8}(D_2), \partial^8)$, 
we obtain \\ 
$\partial^9 (\mu^{-1} (a_{b1}^9({\textsl{k}}, 0) 
- a_{b1}^9({\textsl{k}}, 2) 
+ a_{b1}^9({\textsl{k}}, 1) 
a_{11}^9(1, 2)))$ 
$= \mu^{-1} (- a_{21}^9({\bf dc}_{\textsl{k}}, 2) 
+ a_{21}^8({\bf dc}_{\textsl{k}}, 1) 
a_{11}^8(1, 2))$ \\ 
in $(CR^{--9}(D_2), \partial^9)$. 
Since we have equations 
$\partial^9 a_{b1}^9({\textsl{k}}, 1)$ 
$= a_{21}^9({\bf dc}_{\textsl{k}}, 1)$ and 
$\partial^9 a_{b1}^9({\textsl{k}}, 2)$ 
$= a_{21}^9({\bf dc}_{\textsl{k}}, 2)$ 
in 
$(CR^{--9}(D_2), \partial^9)$, 
we obtain 
$\partial^9 a_{b1}^9({\textsl{k}}, 0) = 0$ 
in $(CR^{--9}(D_2), \partial^9)$. 
We notice equations 
$\partial^9 a_{b2}^9({\textsl{k}}, {\bf 0})$ 
$= - a_{b1}^9({\textsl{k}}, 0)$, 
$\partial^9 a_{b1}^9({\textsl{k}}, 0) = 0$, and 
$\partial^9 a_{2b}^9({\bf 0}, {\textsl{k}})$ 
$= a_{1b}^9(0, {\textsl{k}})$, 
$\partial^9 a_{1b}^9(0, {\textsl{k}}) = 0$ 
in $(CR^{--9}(D_2), \partial^9)$.

We eliminate pairs of generators 
$(a_{b2}^9({\textsl{k}}, {\bf 0}), a_{b1}^9({\textsl{k}}, 0))$ 
and 
$(a_{2b}^9({\bf 0}, {\textsl{k}}), a_{1b}^9(0, {\textsl{k}}))$ 
by a sequence of destabilizations 
on $(CR^{--9}(D_2), \partial^9)$ 
for ${\textsl{k}} \in \{ {\textsl{1}}, \cdots, {\textsl{b}} \}$, 
and we obtain $(CR^{--0}(D_2), \partial^0)$.

It is straightforward to see that 
$(CR^{--0}(D_2), \partial^0)$ 
is isomorphic to $(CR^{--} (D_1), \partial)$. 
This shows that 
$(CR^{--}(D_2), \partial)$ is stably tame isomorphic to 
$(CR^{--}(D_1), \partial)$. 
\end{proof} 

We say that the series of tame isomorphisms and destabilizations 
performed in the proof of Proposition \ref{roseman-3} 
is a {\it destabilization} {\it along} ${\bf 0} \to 0$, 
from the double curve with label ${\bf 0}$ to the sheet with label $0$, 
on $(CR^{--}(D_2), \partial)$.

\section{Roseman move I}

\begin{proposition} \label{roseman-1} 
Suppose that a diagram $D_2$ of a surface-knot $F$ 
is obtained from a diagram $D_1$ of $F$ 
by applying one Roseman move of type I. 
See Figure \ref{roseman14}. 
Then $(CR^{--} (D_2), \partial)$ is stably tame isomorphic to 
$(CR^{--} (D_1), \partial)$. 
\end{proposition}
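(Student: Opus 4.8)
The plan is to follow the template of the proof of Proposition~\ref{roseman-3}: record the singularities that the move of type~I introduces, write out the differential of $(CR^{--}(D_2), \partial)$ on the enlarged generating set, and then exhibit a sequence of tame isomorphisms followed by destabilizations that cancels the new generators and recovers $(CR^{--}(D_1), \partial)$. The essential difference from move~III is that move~I alters the branch-point set rather than the double-curve set alone: as in Figure~\ref{roseman14}, it creates a new branch point of positive sign, which I label ${\textsl{0}}$, together with the double curve ${\bf 0} = {\bf dc}_{\textsl{0}}$ emanating from it (terminating at its unlabeled negative partner), all supported on the existing sheets. First I would fix the correspondence between the sheets, double curves, triple points, and remaining branch points of $D_2$ and those of $D_1$, exactly as in \S3, so that every generator not carrying a label ${\textsl{0}}$ or ${\bf 0}$ is matched identically.

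The key observation is that the branch-point relations already pair each new generator carrying the double curve ${\bf 0}$ with a generator carrying the branch point ${\textsl{0}}$. The degree-$1$ generators $a_{21}({\bf 0}, i)$ and $a_{12}(i, {\bf 0})$ are exact, equal to $\partial a_{b1}({\textsl{0}}, i)$ and $\partial a_{1b}(i, {\textsl{0}})$; the degree-$2$ generators $a_{22}({\bf 0}, {\bf x})$, $a_{22}({\bf x}, {\bf 0})$, $a_{22}({\bf 0}, {\bf 0})$, and $a_2({\bf 0})$ arise, modulo lower-order $a_{b1}({\textsl{0}}, \cdot)$ and $a_{1b}(\cdot, {\textsl{0}})$ terms, from $\partial a_{b2}({\textsl{0}}, {\bf x})$, $\partial a_{2b}({\bf x}, {\textsl{0}})$, $\partial a_{b2}({\textsl{0}}, {\bf dc}_{\textsl{0}})$, and $\partial a_b({\textsl{0}})$; and the degree-$3$ generators $a_{23}({\bf 0}, {\mathfrak{p}})$, $a_{32}({\mathfrak{p}}, {\bf 0})$, $a_{2b}({\bf 0}, \cdot)$, and $a_{b2}(\cdot, {\bf 0})$ arise from the differentials of $a_{b3}({\textsl{0}}, {\mathfrak{p}})$, $a_{3b}({\mathfrak{p}}, {\textsl{0}})$, $a_{bb}({\textsl{0}}, \cdot)$, and $a_{bb}(\cdot, {\textsl{0}})$. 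Thus every ${\bf 0}$-generator is a boundary of a ${\textsl{0}}$-generator up to lower-order corrections, and no auxiliary sheet is needed. Mirroring \S3, I would define tame isomorphisms, proceeding by increasing degree, that absorb these correction terms so that each pair acquires the clean form $\partial y = \alpha z$ with $\alpha$ a unit, and then delete the pairs by a sequence of destabilizations, leaving a differential graded algebra that one checks directly is isomorphic to $(CR^{--}(D_1), \partial)$.

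The main obstacle is the bookkeeping that already dominates Proposition~\ref{roseman-3}, here made more delicate by the fact that the branch-point generators feed into one another and into the triple-point differentials. One must verify that each $\varphi$ is a graded chain map and propagate its substitutions consistently through the higher differentials $\partial a_{32}, \partial a_{23}, \partial a_3, \partial a_{33}, \partial a_{b3}, \partial a_{3b}, \partial a_{bb}, \partial a_b$. The cancellations must be correctly ordered: since $a_{b1}({\textsl{0}}, \cdot)$ and $a_{1b}(\cdot, {\textsl{0}})$ appear inside $\partial a_{b2}({\textsl{0}}, {\bf x})$, $\partial a_{2b}({\bf x}, {\textsl{0}})$, and $\partial a_b({\textsl{0}})$, the degree-$1$ pairs and their corrections must be cleared before the higher pairs are destabilized. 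A further subtlety is the diagonal generator $a_{22}({\bf 0}, {\bf 0})$, which is hit by both $a_{b2}({\textsl{0}}, {\bf dc}_{\textsl{0}})$ and $a_{2b}({\bf dc}_{\textsl{0}}, {\textsl{0}})$: one of these must be paired with $a_{22}({\bf 0}, {\bf 0})$ while the resulting cycle is paired with $a_{bb}({\textsl{0}}, {\textsl{0}})$, so that no generator is destabilized twice. Finally, the local variants indicated in Figure~\ref{roseman14} are disposed of by the symmetric pair of arguments, exactly as the two cases of Figure~\ref{roseman3} were handled together in \S3.
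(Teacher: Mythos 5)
Your proposal is correct and takes essentially the same route as the paper: the paper's entire proof is a single ``destabilization along ${\textsl{b}} \to {\bf 0}$'' (the template of tame isomorphisms followed by pair cancellations codified after Proposition~\ref{roseman-3}), which is exactly the degree-by-degree pairing of the ${\bf 0}$-generators with the new branch-point generators that you spell out. The details you flag --- absorbing the correction terms, ordering the cancellations by degree, and the $a_{22}({\bf 0}, {\bf 0})$ versus $a_{bb}$ bookkeeping --- are left implicit in the paper but agree with its intended procedure.
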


\begin{figure}
\begin{center}
\includegraphics{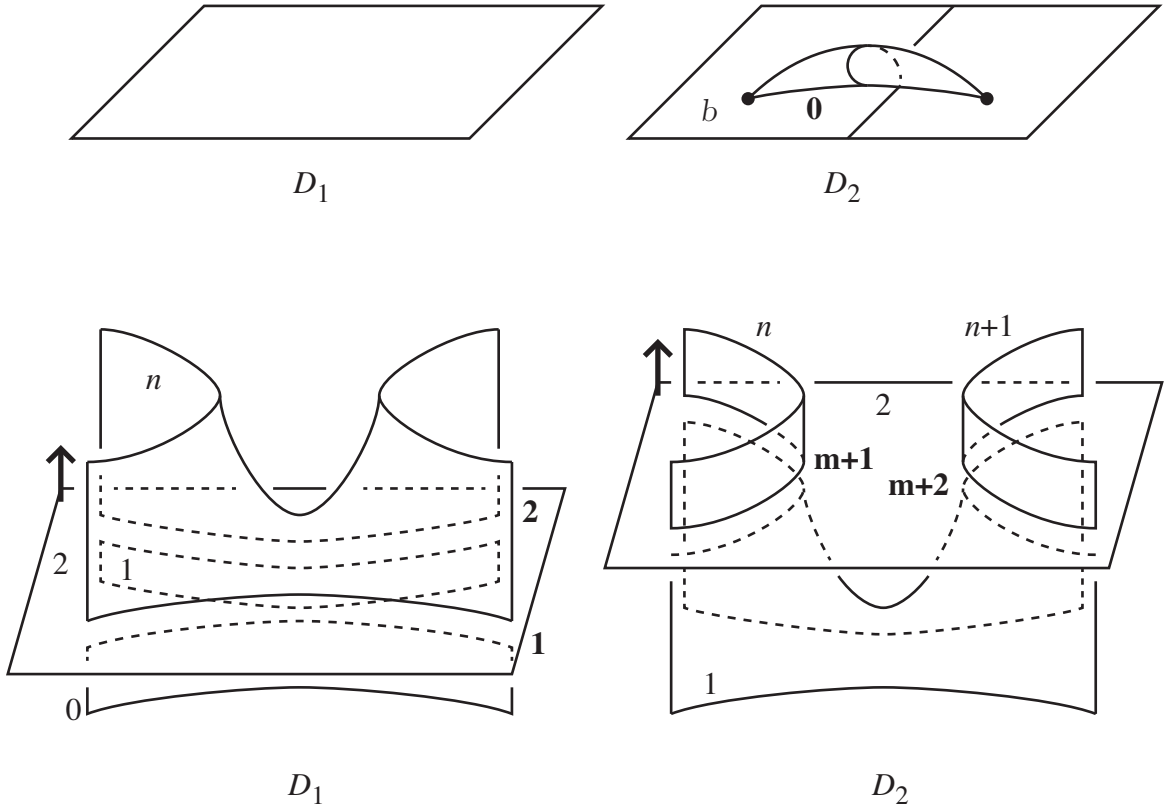}
\caption{Roseman move of type I (upper pair) and 
Roseman move of type IV (lower pair)}
\label{roseman14}
\end{center}
\end{figure}

\begin{proof} 
Let ${\textsl{b}}$ 
denote a label on 
the branch point of positive sign of $D_2$ 
created by the Roseman move, and 
let ${\bf 0}$ denote a label on 
the double curve of $D_2$ 
created by the move. 
See Figure \ref{roseman14}. 
Perform a destabilization 
along ${\textsl{b}} \to {\bf 0}$ on $(CR^{--}(D_2), \partial)$, and 
we obtain a differential graded algebra 
that is isomorphic to $(CR^{--}(D_1), \partial)$. 
This shows that $(CR^{--} (D_2), \partial)$ is stably tame isomorphic to 
$(CR^{--} (D_1), \partial)$. 
\end{proof}

\section{Roseman move IV}

\begin{proposition} \label{roseman-4} 
Suppose that a diagram $D_2$ of a surface-knot $F$ 
is obtained from a diagram $D_1$ of $F$ 
by applying one Roseman move of type IV. 
See Figure \ref{roseman14}. 
Then $(CR^{--} (D_1), \partial)$ 
is stably tame isomorphic to $(CR^{--} (D_2), \partial)$. 
\end{proposition}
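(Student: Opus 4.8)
The plan is to repeat, for the type-IV picture, the mechanism established in Propositions \ref{roseman-3} and \ref{roseman-1}: realize $(CR^{--}(D_1),\partial)$ and $(CR^{--}(D_2),\partial)$ on nested generating sets and cancel the generators that the move creates by composing an explicit tame isomorphism with a sequence of algebraic destabilizations. First I would fix labels so that every sheet, double curve, triple point and positive branch point of $D_1$ keeps its label in $D_2$, and assign fresh labels to the singularities of $D_2$ produced by the move — reading off from the lower pair of Figure \ref{roseman14} the created positive branch point $\textsl{b}$ and the double curve $\mathbf{0}$ that it bounds, together with whatever sheet region and over/under data the move introduces. With these labels fixed, the differential of $(CR^{--}(D_2),\partial)$ on every generator is dictated by the definitions in \S2, and differs from that of $(CR^{--}(D_1),\partial)$ only in the generators that involve $\textsl{b}$ and $\mathbf{0}$.

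The reduction itself I expect to be short, precisely because the type-IV move is a branch-point move and therefore introduces no new triple point; the argument stays inside the two named operations already available. Concretely, I would perform a destabilization along $\textsl{b}\to\mathbf{0}$, in the sense defined after Proposition \ref{roseman-3} and used in Proposition \ref{roseman-1}, to cancel the created branch-point generators $a_{b1},a_{1b},a_{b2},a_{2b},a_{b3}^{--},a_{3b}^{--},a_b$ against the double-curve generators supported on $\mathbf{0}$; if the move additionally subdivides a sheet, an accompanying destabilization along $\mathbf{0}\to 0$ (Proposition \ref{roseman-3}) removes the remaining created sheet and double curve. Each named destabilization is an explicit tame isomorphism $\varphi$, chosen so that the image of each created generator has differential equal to a unit multiple of its lower-degree partner, followed by the corresponding algebraic destabilizations; after performing them the surviving algebra is visibly $(CR^{--}(D_1),\partial)$, which gives the stated stable tame isomorphism.

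The step I expect to be the genuine obstacle is not the cancellation of the diagonal generators but the behaviour of $\varphi$ on the \emph{mixed} generators, those pairing a surviving singularity of $D_1$ with the created branch point or double curve — typically $a_{b2}(\textsl{k},\mathbf{0})$, $a_{2b}(\mathbf{0},\textsl{k})$, $a_{23}^{--}(\mathbf{0},\mathfrak{p})$, $a_{32}^{--}(\mathfrak{p},\mathbf{0})$, $a_{22}(\mathbf{x},\mathbf{0})$ and $a_{22}(\mathbf{0},\mathbf{x})$. For each of these I would have to check, exactly as in the sample computations displayed in the proof of Proposition \ref{roseman-3}, that after applying $\varphi$ the newly created summands telescope and the residual differential matches the formula prescribed for $(CR^{--}(D_1),\partial)$; keeping track of the signs coming from the Leibniz rule and of the powers of $\mu$ attached to the $u^{\pm}$ sheets is where the computation must be done with care. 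Once this bookkeeping is verified the destabilizations are automatic and the proof closes.
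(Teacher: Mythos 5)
Your proposal misidentifies the move. The lower pair of Figure \ref{roseman14} (type IV) is not a branch-point move: no positive branch point ${\textsl{b}}$ is created, so there is no destabilization along ${\textsl{b}}\to{\bf 0}$ to perform, and the list of ``created branch-point generators'' $a_{b1},a_{1b},\dots,a_b$ that you plan to cancel does not exist on either side. In the paper's setup the type IV move is the reconnection (saddle) move on double curves: $D_1$ has two double curves ${\bf 1},{\bf 2}$ and four sheets $0,1,2,n$ involved in the move, while $D_2$ has two double curves ${\bf m+1},{\bf m+2}$ and four sheets $1,2,n,n+1$. Both diagrams carry ``extra'' singularities relative to the other, so your underlying template --- destabilize $D_2$ until what remains is visibly $(CR^{--}(D_1),\partial)$ --- cannot be made to work here: neither algebra sits inside the other as the result of stabilizations.

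The paper's proof instead destabilizes \emph{both} sides to a common intermediate: it performs destabilizations along ${\bf 1}\to 0$ and ${\bf 2}\to n$ on $(CR^{--}(D_1),\partial)$ to get $(CR^{--1}(D_1),\partial^1)$, performs destabilizations along ${\bf m+2}\to n+1$ and ${\bf m+1}\to n$ on $(CR^{--}(D_2),\partial)$ to get $(CR^{--2}(D_2),\partial^2)$, and then identifies these two quotient algebras. Stable tame isomorphism is an equivalence relation, so exhibiting a common destabilization suffices; but your proposal contains no such two-sided reduction, and without it the argument does not close. The only ingredient you would reuse correctly is the ``destabilization along ${\bf x}\to i$'' package from Proposition \ref{roseman-3}; the missing idea is that for type IV it must be applied to $D_1$ as well as to $D_2$, with the isomorphism established between the two destabilized algebras rather than between a destabilization of one and the original of the other.
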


\begin{proof} 
Let $0, 1, 2, n$ denote labels on sheets of $D_1$ 
involved in the Roseman move, 
and let ${\bf 1}, {\bf 2}$ denote labels on double curves of $D_1$ 
involved in the move. 
See Figure \ref{roseman14}. 
Perform destabilizations on $(CR^{--}(D_1), \partial)$ 
along ${\bf 1} \to 0$, and ${\bf 2} \to n$ in this order, 
and we obtain $(CR^{--1}(D_1), \partial^1)$. 

Let $1, 2, n, n+1$ denote labels on sheets of $D_2$ 
involved in the move, 
and let ${\bf m+1}, {\bf m+2}$ denote labels on double curves of $D_2$ 
involved in the move. 
Perform destabilizations on $(CR^{--}(D_2), \partial)$ 
along ${\bf m+2} \to n+1$, and ${\bf m+1} \to n$ in this order, 
and we obtain $(CR^{--2}(D_2), \partial^2)$. 

It is straightforward to see that 
$(CR^{--1}(D_1), \partial^1)$ is isomorphic to 
$(CR^{--2}(D_2), \partial^2)$. 
This shows that 
$(CR^{--}(D_1), \partial)$ is stably tame isomorphic to 
$(CR^{--}(D_2), \partial)$. 
\end{proof}

\section{Roseman move V}

\begin{proposition} \label{roseman-5} 
Suppose that a diagram $D_2$ of a surface-knot $F$ 
is obtained from a diagram $D_1$ of $F$ 
by applying one Roseman move of type V. 
See Figure \ref{roseman5}. 
Then $(CR^{--} (D_2), \partial)$ 
is stably tame isomorphic to $(CR^{--} (D_1), \partial)$. 
\end{proposition}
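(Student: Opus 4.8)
The plan is to follow the destabilization paradigm established in the proof of Proposition~\ref{roseman-3}. First I would read off from Figure~\ref{roseman5} which sheets, double curves, triple points, and branch points of $D_1$ and of $D_2$ take part in the type-V move, choosing labels so that the two diagrams are labelled identically outside the local picture. Because a type-V move is supported in a ball, the differential graded algebras $(CR^{--}(D_1), \partial)$ and $(CR^{--}(D_2), \partial)$ agree on every generator whose labels avoid the local picture; they differ only through the generators attached to the double curves, sheets, and possible branch point that the move creates or destroys.

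The main step is then to cancel these extra generators on each side using the \emph{destabilization along} ${\bf 0} \to 0$ operation of Proposition~\ref{roseman-3}, together with the branch-point destabilization along ${\textsl{b}} \to {\bf 0}$ of Proposition~\ref{roseman-1}. For each double curve ${\bf 0}$ and sheet $0$ produced by the move I would perform a destabilization along ${\bf 0} \to 0$, cancelling the pairs $(a_{21}({\bf 0}, \ell), a_{11}(0, \ell))$ and $(a_{12}(\ell, {\bf 0}), a_{11}(\ell, 0))$ together with their degree-raised analogues exactly as in \S3; for each new branch point I would likewise destabilize along ${\textsl{b}} \to {\bf 0}$ as in \S4. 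Carrying out the appropriate sequence on both $D_1$ and $D_2$, in an order that always presents a cancelling pair $\partial x = \pm\, y + (\text{higher order})$, $\partial y = 0$ in the lowest available degree, reduces the two algebras to $(CR^{--1}(D_1), \partial^1)$ and $(CR^{--2}(D_2), \partial^2)$.

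It then remains to match the surviving generators by the identity correspondence of the sheets and double curves lying outside the local picture. Since a type-V move leaves the global height ordering of the unaffected sheets intact, this matching is a graded algebra isomorphism intertwining $\partial^1$ and $\partial^2$, so $(CR^{--}(D_1), \partial)$ is stably tame isomorphic to $(CR^{--}(D_2), \partial)$.

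I expect the main obstacle to be arranging a valid order for the destabilizations near the singular strata that the new double curves abut. When ${\bf 0}$ lies adjacent to a triple point or branch point, the relevant differentials are the long expressions for $\partial a_{32}^{--}$, $\partial a_{23}^{--}$, $\partial a_{b3}^{--}$, $\partial a_{3b}^{--}$, whose $\mu^{-1}$-terms and products must be shown to linearize under the preliminary tame isomorphism before the corresponding pair can be cancelled; one must also check that the special branch-point relations $\partial a_{b2}({\textsl{k}}, {\bf dc}_{\textsl{k}}) = a_{22}({\bf dc}_{\textsl{k}}, {\bf dc}_{\textsl{k}})$ and $\partial a_{2b}({\bf dc}_{\textsl{k}}, {\textsl{k}}) = -\, a_{22}({\bf dc}_{\textsl{k}}, {\bf dc}_{\textsl{k}})$ do not obstruct the cancellation. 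Once this ordering is secured the concluding isomorphism is routine.
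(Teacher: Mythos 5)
Your proposal has a genuine gap: it never accounts for the generators attached to the \emph{triple points} that a type-V move creates, and these cannot be removed by the two kinds of destabilization you invoke. A Roseman move of type V produces two new triple points (the paper labels them ${\mathfrak{t+1}}$ and ${\mathfrak{t+2}}$ in $D_2$), together with several new double curves and sheets, and it involves no branch points at all. Consequently $CR^{--}(D_2)$ carries extra generators $a_{31}^{--}({\mathfrak{t+1}}, \cdot)$, $a_{13}^{--}(\cdot, {\mathfrak{t+1}})$, $a_{32}^{--}({\mathfrak{t+1}}, \cdot)$, $a_{23}^{--}(\cdot, {\mathfrak{t+1}})$, $a_{3}^{--}({\mathfrak{t+1}})$, $a_{33}^{--}$, $a_{b3}^{--}$, $a_{3b}^{--}$ (and likewise for ${\mathfrak{t+2}}$) in degrees $2$, $3$ and $4$ that have no counterpart in $CR^{--}(D_1)$. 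The destabilization along ${\bf 0} \to 0$ of Proposition \ref{roseman-3} only cancels double-curve generators against sheet generators, and the destabilization along ${\textsl{k}} \to {\bf 0}$ of Proposition \ref{roseman-1} only cancels branch-point generators against double-curve generators; neither touches the triple-point generators, so after your proposed sequence the two algebras still differ and the concluding ``identity matching'' fails. The essential new ingredient in the paper's argument is a third kind of destabilization, along a triple point toward an adjacent double curve (e.g.\ ${\mathfrak{t+1}} \to {\bf m+1}$): after a preliminary tame isomorphism one arranges $\partial a_{31}^{--}({\mathfrak{t+1}}, i) = \pm a_{21}({\bf m+1}, i)$ and so on, using the fact that ${\bf m+1}$ is one of the five double curves abutting ${\mathfrak{t+1}}$ and appears linearly with unit coefficient in the differentials $\partial a_{31}^{--}$, $\partial a_{13}^{--}$, $\partial a_{3}^{--}$, etc., and then cancels the resulting pairs. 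The paper's proof interleaves these with the double-curve-to-sheet destabilizations in a specific order (${\bf m+2} \to n+1$, ${\mathfrak{t+1}} \to {\bf m+1}$, ${\bf m+3} \to n+2$, ${\mathfrak{t+2}} \to {\bf m+4}$), all performed on $D_2$ alone, reducing it directly to an algebra isomorphic to $CR^{--}(D_1)$.

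Two smaller points. First, your repeated appeal to branch-point destabilizations is vacuous here, since a type-V move neither creates nor destroys branch points; flagging ``possible branch points'' suggests a misreading of the local model. Second, the paper reduces to one of four configurations determined by the positive normals of the top and middle sheets and treats the rest as analogous; your write-up should at least record that case division, since the labels of the cancelling partners (which of ${\bf tb}^{\pm}$, ${\bf mb}^{\pm}$, ${\bf tm}$ pairs with each new triple point) depend on it.
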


\begin{figure} 
\begin{center} 
\includegraphics{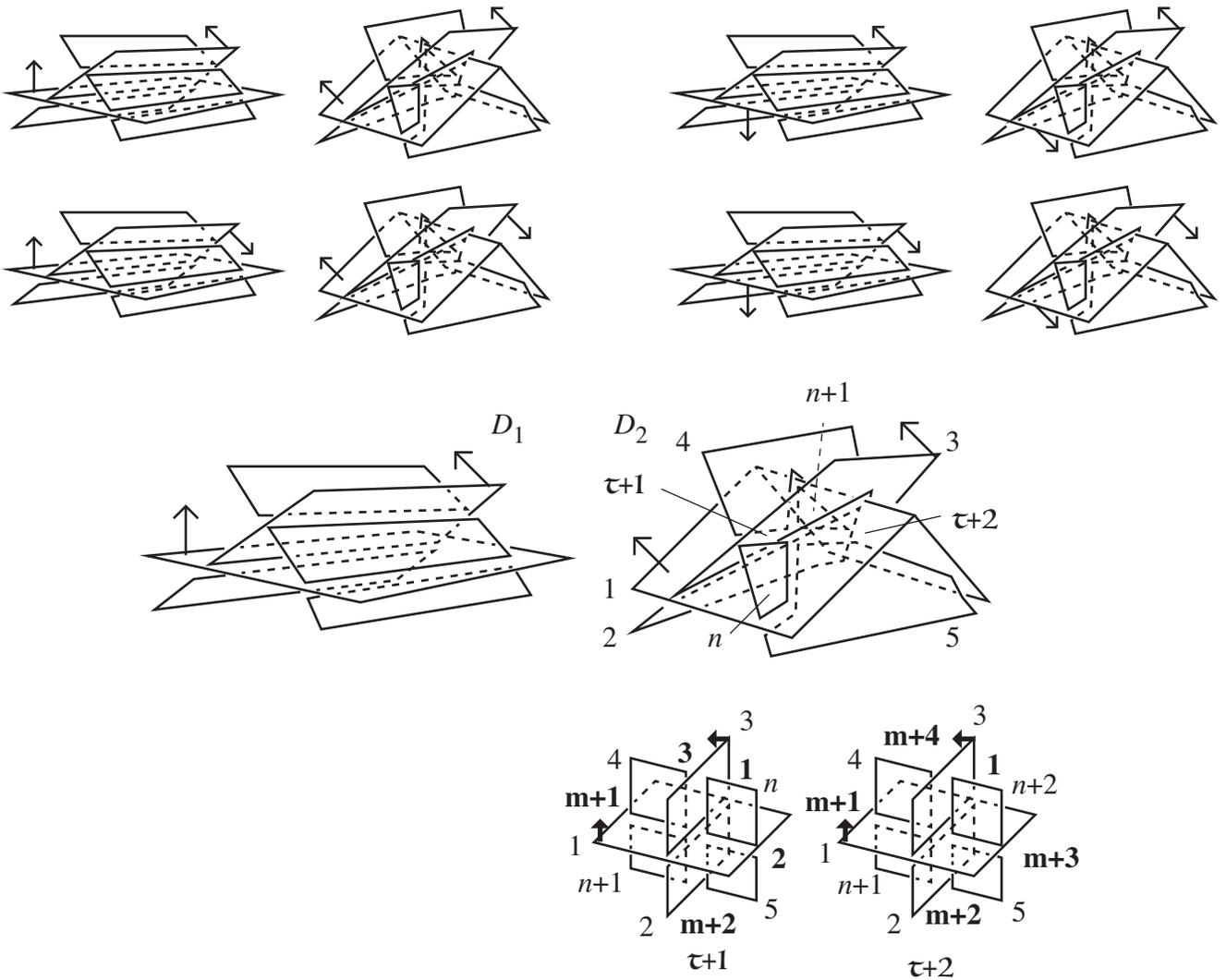} 
\caption{Roseman move of type V} 
\label{roseman5} 
\end{center} 
\end{figure}

\begin{proof} 
Three pieces of $F$ 
with three different heights, 
top, middle and bottom, are involved in 
the Roseman move. 
We suppose that 
the top sheet looks moving, and that 
the middle and the bottom sheets look fixed. 
Considering directions of positive normals 
to the top sheet and the middle sheets, 
there are four cases to study, as illustrated in the upper two rows of 
Figure \ref{roseman5}. 
We study the case illustrated in the lower two rows of Figure \ref{roseman5}. 
Other cases are studied similarly.

Let ${\mathfrak{t+1}}$ and ${\mathfrak{t+2}}$ 
denote labels on triple points of $D_2$ created by the move. 
See Figure \ref{roseman5}. 
Let ${\bf 1}, {\bf 2}, {\bf 3}$, ${\bf m+1}, {\bf m+2}, {\bf m+3}, {\bf m+4}$ 
denote labels on double curves of $D_2$ 
involved in the move, and 
let $1, 2, 3, 4, 5, n, n+1, n+2$ denote 
labels on sheets of $D_2$ 
involved in the move. 
Perform a sequence of destabilizations on $(CR^{--} (D_2), \partial)$ 
along 
${\bf m+2} \to n+1$, 
${\mathfrak{t+1}} \to {\bf m+1}$, 
${\bf m+3} \to n+2$, 
and 
${\mathfrak{t+2}} \to {\bf m+4}$ in this order, and 
we obtain a differential graded algebra 
that is isomorphic to $(CR^{--} (D_1), \partial)$. 
This shows that 
$(CR^{--}(D_2), \partial)$ is stably tame isomorphic to 
$(CR^{--}(D_1), \partial)$. 
\end{proof}

\section{Roseman move VI}

\begin{proposition} \label{roseman-6-1} 
Suppose that a diagram $D_2$ of a surface-knot $F$ 
is obtained from a diagram $D_1$ of $F$ 
by applying one Roseman move of type VI. 
Suppose also that the branch point involved in the Roseman move 
is of positive sign. 
See pairs in the upper two rows of Figure \ref{roseman6}. 
Then $(CR^{--} (D_1), \partial)$ 
is stably tame isomorphic to $(CR^{--} (D_2), \partial)$. 
\end{proposition}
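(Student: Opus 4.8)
The plan is to imitate the proofs of Propositions \ref{roseman-4} and \ref{roseman-5}: I would locate the strata of $D_1$ and $D_2$ that the move alters, cancel the extra generators by the destabilizations introduced in Proposition \ref{roseman-3}, and check that the two reduced differential graded algebras agree. The geometric input is that, when the positive branch point is pushed across the transverse sheet, the branch point itself survives but the double curve emanating from it picks up one transverse intersection with that sheet. Hence, compared with $D_1$, the diagram $D_2$ carries one extra triple point, say ${\mathfrak{t+1}}$, together with the new double curves and new sheet region that appear where this double curve crosses the sheet. Since the branch point persists, I would \emph{not} destabilize it; instead I would destabilize only the triple point and the double curves and sheets created by the move.

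First I would fix labels on the sheets $1, 2, \cdots$, the double curves ${\bf 1}, {\bf 2}, \cdots$, the branch point ${\textsl{b}}$, and the new triple point ${\mathfrak{t+1}}$ as in the upper two rows of Figure \ref{roseman6}. A key feature to record is that the two local sheets of $D_2$ meeting along the double curve from ${\textsl{b}}$ are parts of the single sheet $sh_{\textsl{b}}$, so around ${\mathfrak{t+1}}$ several of the labels $t_{\mathfrak{t+1}}, m^{\pm}_{\mathfrak{t+1}}, b^{\alpha\beta}_{\mathfrak{t+1}}$ coincide with either $sh_{\textsl{b}}$ or the transverse sheet. Then I would perform a sequence of destabilizations on $(CR^{--}(D_2), \partial)$, ordered exactly as in Proposition \ref{roseman-5} so that each step is generic: first destabilize the new double curves against the new sheets, which rewrites the differentials $\partial a^{--}_{31}({\mathfrak{t+1}}, \cdot)$ and $\partial a^{--}_{13}(\cdot, {\mathfrak{t+1}})$ so that a single double curve of ${\mathfrak{t+1}}$ survives as the leading term, and then destabilize the triple point against that double curve (a destabilization along ${\mathfrak{t+1}} \to {\bf x}$ for a suitable new double curve ${\bf x}$, in the sense in which ${\mathfrak{t+1}} \to {\bf m+1}$ was eliminated in Proposition \ref{roseman-5}).

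The hard part will be the bookkeeping of the mixed terms in the triple-point differentials. Expressions such as $\partial a^{--}_{3}({\mathfrak{t+1}})$, $\partial a^{--}_{32}({\mathfrak{t+1}}, {\bf x})$ and $\partial a^{--}_{33}({\mathfrak{t+1}}, {\mathfrak{q}})$ carry many products with the coefficient $\mu^{-1}$ and the factors $a_{11}(b^{--}_{\mathfrak{t+1}}, t_{\mathfrak{t+1}})$ and $a_{12}(b^{--}_{\mathfrak{t+1}}, {\bf tm}_{\mathfrak{t+1}})$, and I must check that, once the preparatory double-curve-to-sheet destabilizations have been made and the branch-point identifications of $t, m^{\pm}, b^{\alpha\beta}$ with $sh_{\textsl{b}}$ are imposed, these terms are either killed or absorbed by a preliminary elementary isomorphism of the form written out in Proposition \ref{roseman-3}, leaving the clean leading term the triple-point destabilization requires. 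It is precisely the positivity of the branch point that fixes these height identifications and keeps the relevant coefficients units, so that for instance $\mu a_{b1}({\textsl{b}}, sh_{\textsl{b}})$ in $\partial a_{b}({\textsl{b}})$ has a unit coefficient; the negative-sign case will be deferred to a separate proposition, where the same scheme applies after reversing the local normal directions. Granting this, the reduced algebra is visibly isomorphic to $(CR^{--}(D_1), \partial)$, which proves that $(CR^{--}(D_1), \partial)$ is stably tame isomorphic to $(CR^{--}(D_2), \partial)$.
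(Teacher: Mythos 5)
Your overall strategy is the right one and matches the paper's in spirit: the branch point survives, one new triple point together with new double curves and one new sheet is created, and these are to be removed by the destabilizations of Proposition \ref{roseman-3}. The paper's cancellation of the genuinely new strata is exactly as you predict: a destabilization along ${\bf m+2} \to n+1$ (new double curve against new sheet) and one along ${\mathfrak{0}} \to {\bf m+1}$ (new triple point against the other new double curve). But there is a gap in your final step. You claim that after cancelling only the strata \emph{created} by the move, the reduced algebra ``is visibly isomorphic to $(CR^{--}(D_1), \partial)$.'' It is not, and the paper does not argue this way. The move does not merely add strata to $D_2$; it also subdivides pre-existing strata --- the double curve emanating from the branch point is cut at the new triple point, and the under-sheet is cut by the new intersection curve --- so the surviving generators of $D_2$ labelled by ${\bf 1}$ and $2$ do not correspond, with matching differentials, to the generators of $D_1$ with those labels. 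The paper therefore destabilizes along the \emph{pre-existing} pair ${\bf 1} \to 2$ on both diagrams: on $D_2$ the order is ${\bf m+2} \to n+1$, then ${\bf 1} \to 2$, then ${\mathfrak{0}} \to {\bf m+1}$, yielding $(CR^{--2}(D_2), \partial^2)$, while on $D_1$ a single destabilization along ${\bf 1} \to 2$ yields $(CR^{--1}(D_1), \partial^1)$, and the comparison is between these two reduced algebras. So the correct model here is Propositions \ref{roseman-4} and \ref{roseman-7} (reduce both sides to a common third DGA), not Proposition \ref{roseman-5} (reduce $D_2$ all the way down to $D_1$), and the interleaved position of ${\bf 1} \to 2$ is needed to put the triple-point differential into the form required for the last destabilization. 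As written, your plan stalls at the final identification.

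A smaller point: your concern that the positivity of the branch point is what ``keeps the relevant coefficients units,'' e.g.\ in $\mu a_{b1}({\textsl{b}}, sh_{\textsl{b}})$, is misplaced --- the branch-point generators are untouched by this move. The sign hypothesis matters because only branch points of positive sign carry generators at all, and because it fixes which of the four local configurations in the upper two rows of Figure \ref{roseman6} one is reducing; the negative case is handled separately in Proposition \ref{roseman-6-2}, as you anticipate.
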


\begin{figure}
\begin{center}
\includegraphics{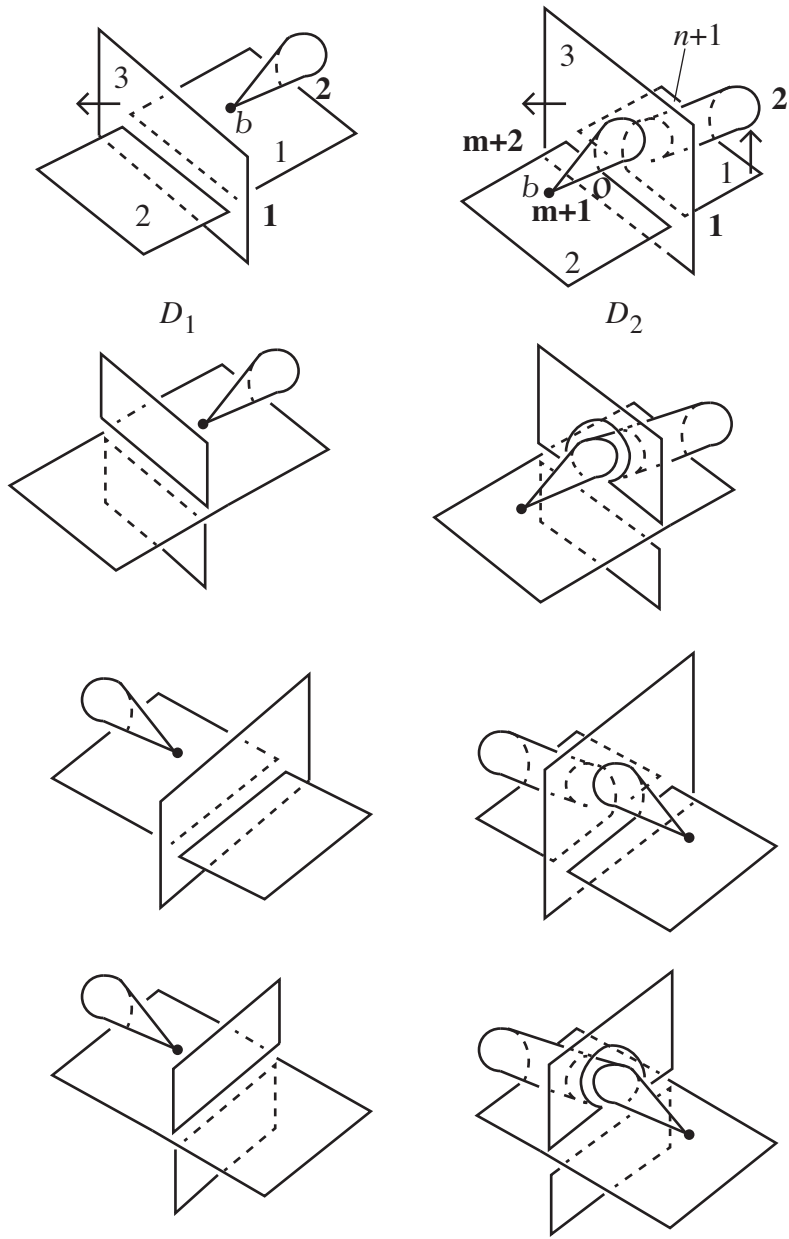}
\caption{Roseman move of type VI}
\label{roseman6}
\end{center}
\end{figure}

\begin{proof} 
There are two cases to study: 
the sheet containing the branch point of positive sign is 
either the under-sheet, as illustrated in the first row of 
Figure \ref{roseman6}, 
or the over-sheet, as illustrated in the second row of 
Figure \ref{roseman6}. 
In each case, 
considering the direction of the positive normal to the over-sheet, 
there are two cases to study. 
We study the case illustrated the first row in Figure \ref{roseman6}. 
Other cases are studied similarly. 

Let ${\mathfrak{0}}$ denote a label on the triple point of $D_2$ 
created by the Roseman move. 
See Figure \ref{roseman6}. 
Let ${\textsl{b}}$ denote a label on 
the branch point of positive sign of $D_2$ 
involved in the move. 
Let ${\bf 1}, {\bf 2}, {\bf m+1}, {\bf m+2}$ 
denote labels on double curves of $D_2$ involved in the move, and 
let $1, 2, 3, n+1$ denote labels on sheets of $D_2$ 
involved in the move. 
Perform a sequence of destabilizations on $(CR^{--} (D_2), \partial)$ 
along 
${\bf m+2} \to n+1$, ${\bf 1} \to 2$, 
and 
${\mathfrak{0}} \to {\bf m+1}$ in this order, and 
we obtain 
$(CR^{--2} (D_2), \partial^2)$.

Let ${\textsl{b}}$ denote a label on 
the branch point of positive sign of $D_1$ 
involved in the move. 
Let ${\bf 1}, {\bf 2}$ denote labels on double curves of $D_1$ 
involved in the move, and 
let $1, 2, 3$ denote labels on sheets of $D_1$ 
involved in the move. 
Perform a destabilization on $(CR^{--} (D_1), \partial)$ 
along ${\bf 1} \to 2$, and 
we obtain $(CR^{--1} (D_1), \partial^1)$. 

It is straightforward to see that 
$(CR^{--1} (D_1), \partial^1)$ is 
isomorphic to $(CR^{--2} (D_2), \partial^2)$. 
This shows that 
$(CR^{--}(D_1), \partial)$ is stably tame isomorphic to 
$(CR^{--}(D_2), \partial)$. 
\end{proof} 

Similar arguments as above prove the following proposition.  

\begin{proposition} \label{roseman-6-2} 
Suppose that a diagram $D_2$ of a surface-knot $F$ 
is obtained from a diagram $D_1$ of $F$ 
by applying one Roseman move of type VI. 
Suppose also that the branch point involved in the Roseman move 
is of negative sign. 
See pairs in the lower two rows of Figure \ref{roseman6}. 
Then $(CR^{--} (D_1), \partial)$ 
is stably tame isomorphic to $(CR^{--} (D_2), \partial)$. 
\end{proposition}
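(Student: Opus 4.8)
The plan is to mirror the proof of Proposition \ref{roseman-6-1} step for step, the only change being that the branch point taking part in the move now carries negative rather than positive sign. As in that proof, four configurations must be examined, according to whether the sheet containing the branch point is the over-sheet or an under-sheet and according to the direction of the positive normal to the over-sheet; these are the pairs shown in the lower two rows of Figure \ref{roseman6}. I would carry out one representative configuration in full and note that the other three follow from identical manipulations.

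The structural simplification in the present case is that a negative-sign branch point contributes none of the generators $a_{b1}, a_{1b}, a_{b2}, a_{2b}, a_b, a_{b3}, a_{3b}, a_{bb}$ to $CR^{--}(D)$, so that the only generators created by the move are those attached to the new triple point, the new double curves, and the sheets near the move. First I would let ${\mathfrak{0}}$ denote the triple point of $D_2$ produced by the move and fix labels ${\bf m+1}, {\bf m+2}$ for the new double curves and $1, 2, 3, n+1$ for the sheets involved, paralleling Proposition \ref{roseman-6-1}. I would then perform, on $(CR^{--}(D_2), \partial)$, the analogous sequence of destabilizations along ${\bf m+2} \to n+1$, then ${\bf 1} \to 2$, then ${\mathfrak{0}} \to {\bf m+1}$, arriving at $(CR^{--2}(D_2), \partial^2)$; and, on $(CR^{--}(D_1), \partial)$, the single matching destabilization along ${\bf 1} \to 2$, arriving at $(CR^{--1}(D_1), \partial^1)$.

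The final step is to check that $(CR^{--1}(D_1), \partial^1)$ and $(CR^{--2}(D_2), \partial^2)$ are isomorphic as differential graded algebras. This is where the argument must be reworked rather than transcribed: because the branch point has negative sign, the orientation data surrounding the created triple point ${\mathfrak{0}}$ is reversed relative to the positive case, interchanging the $+$ and $-$ decorations on the neighbouring sheets and double curves and hence the coefficients $\mu$ and $1$ that appear in the differentials $\partial a_{31}^{--}({\mathfrak{0}}, \cdot)$, $\partial a_{13}^{--}(\cdot, {\mathfrak{0}})$ and on the higher generators attached to ${\mathfrak{0}}$. I would therefore write these differentials out afresh from the definitions of $\S$2 before justifying the destabilization along ${\mathfrak{0}} \to {\bf m+1}$. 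The main obstacle is exactly this orientation-sensitive bookkeeping; once the relevant differentials are tabulated, the required isomorphism with $D_1$ should, as in Proposition \ref{roseman-6-1}, be immediate.
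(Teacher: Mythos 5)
Your proposal is correct and takes essentially the same route as the paper, whose entire proof of this proposition is the remark that ``similar arguments as above'' (i.e.\ the proof of Proposition \ref{roseman-6-1}) apply; you mirror that proof with the same destabilization sequence ${\bf m+2} \to n+1$, ${\bf 1} \to 2$, ${\mathfrak{0}} \to {\bf m+1}$ on $D_2$ and ${\bf 1} \to 2$ on $D_1$. Your added observations --- that a negative-sign branch point contributes no generators of the form $a_{b1}, a_{1b}, \dots$, and that the $+/-$ decorations near the new triple point must be recomputed --- are accurate and in fact supply more detail than the paper does.
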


\section{Roseman move VII}

\begin{proposition} \label{roseman-7} 
Suppose that a diagram $D_2$ of a surface-knot $F$ 
is obtained from a diagram $D_1$ of $F$ 
by applying one Roseman move of type VII. 
See Figure \ref{roseman7}. 
Then $(CR^{--} (D_1), \partial)$ 
is stably tame isomorphic to $(CR^{--} (D_2), \partial)$. 
\end{proposition}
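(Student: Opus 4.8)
The plan is to treat Roseman move VII as the surface analog of the Reidemeister-III move: it is a \emph{tetrahedral} (quadruple-point) move, in which a sheet is pushed across a triple point, so that the four sheets, the six double curves, and the four triple points appearing in the move region of $D_1$ are in bijective correspondence with those of $D_2$, with no singular stratum created or destroyed. First I would fix labels $1,2,3,4$ for the four sheets (with the ambient labels $5,\dots,n$ for the rest of the diagram), labels for the six double curves, and labels ${\mathfrak p}_1,\dots,{\mathfrak p}_4$ for the four triple points, and record the correspondence between the strata of $D_1$ and those of $D_2$. The crucial observation is that the move only slides the sheets horizontally through the quadruple point and does not alter their heights, so the over/under relation on each double curve and the top/middle/bottom designation and normal-sign data $(t_{\mathfrak p}, m^\pm_{\mathfrak p}, b_{\mathfrak p}^{\alpha\beta})$ at each triple point are preserved. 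Consequently every generator not attached to the four special triple points---in particular all $a_{11}$, $a_{21}$, $a_{12}$, $a_{22}$, $a_2$ and all branch-point generators---has \emph{identical} differential in $(CR^{--}(D_1),\partial)$ and $(CR^{--}(D_2),\partial)$.

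The only discrepancy is therefore confined to the triple-point generators $a_{31}^{--}$, $a_{13}^{--}$, $a_{32}^{--}$, $a_{23}^{--}$, $a_3^{--}$ and $a_{33}^{--}$. What the move changes is the adjacency of the four triple points along the six double curves, that is, which triple point occupies which end of each double arc near the quadruple point; this re-assigns the double-curve labels ${\bf tm}_{\mathfrak p}$, ${\bf mb}^\pm_{\mathfrak p}$, ${\bf tb}^\pm_{\mathfrak p}$ attached to each ${\mathfrak p}$ and thereby alters the differentials of the generators above. The plan is then to build a tame isomorphism $\varphi\colon(CR^{--}(D_1),\partial)\to(CR^{--}(D_2),\partial)$ that is the identity on all low-degree and branch-point generators and modifies each triple-point generator by a correction term that is a ${\mathbb Z}[\mu,\mu^{-1}]$-linear combination of the analogous generators at the same triple point composed with degree-$0$ factors $a_{11}$ among the four involved sheets---exactly the change of variables used for $a_{11}(0,i)$ and $a_{31}^{--}({\mathfrak p},0)$ in the proof of Proposition \ref{roseman-3}.

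I would carry this out by ascending degree. First match the degree-$2$ generators $a_{31}^{--}$ and $a_{13}^{--}$ by an elementary isomorphism absorbing the difference of the two differentials into the appropriate target generator; then the degree-$3$ generators $a_{32}^{--}$, $a_{23}^{--}$, $a_3^{--}$; and finally the degree-$4$ generator $a_{33}^{--}$. Because the low-degree differentials already agree and $\partial\circ\partial=0$ holds on both sides, at each stage the required correction term is forced and the partial isomorphism extends consistently. If the local figure for the move also displays auxiliary strata on the boundary of the move region (an extra double arc or a boundary triple point on one side), I would first remove these from both $D_1$ and $D_2$ by destabilizations along them, as in Propositions \ref{roseman-5} and \ref{roseman-6-1}, reducing to the core rearrangement before applying $\varphi$.

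The hard part will be verifying the chain-map condition on the top-degree generator $a_{33}^{--}({\mathfrak p},{\mathfrak q})$ and on $a_3^{--}({\mathfrak p})$, whose differentials are the longest in the entire definition and which reference all of the reshuffled double curves ${\bf tm}$, ${\bf mb}^\pm$, ${\bf tb}^\pm$ simultaneously. Checking that the chosen correction terms make $\varphi$ intertwine the two differentials reduces to a long but purely mechanical noncommutative identity, provable using only $\partial\circ\partial=0$ and the preserved degree-$0$ and degree-$1$ relations among the four sheets; this identity is precisely the algebraic shadow of the quadruple-point relation and carries all the genuine content of the statement, in the same way that the Reidemeister-III verification is the crux of the invariance proof for Ng's knot DGA.
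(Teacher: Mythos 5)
Your proposal rests on the claim that the four sheets, six double curves and four triple points in the move region of $D_1$ correspond bijectively to those of $D_2$ with no stratum created or destroyed, so that every generator not attached to the four triple points has identical differential on both sides and the whole discrepancy can be absorbed into a tame isomorphism correcting only $a_{31}^{--},a_{13}^{--},a_{32}^{--},a_{23}^{--},a_{3}^{--},a_{33}^{--}$. That premise is false for the combinatorial data the DGA actually uses. Sheets are connected components of the broken surface and double curves are connected components of the double-point set after being severed at triple points (each triple point splits ${\bf mb}$ and ${\bf tb}$ into $\pm$ halves). When the bottom sheet is pushed through the triple point, the small triangular region of the bottom sheet dies on one side and is reborn on the other, and three of the double-curve components are reconnected differently; in the paper's labelling, $D_1$ has a sheet $0$ and double curves ${\bf 7},{\bf 8},{\bf 9}$ with no counterparts in $D_2$, which instead has a sheet $9$ and double curves ${\bf x},{\bf y},{\bf z}$. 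Consequently even the degree-$0$ and degree-$1$ generators do not match up, and the adjacency data $u^{\pm}_{\bf x},o_{\bf x}$ of persisting double curves changes, so a tame isomorphism that is the identity on all low-degree generators cannot intertwine the two differentials. These are not ``auxiliary strata on the boundary'' that can be brushed aside by a preliminary cleanup; they are the core of the combinatorial change.

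The paper's proof is structured quite differently: it performs six destabilizations on \emph{each} of $(CR^{--}(D_1),\partial)$ and $(CR^{--}(D_2),\partial)$ --- pairing the transient sheet with a double curve, and, crucially, pairing \emph{three of the four triple points} with double-curve components (e.g.\ ${\mathfrak 1}\to{\bf 8}$, ${\mathfrak 3}\to{\bf 7}$, ${\mathfrak 2}\to{\bf 2}$ on the $D_1$ side) --- so that both algebras collapse to a common core containing only the surviving triple point ${\mathfrak 0}$, after which they are isomorphic on the nose. No degree-by-degree correction of triple-point generators is needed because most of those generators are cancelled outright. Your plan would have to be reorganized along these lines; as written, the elementary isomorphisms you describe in degrees $2$ through $4$ cannot exist because the chain-map condition already fails in degrees $0$ and $1$.
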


\begin{figure}
\begin{center}
\includegraphics{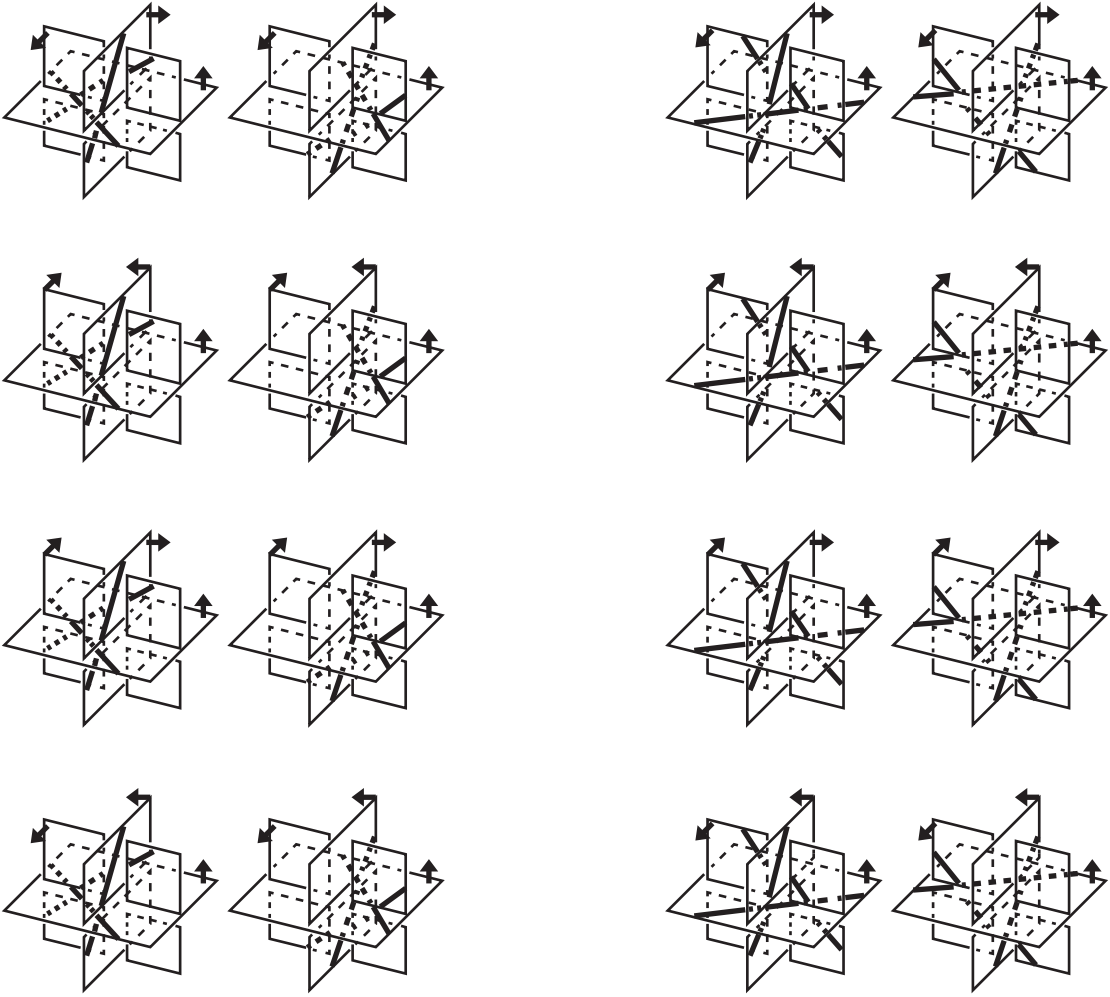}
\caption{Roseman move of type VII}
\label{roseman7-21}
\end{center}
\end{figure}

\begin{figure}
\begin{center}
\includegraphics{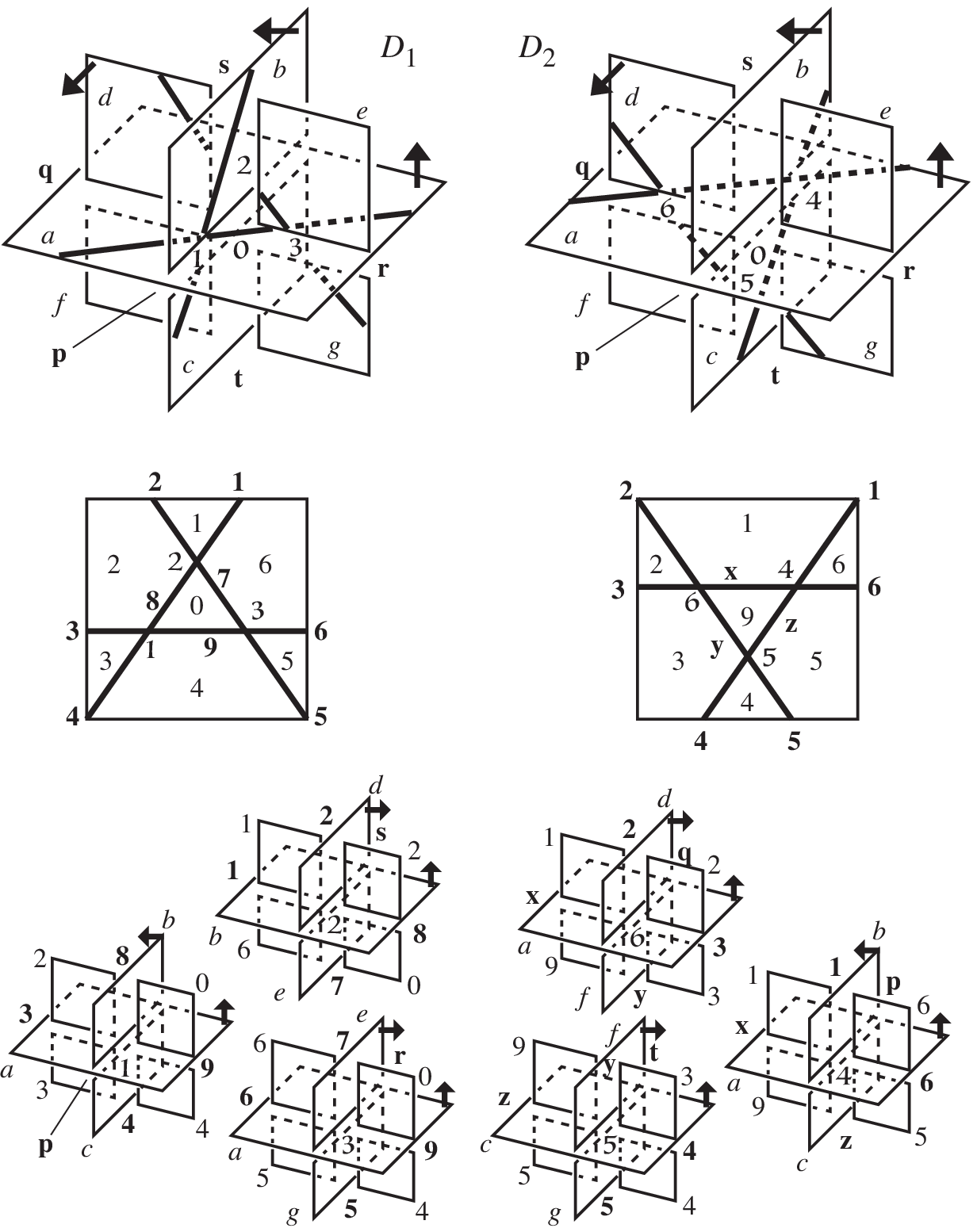}
\caption{Roseman move of type VII: 
labels around triple point labeled ${\mathfrak{0}}$ (top row), 
labels on bottom sheet (middle row), and 
labels around other triple points (bottom row)}
\label{roseman7}
\end{center}
\end{figure}

\begin{proof} 
Four pieces of $F$ with four different heights, 
top, upper-middle, lower-middle, bottom, 
are involved in the Roseman move. 
We suppose that the bottom sheets look moving, 
and that sheets with other three heights look fixed. 
See Figure \ref{roseman7-21}. 
The bottom sheets are indicated 
by intersections with other sheets around the triple point. 
Considering directions of positive normals to 
the top, upper-middle and lower-middle sheets around the triple point, 
there are 8 cases to study, 
as illustrated in Figure \ref{roseman7-21}. 
We study the case illustrated in Figure \ref{roseman7}. 
Other cases are studied similarly.

Let ${\mathfrak{0}}, {\mathfrak{1}}, {\mathfrak{2}}, {\mathfrak{3}}$ 
denote labels on triple points of $D_1$ 
involved in the Roseman move. 
See Figure \ref{roseman7}. 
Let ${\bf p}, {\bf q}, {\bf r}, {\bf s}, {\bf t}$, 
${\bf 1}, {\bf 2}, {\bf 3}, {\bf 4}$, ${\bf 5}, {\bf 6}, {\bf 7}, {\bf 8}, {\bf 9}$ 
denote labels on double curves of $D_1$ involved in the move, and 
let $a, b, c, d, e, f, g$, $0, 1, 2, 3, 4, 5, 6$ 
denote labels on sheets of $D_1$ involved in the move. 
Perform a sequence of destabilizations on $(CR^{--} (D_1), \partial)$ 
along ${\bf 9} \to 0$, 
${\mathfrak{1}} \to {\bf 8}$, 
${\bf 6} \to 6$, 
${\mathfrak{3}} \to {\bf 7}$, 
${\bf 1} \to 1$, 
and 
${\mathfrak{2}} \to {\bf 2}$ in this order, and 
we obtain 
$(CR^{--1} (D_1), \partial^1)$.

Let ${\mathfrak{0}}, {\mathfrak{4}}, {\mathfrak{5}}, {\mathfrak{6}}$ 
denote labels on triple points of $D_2$ 
involved in the move. 
Let 
${\bf p}, {\bf q}, {\bf r}, {\bf s}, {\bf t}$, 
${\bf 1}, {\bf 2}, {\bf 3}, {\bf 4}$, ${\bf 5}, {\bf 6}, {\bf x}, {\bf y}, {\bf z}$ 
denote labels on double curves of $D_2$ involved in the move, and 
let $a, b, c, d, e, f, g$, $1, 2, 3, 4, 5, 6, 9$ 
denote labels on sheets of $D_2$ involved in the move. 
Perform a sequence of destabilizations on $(CR^{--} (D_2), \partial)$ 
along ${\bf z} \to 9$, 
${\mathfrak{5}} \to {\bf y}$, 
${\bf 6} \to 6$, 
${\mathfrak{4}} \to {\bf 1}$, 
${\bf x} \to 1$, 
and 
${\mathfrak{6}} \to {\bf 2}$ in this order, and 
we obtain 
$(CR^{--2} (D_2), \partial^2)$.

It is straightforward to see that 
$(CR^{--1} (D_1), \partial^1)$ is 
isomorphic to $(CR^{--2} (D_2), \partial^2)$. 
This shows that 
$(CR^{--}(D_1), \partial)$ is stably tame isomorphic to 
$(CR^{--}(D_2), \partial)$. 
\end{proof}

\section{Proof of Theorem \ref{main-theorem}} 

We may assume by Theorem \ref{roseman-yashiro} that 
$D_2$ is obtained from $D_1$ by a finite sequence of Roseman moves 
of types I, III, IV, V, VI and VII. 
Propositions \ref{roseman-3}, \ref{roseman-1}, \ref{roseman-4}, 
\ref{roseman-5}, \ref{roseman-6-1}, \ref{roseman-6-2}, \ref{roseman-7} 
complete the proof of Theorem \ref{main-theorem}.

\section{$(CR^{-+}(D), \partial)$, $(CR^{+-}(D), \partial)$, 
and $(CR^{++}(D), \partial)$}

In this section, 
we define three differential graded algebras 
$(CR^{-+}(D), \partial)$, $(CR^{+-}(D), \partial)$, $(CR^{++}(D), \partial)$, 
where $D$ denotes a diagram of a surface-knot in ${\mathbb{R}}^4$. 
A differential $\partial$ of these three differential graded algebras 
on generators 
involving no triple point is 
defined to be 
the same as the differential $\partial$ of $(CR^{--}(D), \partial)$ 
on the corresponding generators. 
In the following, we define differentials on generators involving triple points. 
Let $D$ denote a diagram of a surface-knot in ${\mathbb{R}}^4$. 
We label sheets of $D$ by $1, \cdots, n$, 
connected components of double curves of $D$ 
by ${\bf 1}, \cdots, {\bf m}$, 
triple points of $D$ 
by ${\mathfrak{1}}, \cdots, {\mathfrak{t}}$, 
and 
branch points of positive sign of $D$ 
by ${\textsl{1}}, \cdots, {\textsl{b}}$. 
We suppose 
$i \in \{ 1, \cdots, n \}$, 
${\bf x} \in \{ {\bf 1}, \cdots, {\bf m} \}$, 
${\mathfrak{p}}, {\mathfrak{q}} \in \{ {\mathfrak{1}}, \cdots, {\mathfrak{t}} \}$, 
${\textsl{k}} \in \{ {\textsl{1}}, \cdots, {\textsl{b}} \}.$ 

\subsection{$(CR^{-+}(D), \partial)$} 
We define 
a differential $\partial$ 
on generators of $CR^{-+}(D)$ 
involving triple points 
as follows. \\ 
$\partial a_{31}^{-+}({\mathfrak{p}}, i)$ 
$= \mu a_{21}({\bf tb}^-_{\mathfrak{p}}, i) 
+ a_{21}({\bf tb}^+_{\mathfrak{p}}, i) 
- a_{21}({\bf tb}^+_{\mathfrak{p}}, m^+_{\mathfrak{p}}) 
a_{11}(m^+_{\mathfrak{p}}, i)$ \\ 
$- \mu a_{21}({\bf mb}^-_{\mathfrak{p}}, i) 
- a_{21}({\bf mb}^+_{\mathfrak{p}}, i) 
+ a_{21}({\bf mb}^-_{\mathfrak{p}}, t_{\mathfrak{p}}) 
a_{11}(t_{\mathfrak{p}}, i)$ \\ 
$+ \mu a_{21}({\bf mb}^-_{\mathfrak{p}}, m^-_{\mathfrak{p}}) 
a_{11}(m^-_{\mathfrak{p}}, i) 
+ a_{21}({\bf mb}^+_{\mathfrak{p}}, m^+_{\mathfrak{p}}) 
a_{11}(m^+_{\mathfrak{p}}, i) 
- a_{21}({\bf mb}^-_{\mathfrak{p}}, m^-_{\mathfrak{p}}) 
a_{11}(m^-_{\mathfrak{p}}, t_{\mathfrak{p}}) 
a_{11}(t_{\mathfrak{p}}, i)$ \\ 
$- a_{11}(b^{-+}_{\mathfrak{p}}, m^-_{\mathfrak{p}}) 
a_{21}({\bf tm}_{\mathfrak{p}}, i) 
- a_{12}(b^{-+}_{\mathfrak{p}}, {\bf tm}_{\mathfrak{p}}) 
a_{11}(m^+_{\mathfrak{p}}, i) 
+ \mu^{-1} a_{11}(b^{-+}_{\mathfrak{p}}, t_{\mathfrak{p}}) 
a_{12}(t_{\mathfrak{p}}, {\bf tm}_{\mathfrak{p}}) 
a_{11}(m^+_{\mathfrak{p}}, i)$, \\  
$\partial a_{13}^{-+}(i, {\mathfrak{p}})$ 
$= a_{12}(i, {\bf tb}^-_{\mathfrak{p}}) 
+ \mu a_{12}(i, {\bf tb}^+_{\mathfrak{p}}) 
- a_{11}(i, m^+_{\mathfrak{p}}) 
a_{12}(m^+_{\mathfrak{p}}, {\bf tb}^+_{\mathfrak{p}})$ \\ 
$- a_{12}(i, {\bf mb}^-_{\mathfrak{p}}) 
- \mu a_{12}(i, {\bf mb}^+_{\mathfrak{p}}) 
+ a_{11}(i, t_{\mathfrak{p}}) 
a_{12}(t_{\mathfrak{p}}, {\bf mb}^-_{\mathfrak{p}})$ \\ 
$+ \mu^{-1} a_{11}(i, m^-_{\mathfrak{p}}) 
a_{12}(m^-_{\mathfrak{p}}, {\bf mb}^-_{\mathfrak{p}}) 
+ a_{11}(i, m^+_{\mathfrak{p}}) 
a_{12}(m^+_{\mathfrak{p}}, {\bf mb}^+_{\mathfrak{p}}) 
- \mu^{-1} a_{11}(i, t_{\mathfrak{p}}) 
a_{11}(t_{\mathfrak{p}}, m^-_{\mathfrak{p}}) 
a_{12}(m^-_{\mathfrak{p}}, {\bf mb}^-_{\mathfrak{p}})$ \\ 
$- a_{12}(i, {\bf tm}_{\mathfrak{p}}) 
a_{11}(m^-_{\mathfrak{p}}, b^{-+}_{\mathfrak{p}}) 
- a_{11}(i, m^+_{\mathfrak{p}}) 
a_{21}({\bf tm}_{\mathfrak{p}}, b^{-+}_{\mathfrak{p}}) 
+ a_{11}(i, m^+_{\mathfrak{p}}) 
a_{21}({\bf tm}_{\mathfrak{p}}, t_{\mathfrak{p}}) 
a_{11}(t_{\mathfrak{p}}, b^{-+}_{\mathfrak{p}})$, \\  
$\partial a_{32}^{-+}({\mathfrak{p}}, {\bf x})$ 
$= \mu a_{22}({\bf tb}^-_{\mathfrak{p}}, {\bf x}) 
+ a_{22}({\bf tb}^+_{\mathfrak{p}}, {\bf x}) 
- a_{21}({\bf tb}^+_{\mathfrak{p}}, m^+_{\mathfrak{p}}) 
a_{12}(m^+_{\mathfrak{p}}, {\bf x})$ \\ 
$- \mu a_{22}({\bf mb}^-_{\mathfrak{p}}, {\bf x}) 
- a_{22}({\bf mb}^+_{\mathfrak{p}}, {\bf x}) 
+ a_{21}({\bf mb}^-_{\mathfrak{p}}, t_{\mathfrak{p}}) 
a_{12}(t_{\mathfrak{p}}, {\bf x})$ \\ 
$+ \mu a_{21}({\bf mb}^-_{\mathfrak{p}}, m^-_{\mathfrak{p}}) 
a_{12}(m^-_{\mathfrak{p}}, {\bf x}) 
+ a_{21}({\bf mb}^+_{\mathfrak{p}}, m^+_{\mathfrak{p}}) 
a_{12}(m^+_{\mathfrak{p}}, {\bf x}) 
- a_{21}({\bf mb}^-_{\mathfrak{p}}, m^-_{\mathfrak{p}}) 
a_{11}(m^-_{\mathfrak{p}}, t_{\mathfrak{p}}) 
a_{12}(t_{\mathfrak{p}}, {\bf x})$ \\  
$- a_{11}(b^{-+}_{\mathfrak{p}}, m^-_{\mathfrak{p}}) 
a_{22}({\bf tm}_{\mathfrak{p}}, {\bf x}) 
- a_{12}(b^{-+}_{\mathfrak{p}}, {\bf tm}_{\mathfrak{p}}) 
a_{12}(m^+_{\mathfrak{p}}, {\bf x}) 
+ \mu^{-1} a_{11}(b^{-+}_{\mathfrak{p}}, t_{\mathfrak{p}}) 
a_{12}(t_{\mathfrak{p}}, {\bf tm}_{\mathfrak{p}}) 
a_{12}(m^+_{\mathfrak{p}}, {\bf x})$ \\ 
$- a_{31}^{-+}({\mathfrak{p}}, u^-_{\bf x}) 
- \mu a_{31}^{-+}({\mathfrak{p}}, u^+_{\bf x}) 
+ a_{31}^{-+}({\mathfrak{p}}, o_{\bf x}) 
a_{11}(o_{\bf x}, u^-_{\bf x})$, \\  
$\partial a_{23}^{-+}({\bf x}, {\mathfrak{p}})$ 
$= \mu a_{13}^{-+}(u^-_{\bf x}, {\mathfrak{p}}) 
+ a_{13}^{-+}(u^+_{\bf x}, {\mathfrak{p}}) 
- a_{11}(u^-_{\bf x}, o_{\bf x}) 
a_{13}^{-+}(o_{\bf x}, {\mathfrak{p}})$ \\ 
$- a_{22}({\bf x}, {\bf tb}^-_{\mathfrak{p}}) 
- \mu a_{22}({\bf x}, {\bf tb}^+_{\mathfrak{p}}) 
+ a_{21}({\bf x}, m^+_{\mathfrak{p}}) 
a_{12}(m^+_{\mathfrak{p}}, {\bf tb}^+_{\mathfrak{p}})$ \\ 
$+ a_{22}({\bf x}, {\bf mb}^-_{\mathfrak{p}}) 
+ \mu a_{22}({\bf x}, {\bf mb}^+_{\mathfrak{p}}) 
- a_{21}({\bf x}, t_{\mathfrak{p}}) 
a_{12}(t_{\mathfrak{p}}, {\bf mb}^-_{\mathfrak{p}})$ \\ 
$- \mu^{-1} a_{21}({\bf x}, m^-_{\mathfrak{p}}) 
a_{12}(m^-_{\mathfrak{p}}, {\bf mb}^-_{\mathfrak{p}}) 
- a_{21}({\bf x}, m^+_{\mathfrak{p}}) 
a_{12}(m^+_{\mathfrak{p}}, {\bf mb}^+_{\mathfrak{p}}) 
+ \mu^{-1} a_{21}({\bf x}, t_{\mathfrak{p}}) 
a_{11}(t_{\mathfrak{p}}, m^-_{\mathfrak{p}}) 
a_{12}(m^-_{\mathfrak{p}}, {\bf mb}^-_{\mathfrak{p}})$ \\ 
$+ a_{22}({\bf x}, {\bf tm}_{\mathfrak{p}}) 
a_{11}(m^-_{\mathfrak{p}}, b^{-+}_{\mathfrak{p}}) 
+ a_{21}({\bf x}, m^+_{\mathfrak{p}}) 
a_{21}({\bf tm}_{\mathfrak{p}}, b^{-+}_{\mathfrak{p}}) 
- a_{21}({\bf x}, m^+_{\mathfrak{p}}) 
a_{21}({\bf tm}_{\mathfrak{p}}, t_{\mathfrak{p}}) 
a_{11}(t_{\mathfrak{p}}, b^{-+}_{\mathfrak{p}})$, \\  
$\partial a_{3}^{-+}({\mathfrak{p}})$ 
$= a_{2}({\bf tb}^+_{\mathfrak{p}})$ 
$- a_{2}({\bf tb}^-_{\mathfrak{p}})$ 
$- a_{2}({\bf mb}^+_{\mathfrak{p}})$ 
$+ \mu a_{22}({\bf tb}^-_{\mathfrak{p}}, {\bf mb}^+_{\mathfrak{p}})$ \\ 
$- a_{11}(b^{--}_{\mathfrak{p}}, m^-_{\mathfrak{p}}) 
\mu^{-1} (a_{22}({\bf tm}_{\mathfrak{p}}, {\bf tm}_{\mathfrak{p}})$ 
$- a_{2}({\bf tm}_{\mathfrak{p}})) 
a_{11}(m^-_{\mathfrak{p}}, b^{--}_{\mathfrak{p}})$ \\ 
$- a_{11}(b^{--}_{\mathfrak{p}}, m^-_{\mathfrak{p}}) 
(a_{22}({\bf tm}_{\mathfrak{p}}, {\bf mb}^+_{\mathfrak{p}}) 
- \mu^{-1} a_{22}({\bf tm}_{\mathfrak{p}}, {\bf tb}^-_{\mathfrak{p}}))$ \\ 
$+ ((\mu^{-1} a_{11}(b^{--}_{\mathfrak{p}}, m^-_{\mathfrak{p}}) 
a_{21}({\bf tm}_{\mathfrak{p}}, m^+_{\mathfrak{p}}) 
+ a_{12}(b^{--}_{\mathfrak{p}}, {\bf tm}_{\mathfrak{p}}) 
- \mu^{-1} a_{11}(b^{--}_{\mathfrak{p}}, t_{\mathfrak{p}}) 
a_{12}(t_{\mathfrak{p}}, {\bf tm}_{\mathfrak{p}}))$ \\ 
$(a_{12}(m^+_{\mathfrak{p}}, {\bf tm}_{\mathfrak{p}}) 
a_{11}(m^-_{\mathfrak{p}}, b^{--}_{\mathfrak{p}}) 
+ a_{21}({\bf tm}_{\mathfrak{p}}, m^-_{\mathfrak{p}}) 
a_{11}(m^-_{\mathfrak{p}}, b^{--}_{\mathfrak{p}}) 
- a_{21}({\bf tm}_{\mathfrak{p}}, t_{\mathfrak{p}}) 
a_{11}(t_{\mathfrak{p}}, m^-_{\mathfrak{p}}) 
a_{11}(m^-_{\mathfrak{p}}, b^{--}_{\mathfrak{p}})))$ \\ 
$+ \mu^{-1} 
((a_{11}(b^{--}_{\mathfrak{p}}, m^-_{\mathfrak{p}}) 
a_{12}(m^-_{\mathfrak{p}}, {\bf tm}_{\mathfrak{p}}) 
- \mu^{-1} a_{11}(b^{--}_{\mathfrak{p}}, m^-_{\mathfrak{p}}) 
a_{11}(m^-_{\mathfrak{p}}, t_{\mathfrak{p}}) 
a_{12}(t_{\mathfrak{p}}, {\bf tm}_{\mathfrak{p}})$ \\ 
$- \mu a_{12}(b^{--}_{\mathfrak{p}}, {\bf tm}_{\mathfrak{p}}) 
+ a_{11}(b^{--}_{\mathfrak{p}}, t_{\mathfrak{p}}) 
a_{12}(t_{\mathfrak{p}}, {\bf tm}_{\mathfrak{p}}))$ \\ 
$(a_{12}(m^+_{\mathfrak{p}}, {\bf mb}^+_{\mathfrak{p}}) 
+ a_{12}(m^+_{\mathfrak{p}}, {\bf tb}^-_{\mathfrak{p}})
+ a_{21}({\bf tm}_{\mathfrak{p}}, b^{--}_{\mathfrak{p}}) 
- a_{21}({\bf tm}_{\mathfrak{p}}, t_{\mathfrak{p}}) 
a_{11}(t_{\mathfrak{p}}, b^{--}_{\mathfrak{p}})))$ \\ 
$+ (\mu^{-1} a_{11}(b^{--}_{\mathfrak{p}}, m^-_{\mathfrak{p}}) 
a_{21}({\bf tm}_{\mathfrak{p}}, m^+_{\mathfrak{p}}) 
- a_{21}({\bf tb}^-_{\mathfrak{p}}, m^+_{\mathfrak{p}}))$ 
$a_{12}(m^+_{\mathfrak{p}}, {\bf mb}^+_{\mathfrak{p}})$ \\ 
$- \mu a_{31}^{-+}({\mathfrak{p}}, b^{++}_{\mathfrak{p}})$ 
$+ \mu a_{13}^{-+}(b^{--}_{\mathfrak{p}}, {\mathfrak{p}})$ 
$- a_{11}(b^{--}_{\mathfrak{p}}, m^-_{\mathfrak{p}}) 
a_{13}^{-+}(m^-_{\mathfrak{p}}, {\mathfrak{p}})$ \\ 
$- (a_{11}(b^{--}_{\mathfrak{p}}, t_{\mathfrak{p}}) 
- \mu^{-1} a_{11}(b^{--}_{\mathfrak{p}}, m^-_{\mathfrak{p}}) 
a_{11}(m^-_{\mathfrak{p}}, t_{\mathfrak{p}})) 
a_{13}^{-+}(t_{\mathfrak{p}}, {\mathfrak{p}})$ \\ 
$- (\mu a_{11}(b^{--}_{\mathfrak{p}}, m^+_{\mathfrak{p}}) 
- a_{11}(b^{--}_{\mathfrak{p}}, m^-_{\mathfrak{p}}) 
a_{11}(m^-_{\mathfrak{p}}, m^+_{\mathfrak{p}}) 
- a_{11}(b^{--}_{\mathfrak{p}}, t_{\mathfrak{p}}) 
a_{11}(t_{\mathfrak{p}}, m^+_{\mathfrak{p}})$ \\ 
$+ \mu^{-1} a_{11}(b^{--}_{\mathfrak{p}}, m^-_{\mathfrak{p}}) 
a_{11}(m^-_{\mathfrak{p}}, t_{\mathfrak{p}}) 
a_{11}(t_{\mathfrak{p}}, m^+_{\mathfrak{p}}))$ 
$a_{13}^{-+}(m^+_{\mathfrak{p}}, {\mathfrak{p}})$ \\ 
$+ a_{31}^{-+}({\mathfrak{p}}, m^+_{\mathfrak{p}}) 
a_{11}(m^+_{\mathfrak{p}}, b^{++}_{\mathfrak{p}})$ 
$- \mu a_{22}({\bf mb}^-_{\mathfrak{p}}, {\bf tb}^+_{\mathfrak{p}})$ \\ 
$+ (a_{11}(b^{--}_{\mathfrak{p}}, m^-_{\mathfrak{p}}) 
a_{11}(m^-_{\mathfrak{p}}, m^+_{\mathfrak{p}}) 
- \mu^{-1} a_{11}(b^{--}_{\mathfrak{p}}, m^-_{\mathfrak{p}}) 
a_{11}(m^-_{\mathfrak{p}}, t_{\mathfrak{p}}) 
a_{11}(t_{\mathfrak{p}}, m^+_{\mathfrak{p}})$ 
$- \mu a_{11}(b^{--}_{\mathfrak{p}}, m^+_{\mathfrak{p}})$ \\ 
$+ a_{11}(b^{--}_{\mathfrak{p}}, t_{\mathfrak{p}}) 
a_{11}(t_{\mathfrak{p}}, m^+_{\mathfrak{p}}))$ 
$a_{22}({\bf tm}_{\mathfrak{p}}, {\bf mb}^-_{\mathfrak{p}})$ \\ 
$+ a_{22}({\bf mb}^-_{\mathfrak{p}}, {\bf tm}_{\mathfrak{p}}) 
a_{11}(m^+_{\mathfrak{p}}, b^{++}_{\mathfrak{p}})$ \\ 
$- (a_{11}(b^{--}_{\mathfrak{p}}, m^-_{\mathfrak{p}}) 
a_{11}(m^-_{\mathfrak{p}}, m^+_{\mathfrak{p}}) 
- \mu^{-1} a_{11}(b^{--}_{\mathfrak{p}}, m^-_{\mathfrak{p}}) 
a_{11}(m^-_{\mathfrak{p}}, t_{\mathfrak{p}}) 
a_{11}(t_{\mathfrak{p}}, m^+_{\mathfrak{p}})$ 
$- \mu a_{11}(b^{--}_{\mathfrak{p}}, m^+_{\mathfrak{p}})$ \\ 
$+ a_{11}(b^{--}_{\mathfrak{p}}, t_{\mathfrak{p}}) 
a_{11}(t_{\mathfrak{p}}, m^+_{\mathfrak{p}}))$ 
$(- \mu^{-1} a_{12}(m^+_{\mathfrak{p}}, {\bf tm}_{\mathfrak{p}}) 
a_{12}(m^-_{\mathfrak{p}}, {\bf mb}^-_{\mathfrak{p}})$ 
$+ a_{21}({\bf tm}_{\mathfrak{p}}, t_{\mathfrak{p}}) 
a_{12}(t_{\mathfrak{p}}, {\bf mb}^-_{\mathfrak{p}}))$ \\ 
$- \mu^{-1} (a_{11}(b^{--}_{\mathfrak{p}}, m^-_{\mathfrak{p}}) 
a_{12}(m^-_{\mathfrak{p}}, {\bf tm}_{\mathfrak{p}}) 
- \mu a_{12}(b^{--}_{\mathfrak{p}}, {\bf tm}_{\mathfrak{p}}) 
+ a_{11}(b^{--}_{\mathfrak{p}}, t_{\mathfrak{p}}) 
a_{12}(t_{\mathfrak{p}}, {\bf tm}_{\mathfrak{p}})$ \\ 
$- \mu^{-1} a_{11}(b^{--}_{\mathfrak{p}}, m^-_{\mathfrak{p}}) 
a_{11}(m^-_{\mathfrak{p}}, t_{\mathfrak{p}}) 
a_{12}(t_{\mathfrak{p}}, {\bf tm}_{\mathfrak{p}}))$ 
$a_{12}(m^-_{\mathfrak{p}}, {\bf mb}^-_{\mathfrak{p}})$ \\ 
$- \mu^{-1} a_{21}({\bf mb}^-_{\mathfrak{p}}, t_{\mathfrak{p}}) 
a_{12}(t_{\mathfrak{p}}, {\bf tm}_{\mathfrak{p}}) 
a_{11}(m^+_{\mathfrak{p}}, b^{++}_{\mathfrak{p}})$ 
$+ a_{21}({\bf mb}^-_{\mathfrak{p}}, t_{\mathfrak{p}}) 
a_{12}(t_{\mathfrak{p}}, {\bf tb}^+_{\mathfrak{p}})$ \\ 
$+ a_{21}({\bf mb}^-_{\mathfrak{p}}, m^-_{\mathfrak{p}}) 
(a_{21}({\bf tm}_{\mathfrak{p}}, m^+_{\mathfrak{p}}) 
a_{11}(m^+_{\mathfrak{p}}, b^{++}_{\mathfrak{p}}) 
- \mu a_{21}({\bf tm}_{\mathfrak{p}}, b^{++}_{\mathfrak{p}}))$ 
$+ a_{2}({\bf mb}^-_{\mathfrak{p}})$, \\  
$\partial a_{33}^{-+}({\mathfrak{p}}, {\mathfrak{q}})$ 
$= \mu a_{23}^{-+}({\bf tb}^-_{\mathfrak{p}}, {\mathfrak{q}}) 
+ a_{23}^{-+}({\bf tb}^+_{\mathfrak{p}}, {\mathfrak{q}}) 
- a_{21}({\bf tb}^+_{\mathfrak{p}}, m^+_{\mathfrak{p}}) 
a_{13}^{-+}(m^+_{\mathfrak{p}}, {\mathfrak{q}})$ \\ 
$- \mu a_{23}^{-+}({\bf mb}^-_{\mathfrak{p}}, {\mathfrak{q}}) 
- a_{23}^{-+}({\bf mb}^+_{\mathfrak{p}}, {\mathfrak{q}}) 
+ a_{21}({\bf mb}^-_{\mathfrak{p}}, t_{\mathfrak{p}}) 
a_{13}^{-+}(t_{\mathfrak{p}}, {\mathfrak{q}})$ \\ 
$+ \mu a_{21}({\bf mb}^-_{\mathfrak{p}}, m^-_{\mathfrak{p}}) 
a_{13}^{-+}(m^-_{\mathfrak{p}}, {\mathfrak{q}}) 
+ a_{21}({\bf mb}^+_{\mathfrak{p}}, m^+_{\mathfrak{p}}) 
a_{13}^{-+}(m^+_{\mathfrak{p}}, {\mathfrak{q}})$ 
$- a_{21}({\bf mb}^-_{\mathfrak{p}}, m^-_{\mathfrak{p}}) 
a_{11}(m^-_{\mathfrak{p}}, t_{\mathfrak{p}}) 
a_{13}^{-+}(t_{\mathfrak{p}}, {\mathfrak{q}})$ \\ 
$- a_{11}(b^{-+}_{\mathfrak{p}}, m^-_{\mathfrak{p}}) 
a_{23}^{-+}({\bf tm}_{\mathfrak{p}}, {\mathfrak{q}}) 
- a_{12}(b^{-+}_{\mathfrak{p}}, {\bf tm}_{\mathfrak{p}}) 
a_{13}^{-+}(m^+_{\mathfrak{p}}, {\mathfrak{q}})$ 
$+ \mu^{-1} a_{11}(b^{-+}_{\mathfrak{p}}, t_{\mathfrak{p}}) 
a_{12}(t_{\mathfrak{p}}, {\bf tm}_{\mathfrak{p}}) 
a_{13}^{-+}(m^+_{\mathfrak{p}}, {\mathfrak{q}})$ \\ 
$+ a_{32}^{-+}({\mathfrak{p}}, {\bf tb}^-_{\mathfrak{q}}) 
+ \mu a_{32}^{-+}({\mathfrak{p}}, {\bf tb}^+_{\mathfrak{q}}) 
- a_{31}^{-+}({\mathfrak{p}}, m^+_{\mathfrak{q}}) 
a_{12}(m^+_{\mathfrak{q}}, {\bf tb}^+_{\mathfrak{q}})$ \\ 
$- a_{32}^{-+}({\mathfrak{p}}, {\bf mb}^-_{\mathfrak{q}}) 
- \mu a_{32}^{-+}({\mathfrak{p}}^{-+}, {\bf mb}^+_{\mathfrak{q}}) 
+ a_{31}^{-+}({\mathfrak{p}}, t_{\mathfrak{q}}) 
a_{12}(t_{\mathfrak{q}}, {\bf mb}^-_{\mathfrak{q}})$ \\ 
$+ \mu^{-1} a_{31}^{-+}({\mathfrak{p}}, m^-_{\mathfrak{q}}) 
a_{12}(m^-_{\mathfrak{q}}, {\bf mb}^-_{\mathfrak{q}}) 
+ a_{31}^{-+}({\mathfrak{p}}, m^+_{\mathfrak{q}}) 
a_{12}(m^+_{\mathfrak{q}}, {\bf mb}^+_{\mathfrak{q}})$ 
$- \mu^{-1} a_{31}^{-+}({\mathfrak{p}}, t_{\mathfrak{q}}) 
a_{11}(t_{\mathfrak{q}}, m^-_{\mathfrak{q}}) 
a_{12}(m^-_{\mathfrak{q}}, {\bf mb}^-_{\mathfrak{q}})$ \\ 
$- a_{32}^{-+}({\mathfrak{p}}, {\bf tm}_{\mathfrak{q}}) 
a_{11}(m^-_{\mathfrak{q}}, b^{-+}_{\mathfrak{q}}) 
- a_{31}^{-+}({\mathfrak{p}}, m^+_{\mathfrak{q}}) 
a_{21}({\bf tm}_{\mathfrak{q}}, b^{-+}_{\mathfrak{q}})$ 
$+ a_{31}^{-+}({\mathfrak{p}}, m^+_{\mathfrak{q}}) 
a_{21}({\bf tm}_{\mathfrak{q}}, t_{\mathfrak{q}}) 
a_{11}(t_{\mathfrak{q}}, b^{-+}_{\mathfrak{q}})$, \\ 
$\partial a_{3b}^{-+}({\mathfrak{p}}, {\textsl{k}})$ 
$= \mu a_{2b}({\bf tb}^-_{\mathfrak{p}}, {\textsl{k}}) 
+ a_{2b}({\bf tb}^+_{\mathfrak{p}}, {\textsl{k}}) 
- a_{21}({\bf tb}^+_{\mathfrak{p}}, m^+_{\mathfrak{p}}) 
a_{1b}(m^+_{\mathfrak{p}}, {\textsl{k}})$ \\ 
$- \mu a_{2b}({\bf mb}^-_{\mathfrak{p}}, {\textsl{k}}) 
- a_{2b}({\bf mb}^+_{\mathfrak{p}}, {\textsl{k}}) 
+ a_{21}({\bf mb}^-_{\mathfrak{p}}, t_{\mathfrak{p}}) 
a_{1b}(t_{\mathfrak{p}}, {\textsl{k}})$ \\ 
$+ \mu a_{21}({\bf mb}^-_{\mathfrak{p}}, m^-_{\mathfrak{p}}) 
a_{1b}(m^-_{\mathfrak{p}}, {\textsl{k}}) 
+ a_{21}({\bf mb}^+_{\mathfrak{p}}, m^+_{\mathfrak{p}}) 
a_{1b}(m^+_{\mathfrak{p}}, {\textsl{k}}) 
- a_{21}({\bf mb}^-_{\mathfrak{p}}, m^-_{\mathfrak{p}}) 
a_{11}(m^-_{\mathfrak{p}}, t_{\mathfrak{p}}) 
a_{1b}(t_{\mathfrak{p}}, {\textsl{k}})$ \\ 
$- a_{11}(b^{-+}_{\mathfrak{p}}, m^-_{\mathfrak{p}}) 
a_{2b}({\bf tm}_{\mathfrak{p}}, {\textsl{k}}) 
- a_{12}(b^{-+}_{\mathfrak{p}}, {\bf tm}_{\mathfrak{p}}) 
a_{1b}(m^+_{\mathfrak{p}}, {\textsl{k}}) 
+ \mu^{-1} a_{11}(b^{-+}_{\mathfrak{p}}, t_{\mathfrak{p}}) 
a_{12}(t_{\mathfrak{p}}, {\bf tm}_{\mathfrak{p}}) 
a_{1b}(m^+_{\mathfrak{p}}, {\textsl{k}})$ \\ 
$+ a_{32}^{-+}({\mathfrak{p}}, {\bf dc}_{\textsl{k}})$, \\ 
$\partial a_{b3}^{-+}({\textsl{k}}, {\mathfrak{p}})$ 
$= a_{23}^{-+}({\bf dc}_{\textsl{k}}, {\mathfrak{p}})$ 
$+ a_{b2}({\textsl{k}}, {\bf tb}^-_{\mathfrak{p}}) 
+ \mu a_{b2}({\textsl{k}}, {\bf tb}^+_{\mathfrak{p}}) 
- a_{b1}({\textsl{k}}, m^+_{\mathfrak{p}}) 
a_{12}(m^+_{\mathfrak{p}}, {\bf tb}^+_{\mathfrak{p}})$ \\ 
$- a_{b2}({\textsl{k}}, {\bf mb}^-_{\mathfrak{p}}) 
- \mu a_{b2}({\textsl{k}}, {\bf mb}^+_{\mathfrak{p}}) 
+ a_{b1}({\textsl{k}}, t_{\mathfrak{p}}) 
a_{12}(t_{\mathfrak{p}}, {\bf mb}^-_{\mathfrak{p}})$ \\ 
$+ \mu^{-1} a_{b1}({\textsl{k}}, m^-_{\mathfrak{p}}) 
a_{12}(m^-_{\mathfrak{p}}, {\bf mb}^-_{\mathfrak{p}}) 
+ a_{b1}({\textsl{k}}, m^+_{\mathfrak{p}}) 
a_{12}(m^+_{\mathfrak{p}}, {\bf mb}^+_{\mathfrak{p}}) 
- \mu^{-1} a_{b1}({\textsl{k}}, t_{\mathfrak{p}}) 
a_{11}(t_{\mathfrak{p}}, m^-_{\mathfrak{p}}) 
a_{12}(m^-_{\mathfrak{p}}, {\bf mb}^-_{\mathfrak{p}})$ \\ 
$- a_{b2}({\textsl{k}}, {\bf tm}_{\mathfrak{p}}) 
a_{11}(m^-_{\mathfrak{p}}, b^{-+}_{\mathfrak{p}}) 
- a_{b1}({\textsl{k}}, m^+_{\mathfrak{p}}) 
a_{21}({\bf tm}_{\mathfrak{p}}, b^{-+}_{\mathfrak{p}}) 
+ a_{b1}({\textsl{k}}, m^+_{\mathfrak{p}}) 
a_{21}({\bf tm}_{\mathfrak{p}}, t_{\mathfrak{p}}) 
a_{11}(t_{\mathfrak{p}}, b^{-+}_{\mathfrak{p}})$. \\ 
It is straightforward to see that the equation $\partial \circ \partial = 0$ 
holds on generators of $CR^{-+}(D)$.

\subsection{$(CR^{+-}(D), \partial)$} 
We define 
a differential $\partial$ 
on generators of $CR^{+-}(D)$ 
involving triple points as follows. \\ 
$\partial a_{31}^{+-}({\mathfrak{p}}, i)$ 
$= \mu a_{21}({\bf tb}^-_{\mathfrak{p}}, i) 
+ a_{21}({\bf tb}^+_{\mathfrak{p}}, i) 
- a_{21}({\bf tb}^-_{\mathfrak{p}}, m^-_{\mathfrak{p}}) 
a_{11}(m^-_{\mathfrak{p}}, i)$ \\ 
$- \mu a_{21}({\bf mb}^-_{\mathfrak{p}}, i) 
- a_{21}({\bf mb}^+_{\mathfrak{p}}, i) 
+ a_{21}({\bf mb}^+_{\mathfrak{p}}, t_{\mathfrak{p}}) 
a_{11}(t_{\mathfrak{p}}, i)$ \\ 
$- \mu a_{21}({\bf tb}^-_{\mathfrak{p}}, t_{\mathfrak{p}}) 
a_{11}(t_{\mathfrak{p}}, i) 
- a_{21}({\bf tb}^+_{\mathfrak{p}}, t_{\mathfrak{p}}) 
a_{11}(t_{\mathfrak{p}}, i) 
+ a_{21}({\bf tb}^-_{\mathfrak{p}}, t_{\mathfrak{p}}) 
a_{11}(t_{\mathfrak{p}}, m^-_{\mathfrak{p}}) 
a_{11}(m^-_{\mathfrak{p}}, i)$ \\ 
$- a_{11}(b^{+-}_{\mathfrak{p}}, m^+_{\mathfrak{p}}) 
a_{21}({\bf tm}_{\mathfrak{p}}, i) 
- a_{12}(b^{+-}_{\mathfrak{p}}, {\bf tm}_{\mathfrak{p}}) 
a_{11}(m^-_{\mathfrak{p}}, i) 
+ a_{11}(b^{+-}_{\mathfrak{p}}, m^+_{\mathfrak{p}}) 
a_{21}({\bf tm}_{\mathfrak{p}}, t_{\mathfrak{p}}) 
a_{11}(t_{\mathfrak{p}}, i)$, \\  
$\partial a_{13}^{+-}(i, {\mathfrak{p}})$ 
$= a_{12}(i, {\bf tb}^-_{\mathfrak{p}}) 
+ \mu a_{12}(i, {\bf tb}^+_{\mathfrak{p}}) 
- a_{11}(i, m^-_{\mathfrak{p}}) 
a_{12}(m^-_{\mathfrak{p}}, {\bf tb}^-_{\mathfrak{p}})$ \\ 
$- a_{12}(i, {\bf mb}^-_{\mathfrak{p}}) 
- \mu a_{12}(i, {\bf mb}^+_{\mathfrak{p}}) 
+ a_{11}(i, t_{\mathfrak{p}}) 
a_{12}(t_{\mathfrak{p}}, {\bf mb}^+_{\mathfrak{p}})$ \\ 
$- \mu^{-1} a_{11}(i, t_{\mathfrak{p}}) 
a_{12}(t_{\mathfrak{p}}, {\bf tb}^-_{\mathfrak{p}}) 
- a_{11}(i, t_{\mathfrak{p}}) 
a_{12}(t_{\mathfrak{p}}, {\bf tb}^+_{\mathfrak{p}}) 
+ \mu^{-1} a_{11}(i, m^-_{\mathfrak{p}}) 
a_{11}(m^-_{\mathfrak{p}}, t_{\mathfrak{p}}) 
a_{12}(t_{\mathfrak{p}}, {\bf tb}^-_{\mathfrak{p}})$ \\ 
$- a_{12}(i, {\bf tm}_{\mathfrak{p}}) 
a_{11}(m^+_{\mathfrak{p}}, b^{+-}_{\mathfrak{p}}) 
- a_{11}(i, m^-_{\mathfrak{p}}) 
a_{21}({\bf tm}_{\mathfrak{p}}, b^{+-}_{\mathfrak{p}}) 
+ \mu^{-1} a_{11}(i, t_{\mathfrak{p}}) 
a_{12}(t_{\mathfrak{p}}, {\bf tm}_{\mathfrak{p}}) 
a_{11}(m^+_{\mathfrak{p}}, b^{+-}_{\mathfrak{p}})$, \\  
$\partial a_{32}^{+-}({\mathfrak{p}}, {\bf x})$ 
$= \mu a_{22}({\bf tb}^-_{\mathfrak{p}}, {\bf x}) 
+ a_{22}({\bf tb}^+_{\mathfrak{p}}, {\bf x}) 
- a_{21}({\bf tb}^-_{\mathfrak{p}}, m^-_{\mathfrak{p}}) 
a_{12}(m^-_{\mathfrak{p}}, {\bf x})$ \\ 
$- \mu a_{22}({\bf mb}^-_{\mathfrak{p}}, {\bf x}) 
- a_{22}({\bf mb}^+_{\mathfrak{p}}, {\bf x}) 
+ a_{21}({\bf mb}^+_{\mathfrak{p}}, t_{\mathfrak{p}}) 
a_{12}(t_{\mathfrak{p}}, {\bf x})$ \\ 
$- \mu a_{21}({\bf tb}^-_{\mathfrak{p}}, t_{\mathfrak{p}}) 
a_{12}(t_{\mathfrak{p}}, {\bf x}) 
- a_{21}({\bf tb}^+_{\mathfrak{p}}, t_{\mathfrak{p}}) 
a_{12}(t_{\mathfrak{p}}, {\bf x}) 
+ a_{21}({\bf tb}^-_{\mathfrak{p}}, t_{\mathfrak{p}}) 
a_{11}(t_{\mathfrak{p}}, m^-_{\mathfrak{p}}) 
a_{12}(m^-_{\mathfrak{p}}, {\bf x})$ \\ 
$- a_{11}(b^{+-}_{\mathfrak{p}}, m^+_{\mathfrak{p}}) 
a_{22}({\bf tm}_{\mathfrak{p}}, {\bf x}) 
- a_{12}(b^{+-}_{\mathfrak{p}}, {\bf tm}_{\mathfrak{p}}) 
a_{12}(m^-_{\mathfrak{p}}, {\bf x}) 
+ a_{11}(b^{+-}_{\mathfrak{p}}, m^+_{\mathfrak{p}}) 
a_{21}({\bf tm}_{\mathfrak{p}}, t_{\mathfrak{p}}) 
a_{12}(t_{\mathfrak{p}}, {\bf x})$ \\ 
$- a_{31}^{+-}({\mathfrak{p}}, u^-_{\bf x}) 
- \mu a_{31}^{+-}({\mathfrak{p}}, u^+_{\bf x}) 
+ a_{31}^{+-}({\mathfrak{p}}, o_{\bf x}) 
a_{11}(o_{\bf x}, u^-_{\bf x})$, \\  
$\partial a_{23}^{+-}({\bf x}, {\mathfrak{p}})$ 
$= \mu a_{13}^{+-}(u^-_{\bf x}, {\mathfrak{p}}) 
+ a_{13}^{+-}(u^+_{\bf x}, {\mathfrak{p}}) 
- a_{11}(u^-_{\bf x}, o_{\bf x}) 
a_{13}^{+-}(o_{\bf x}, {\mathfrak{p}})$ \\ 
$- a_{22}({\bf x}, {\bf tb}^-_{\mathfrak{p}}) 
- \mu a_{22}({\bf x}, {\bf tb}^+_{\mathfrak{p}}) 
+ a_{21}({\bf x}, m^-_{\mathfrak{p}}) 
a_{12}(m^-_{\mathfrak{p}}, {\bf tb}^-_{\mathfrak{p}})$ \\ 
$+ a_{22}({\bf x}, {\bf mb}^-_{\mathfrak{p}}) 
+ \mu a_{22}({\bf x}, {\bf mb}^+_{\mathfrak{p}}) 
- a_{21}({\bf x}, t_{\mathfrak{p}}) 
a_{12}(t_{\mathfrak{p}}, {\bf mb}^+_{\mathfrak{p}})$ \\ 
$+ \mu^{-1} a_{21}({\bf x}, t_{\mathfrak{p}}) 
a_{12}(t_{\mathfrak{p}}, {\bf tb}^-_{\mathfrak{p}}) 
+ a_{21}({\bf x}, t_{\mathfrak{p}}) 
a_{12}(t_{\mathfrak{p}}, {\bf tb}^+_{\mathfrak{p}}) 
- \mu^{-1} a_{21}({\bf x}, m^-_{\mathfrak{p}}) 
a_{11}(m^-_{\mathfrak{p}}, t_{\mathfrak{p}}) 
a_{12}(t_{\mathfrak{p}}, {\bf tb}^-_{\mathfrak{p}})$ \\ 
$+ a_{22}({\bf x}, {\bf tm}_{\mathfrak{p}}) 
a_{11}(m^+_{\mathfrak{p}}, b^{+-}_{\mathfrak{p}}) 
+ a_{21}({\bf x}, m^-_{\mathfrak{p}}) 
a_{21}({\bf tm}_{\mathfrak{p}}, b^{+-}_{\mathfrak{p}}) 
- \mu^{-1} a_{21}({\bf x}, t_{\mathfrak{p}}) 
a_{12}(t_{\mathfrak{p}}, {\bf tm}_{\mathfrak{p}}) 
a_{11}(m^+_{\mathfrak{p}}, b^{+-}_{\mathfrak{p}})$, \\ 
$\partial a_{3}^{+-}({\mathfrak{p}})$ 
$= a_{2}({\bf tb}^+_{\mathfrak{p}})$ 
$- a_{2}({\bf tb}^-_{\mathfrak{p}})$ 
$- a_{2}({\bf mb}^+_{\mathfrak{p}})$ 
$+ \mu a_{22}({\bf tb}^-_{\mathfrak{p}}, {\bf mb}^+_{\mathfrak{p}})$ 
$+ a_{2}({\bf mb}^-_{\mathfrak{p}})$ \\ 
$- \mu^{-1} a_{11}(b^{--}_{\mathfrak{p}}, m^-_{\mathfrak{p}}) 
a_{2}({\bf tm}_{\mathfrak{p}}) 
a_{11}(m^-_{\mathfrak{p}}, b^{--}_{\mathfrak{p}})$ 
$- \mu a_{22}({\bf mb}^-_{\mathfrak{p}}, {\bf tb}^+_{\mathfrak{p}})$ 
$- a_{11}(b^{--}_{\mathfrak{p}}, m^-_{\mathfrak{p}}) 
a_{22}({\bf tm}_{\mathfrak{p}}, {\bf mb}^+_{\mathfrak{p}})$ \\ 
$- a_{22}({\bf tb}^-_{\mathfrak{p}}, {\bf tm}_{\mathfrak{p}}) 
(\mu a_{11}(m^-_{\mathfrak{p}}, b^{++}_{\mathfrak{p}}) 
- a_{11}(m^-_{\mathfrak{p}}, t_{\mathfrak{p}}) 
a_{11}(t_{\mathfrak{p}}, b^{++}_{\mathfrak{p}}))$ 
$+ a_{21}({\bf mb}^-_{\mathfrak{p}}, t_{\mathfrak{p}}) 
a_{12}(t_{\mathfrak{p}}, {\bf tb}^+_{\mathfrak{p}})$ \\ 
$- (a_{21}({\bf tb}^-_{\mathfrak{p}}, m^+_{\mathfrak{p}}) 
+ a_{12}(b^{--}_{\mathfrak{p}}, {\bf tm}_{\mathfrak{p}}) 
- \mu^{-1} a_{11}(b^{--}_{\mathfrak{p}}, t_{\mathfrak{p}}) 
a_{12}(t_{\mathfrak{p}}, {\bf tm}_{\mathfrak{p}}))$ 
$(a_{12}(m^+_{\mathfrak{p}}, {\bf mb}^+_{\mathfrak{p}}) 
+ \mu a_{21}({\bf tm}_{\mathfrak{p}}, b^{++}_{\mathfrak{p}}))$ \\ 
$- \mu^{-1} a_{11}(b^{--}_{\mathfrak{p}}, m^-_{\mathfrak{p}}) 
(a_{21}({\bf tm}_{\mathfrak{p}}, m^+_{\mathfrak{p}}) 
+ a_{12}(m^-_{\mathfrak{p}}, {\bf tm}_{\mathfrak{p}}) 
- \mu^{-1} a_{11}(m^-_{\mathfrak{p}}, t_{\mathfrak{p}}) 
a_{12}(t_{\mathfrak{p}}, {\bf tm}_{\mathfrak{p}}))$ \\ 
$(a_{12}(m^+_{\mathfrak{p}}, {\bf tb}^-_{\mathfrak{p}}) 
+ a_{21}({\bf tm}_{\mathfrak{p}}, b^{--}_{\mathfrak{p}}) 
- a_{21}({\bf tm}_{\mathfrak{p}}, t_{\mathfrak{p}}) 
a_{11}(t_{\mathfrak{p}}, b^{--}_{\mathfrak{p}}))$ \\ 
$- \mu a_{31}^{+-}({\mathfrak{p}}, b^{++}_{\mathfrak{p}})$ 
$+ \mu a_{13}^{+-}(b^{--}_{\mathfrak{p}}, {\mathfrak{p}})$ 
$- a_{11}(b^{--}_{\mathfrak{p}}, m^-_{\mathfrak{p}}) 
a_{13}^{+-}(m^-_{\mathfrak{p}}, {\mathfrak{p}})$ 
$+ a_{31}^{+-}({\mathfrak{p}}, t_{\mathfrak{p}}) 
a_{11}(t_{\mathfrak{p}}, b^{++}_{\mathfrak{p}})$, \\ 
$\partial a_{33}^{+-}({\mathfrak{p}}, {\mathfrak{q}})$ 
$= \mu a_{23}^{+-}({\bf tb}^-_{\mathfrak{p}}, {\mathfrak{q}}) 
+ a_{23}^{+-}({\bf tb}^+_{\mathfrak{p}}, {\mathfrak{q}}) 
- a_{21}({\bf tb}^-_{\mathfrak{p}}, m^-_{\mathfrak{p}}) 
a_{13}^{+-}(m^-_{\mathfrak{p}}, {\mathfrak{q}})$ \\ 
$- \mu a_{23}^{+-}({\bf mb}^-_{\mathfrak{p}}, {\mathfrak{q}}) 
- a_{23}^{+-}({\bf mb}^+_{\mathfrak{p}}, {\mathfrak{q}}) 
+ a_{21}({\bf mb}^+_{\mathfrak{p}}, t_{\mathfrak{p}}) 
a_{13}^{+-}(t_{\mathfrak{p}}, {\mathfrak{q}})$ \\ 
$- \mu a_{21}({\bf tb}^-_{\mathfrak{p}}, t_{\mathfrak{p}}) 
a_{13}^{+-}(t_{\mathfrak{p}}, {\mathfrak{q}}) 
- a_{21}({\bf tb}^+_{\mathfrak{p}}, t_{\mathfrak{p}}) 
a_{13}^{+-}(t_{\mathfrak{p}}, {\mathfrak{q}})$ 
$+ a_{21}({\bf tb}^-_{\mathfrak{p}}, t_{\mathfrak{p}}) 
a_{11}(t_{\mathfrak{p}}, m^-_{\mathfrak{p}}) 
a_{13}^{+-}(m^-_{\mathfrak{p}}, {\mathfrak{q}})$ \\ 
$- a_{11}(b^{+-}_{\mathfrak{p}}, m^+_{\mathfrak{p}}) 
a_{23}^{+-}({\bf tm}_{\mathfrak{p}}, {\mathfrak{q}}) 
- a_{12}(b^{+-}_{\mathfrak{p}}, {\bf tm}_{\mathfrak{p}}) 
a_{13}^{+-}(m^-_{\mathfrak{p}}, {\mathfrak{q}})$ 
$+ a_{11}(b^{+-}_{\mathfrak{p}}, m^+_{\mathfrak{p}}) 
a_{21}({\bf tm}_{\mathfrak{p}}, t_{\mathfrak{p}}) 
a_{13}^{+-}(t_{\mathfrak{p}}, {\mathfrak{q}})$ \\ 
$+ a_{32}^{+-}({\mathfrak{p}}, {\bf tb}^-_{\mathfrak{q}}) 
+ \mu a_{32}^{+-}({\mathfrak{p}}, {\bf tb}^+_{\mathfrak{q}}) 
- a_{31}^{+-}({\mathfrak{p}}, m^-_{\mathfrak{q}}) 
a_{12}(m^-_{\mathfrak{q}}, {\bf tb}^-_{\mathfrak{q}})$ \\ 
$- a_{32}^{+-}({\mathfrak{p}}, {\bf mb}^-_{\mathfrak{q}}) 
- \mu a_{32}^{+-}({\mathfrak{p}}, {\bf mb}^+_{\mathfrak{q}}) 
+ a_{31}^{+-}({\mathfrak{p}}, t_{\mathfrak{q}}) 
a_{12}(t_{\mathfrak{q}}, {\bf mb}^+_{\mathfrak{q}})$ \\ 
$- \mu^{-1} a_{31}^{+-}({\mathfrak{p}}, t_{\mathfrak{q}}) 
a_{12}(t_{\mathfrak{q}}, {\bf tb}^-_{\mathfrak{q}}) 
- a_{31}^{+-}({\mathfrak{p}}, t_{\mathfrak{q}}) 
a_{12}(t_{\mathfrak{q}}, {\bf tb}^+_{\mathfrak{q}})$ 
$+ \mu^{-1} a_{31}^{+-}({\mathfrak{p}}, m^-_{\mathfrak{q}}) 
a_{11}(m^-_{\mathfrak{q}}, t_{\mathfrak{q}}) 
a_{12}(t_{\mathfrak{q}}, {\bf tb}^-_{\mathfrak{q}})$ \\ 
$- a_{32}^{+-}({\mathfrak{p}}, {\bf tm}_{\mathfrak{q}}) 
a_{11}(m^+_{\mathfrak{q}}, b^{+-}_{\mathfrak{q}}) 
- a_{31}^{+-}({\mathfrak{p}}, m^-_{\mathfrak{q}}) 
a_{21}({\bf tm}_{\mathfrak{q}}, b^{+-}_{\mathfrak{q}})$ 
$+ \mu^{-1} a_{31}^{+-}({\mathfrak{p}}, t_{\mathfrak{q}}) 
a_{12}(t_{\mathfrak{q}}, {\bf tm}_{\mathfrak{q}}) 
a_{11}(m^+_{\mathfrak{q}}, b^{+-}_{\mathfrak{q}})$, \\ 
$\partial a_{3b}^{+-}({\mathfrak{p}}, {\textsl{k}})$ 
$= \mu a_{2b}({\bf tb}^-_{\mathfrak{p}}, {\textsl{k}}) 
+ a_{2b}({\bf tb}^+_{\mathfrak{p}}, {\textsl{k}}) 
- a_{21}({\bf tb}^-_{\mathfrak{p}}, m^-_{\mathfrak{p}}) 
a_{1b}(m^-_{\mathfrak{p}}, {\textsl{k}})$ \\ 
$- \mu a_{2b}({\bf mb}^-_{\mathfrak{p}}, {\textsl{k}}) 
- a_{2b}({\bf mb}^+_{\mathfrak{p}}, {\textsl{k}}) 
+ a_{21}({\bf mb}^+_{\mathfrak{p}}, t_{\mathfrak{p}}) 
a_{1b}(t_{\mathfrak{p}}, {\textsl{k}})$ \\ 
$- \mu a_{21}({\bf tb}^-_{\mathfrak{p}}, t_{\mathfrak{p}}) 
a_{1b}(t_{\mathfrak{p}}, {\textsl{k}}) 
- a_{21}({\bf tb}^+_{\mathfrak{p}}, t_{\mathfrak{p}}) 
a_{1b}(t_{\mathfrak{p}}, {\textsl{k}}) 
+ a_{21}({\bf tb}^-_{\mathfrak{p}}, t_{\mathfrak{p}}) 
a_{11}(t_{\mathfrak{p}}, m^-_{\mathfrak{p}}) 
a_{1b}(m^-_{\mathfrak{p}}, {\textsl{k}})$ \\ 
$- a_{11}(b^{+-}_{\mathfrak{p}}, m^+_{\mathfrak{p}}) 
a_{2b}({\bf tm}_{\mathfrak{p}}, {\textsl{k}}) 
- a_{12}(b^{+-}_{\mathfrak{p}}, {\bf tm}_{\mathfrak{p}}) 
a_{1b}(m^-_{\mathfrak{p}}, {\textsl{k}}) 
+ a_{11}(b^{+-}_{\mathfrak{p}}, m^+_{\mathfrak{p}}) 
a_{21}({\bf tm}_{\mathfrak{p}}, t_{\mathfrak{p}}) 
a_{1b}(t_{\mathfrak{p}}, {\textsl{k}})$ \\ 
$+ a_{32}^{+-}({\mathfrak{p}}, {\bf dc}_{\textsl{k}})$, \\ 
$\partial a_{b3}^{+-}({\textsl{k}}, {\mathfrak{p}})$ 
$= a_{23}^{+-}({\bf dc}_{\textsl{k}}, {\mathfrak{p}})$ 
$+ a_{b2}({\textsl{k}}, {\bf tb}^-_{\mathfrak{p}}) 
+ \mu a_{b2}({\textsl{k}}, {\bf tb}^+_{\mathfrak{p}}) 
- a_{b1}({\textsl{k}}, m^-_{\mathfrak{p}}) 
a_{12}(m^-_{\mathfrak{p}}, {\bf tb}^-_{\mathfrak{p}})$ \\ 
$- a_{b2}({\textsl{k}}, {\bf mb}^-_{\mathfrak{p}}) 
- \mu a_{b2}({\textsl{k}}, {\bf mb}^+_{\mathfrak{p}}) 
+ a_{b1}({\textsl{k}}, t_{\mathfrak{p}}) 
a_{12}(t_{\mathfrak{p}}, {\bf mb}^+_{\mathfrak{p}})$ \\ 
$- \mu^{-1} a_{b1}({\textsl{k}}, t_{\mathfrak{p}}) 
a_{12}(t_{\mathfrak{p}}, {\bf tb}^-_{\mathfrak{p}}) 
- a_{b1}({\textsl{k}}, t_{\mathfrak{p}}) 
a_{12}(t_{\mathfrak{p}}, {\bf tb}^+_{\mathfrak{p}}) 
+ \mu^{-1} a_{b1}({\textsl{k}}, m^-_{\mathfrak{p}}) 
a_{11}(m^-_{\mathfrak{p}}, t_{\mathfrak{p}}) 
a_{12}(t_{\mathfrak{p}}, {\bf tb}^-_{\mathfrak{p}})$ \\ 
$- a_{b2}({\textsl{k}}, {\bf tm}_{\mathfrak{p}}) 
a_{11}(m^+_{\mathfrak{p}}, b^{+-}_{\mathfrak{p}}) 
- a_{b1}({\textsl{k}}, m^-_{\mathfrak{p}}) 
a_{21}({\bf tm}_{\mathfrak{p}}, b^{+-}_{\mathfrak{p}}) 
+ \mu^{-1} a_{b1}({\textsl{k}}, t_{\mathfrak{p}}) 
a_{12}(t_{\mathfrak{p}}, {\bf tm}_{\mathfrak{p}}) 
a_{11}(m^+_{\mathfrak{p}}, b^{+-}_{\mathfrak{p}})$. \\ 
It is straightforward to see that the equation $\partial \circ \partial = 0$ 
holds on generators of $CR^{+-}(D)$. 

\subsection{$(CR^{++}(D), \partial)$} 
We define 
a differential $\partial$ 
on generators of $CR^{++}(D)$ 
involving triple points as follows. \\ 
$\partial a_{31}^{++}({\mathfrak{p}}, i)$ 
$= \mu a_{21}({\bf tb}^-_{\mathfrak{p}}, i) 
+ a_{21}({\bf tb}^+_{\mathfrak{p}}, i) 
- a_{21}({\bf tb}^+_{\mathfrak{p}}, m^-_{\mathfrak{p}}) 
a_{11}(m^-_{\mathfrak{p}}, i)$ \\ 
$- \mu a_{21}({\bf mb}^-_{\mathfrak{p}}, i) 
- a_{21}({\bf mb}^+_{\mathfrak{p}}, i) 
+ a_{21}({\bf mb}^+_{\mathfrak{p}}, t_{\mathfrak{p}}) 
a_{11}(t_{\mathfrak{p}}, i)$ \\ 
$- \mu a_{21}({\bf tb}^-_{\mathfrak{p}}, t_{\mathfrak{p}}) 
a_{11}(t_{\mathfrak{p}}, i) 
- a_{21}({\bf tb}^+_{\mathfrak{p}}, t_{\mathfrak{p}}) 
a_{11}(t_{\mathfrak{p}}, i) 
+ a_{21}({\bf tb}^+_{\mathfrak{p}}, t_{\mathfrak{p}}) 
a_{11}(t_{\mathfrak{p}}, m^-_{\mathfrak{p}}) 
a_{11}(m^-_{\mathfrak{p}}, i)$ \\ 
$+ \mu a_{21}({\bf mb}^-_{\mathfrak{p}}, m^-_{\mathfrak{p}}) 
a_{11}(m^-_{\mathfrak{p}}, i) 
+ a_{21}({\bf mb}^+_{\mathfrak{p}}, m^+_{\mathfrak{p}}) 
a_{11}(m^+_{\mathfrak{p}}, i) 
- a_{21}({\bf mb}^+_{\mathfrak{p}}, m^+_{\mathfrak{p}}) 
a_{11}(m^+_{\mathfrak{p}}, t_{\mathfrak{p}}) 
a_{11}(t_{\mathfrak{p}}, i)$ \\ 
$- a_{11}(b^{++}_{\mathfrak{p}}, m^+_{\mathfrak{p}}) 
a_{21}({\bf tm}_{\mathfrak{p}}, i) 
- a_{12}(b^{++}_{\mathfrak{p}}, {\bf tm}_{\mathfrak{p}}) 
a_{11}(m^-_{\mathfrak{p}}, i) 
+ a_{11}(b^{++}_{\mathfrak{p}}, m^+_{\mathfrak{p}}) 
a_{21}({\bf tm}_{\mathfrak{p}}, t_{\mathfrak{p}}) 
a_{11}(t_{\mathfrak{p}}, i)$, \\ 
$\partial a_{13}^{++}(i, {\mathfrak{p}})$ 
$= a_{12}(i, {\bf tb}^-_{\mathfrak{p}}) 
+ \mu a_{12}(i, {\bf tb}^+_{\mathfrak{p}}) 
- a_{11}(i, m^-_{\mathfrak{p}}) 
a_{12}(m^-_{\mathfrak{p}}, {\bf tb}^+_{\mathfrak{p}})$ \\ 
$- a_{12}(i, {\bf mb}^-_{\mathfrak{p}}) 
- \mu a_{12}(i, {\bf mb}^+_{\mathfrak{p}}) 
+ a_{11}(i, t_{\mathfrak{p}}) 
a_{12}(t_{\mathfrak{p}}, {\bf mb}^+_{\mathfrak{p}})$ \\ 
$- \mu^{-1} a_{11}(i, t_{\mathfrak{p}}) 
a_{12}(t_{\mathfrak{p}}, {\bf tb}^-_{\mathfrak{p}}) 
- a_{11}(i, t_{\mathfrak{p}}) 
a_{12}(t_{\mathfrak{p}}, {\bf tb}^+_{\mathfrak{p}}) 
+ \mu^{-1} a_{11}(i, m^-_{\mathfrak{p}}) 
a_{11}(m^-_{\mathfrak{p}}, t_{\mathfrak{p}}) 
a_{12}(t_{\mathfrak{p}}, {\bf tb}^+_{\mathfrak{p}})$ \\ 
$+ \mu^{-1} a_{11}(i, m^-_{\mathfrak{p}}) 
a_{12}(m^-_{\mathfrak{p}}, {\bf mb}^-_{\mathfrak{p}}) 
+ a_{11}(i, m^+_{\mathfrak{p}}) 
a_{12}(m^+_{\mathfrak{p}}, {\bf mb}^+_{\mathfrak{p}}) 
- \mu^{-1} a_{11}(i, t_{\mathfrak{p}}) 
a_{11}(t_{\mathfrak{p}}, m^+_{\mathfrak{p}}) 
a_{12}(m^+_{\mathfrak{p}}, {\bf mb}^+_{\mathfrak{p}})$ \\ 
$- a_{12}(i, {\bf tm}_{\mathfrak{p}}) 
a_{11}(m^+_{\mathfrak{p}}, b^{++}_{\mathfrak{p}}) 
- a_{11}(i, m^-_{\mathfrak{p}}) 
a_{21}({\bf tm}_{\mathfrak{p}}, b^{++}_{\mathfrak{p}}) 
+ \mu^{-1} a_{11}(i, t_{\mathfrak{p}}) 
a_{12}(t_{\mathfrak{p}}, {\bf tm}_{\mathfrak{p}}) 
a_{11}(m^+_{\mathfrak{p}}, b^{++}_{\mathfrak{p}})$, \\ 
$\partial a_{32}^{++}({\mathfrak{p}}, {\bf x})$ 
$= \mu a_{22}({\bf tb}^-_{\mathfrak{p}}, {\bf x}) 
+ a_{22}({\bf tb}^+_{\mathfrak{p}}, {\bf x}) 
- a_{21}({\bf tb}^+_{\mathfrak{p}}, m^-_{\mathfrak{p}}) 
a_{12}(m^-_{\mathfrak{p}}, {\bf x})$ \\ 
$- \mu a_{22}({\bf mb}^-_{\mathfrak{p}}, {\bf x}) 
- a_{22}({\bf mb}^+_{\mathfrak{p}}, {\bf x}) 
+ a_{21}({\bf mb}^+_{\mathfrak{p}}, t_{\mathfrak{p}}) 
a_{12}(t_{\mathfrak{p}}, {\bf x})$ \\ 
$- \mu a_{21}({\bf tb}^-_{\mathfrak{p}}, t_{\mathfrak{p}}) 
a_{12}(t_{\mathfrak{p}}, {\bf x}) 
- a_{21}({\bf tb}^+_{\mathfrak{p}}, t_{\mathfrak{p}}) 
a_{12}(t_{\mathfrak{p}}, {\bf x}) 
+ a_{21}({\bf tb}^+_{\mathfrak{p}}, t_{\mathfrak{p}}) 
a_{11}(t_{\mathfrak{p}}, m^-_{\mathfrak{p}}) 
a_{12}(m^-_{\mathfrak{p}}, {\bf x})$ \\ 
$+ \mu a_{21}({\bf mb}^-_{\mathfrak{p}}, m^-_{\mathfrak{p}}) 
a_{12}(m^-_{\mathfrak{p}}, {\bf x}) 
+ a_{21}({\bf mb}^+_{\mathfrak{p}}, m^+_{\mathfrak{p}}) 
a_{12}(m^+_{\mathfrak{p}}, {\bf x}) 
- a_{21}({\bf mb}^+_{\mathfrak{p}}, m^+_{\mathfrak{p}}) 
a_{11}(m^+_{\mathfrak{p}}, t_{\mathfrak{p}}) 
a_{12}(t_{\mathfrak{p}}, {\bf x})$ \\ 
$- a_{11}(b^{++}_{\mathfrak{p}}, m^+_{\mathfrak{p}}) 
a_{22}({\bf tm}_{\mathfrak{p}}, {\bf x}) 
- a_{12}(b^{++}_{\mathfrak{p}}, {\bf tm}_{\mathfrak{p}}) 
a_{12}(m^-_{\mathfrak{p}}, {\bf x}) 
+ a_{11}(b^{++}_{\mathfrak{p}}, m^+_{\mathfrak{p}}) 
a_{21}({\bf tm}_{\mathfrak{p}}, t_{\mathfrak{p}}) 
a_{12}(t_{\mathfrak{p}}, {\bf x})$ \\ 
$- a_{31}^{++}({\mathfrak{p}}, u^-_{\bf x}) 
- \mu a_{31}^{++}({\mathfrak{p}}, u^+_{\bf x}) 
+ a_{31}^{++}({\mathfrak{p}}, o_{\bf x}) 
a_{11}(o_{\bf x}, u^-_{\bf x})$, \\ 
$\partial a_{23}^{++}({\bf x}, {\mathfrak{p}})$ 
$= \mu a_{13}^{++}(u^-_{\bf x}, {\mathfrak{p}}) 
+ a_{13}^{++}(u^+_{\bf x}, {\mathfrak{p}}) 
- a_{11}(u^-_{\bf x}, o_{\bf x}) 
a_{13}^{++}(o_{\bf x}, {\mathfrak{p}})$ \\ 
$- a_{22}({\bf x}, {\bf tb}^-_{\mathfrak{p}}) 
- \mu a_{22}({\bf x}, {\bf tb}^+_{\mathfrak{p}}) 
+ a_{21}({\bf x}, m^-_{\mathfrak{p}}) 
a_{12}(m^-_{\mathfrak{p}}, {\bf tb}^+_{\mathfrak{p}})$ \\ 
$+ a_{22}({\bf x}, {\bf mb}^-_{\mathfrak{p}}) 
+ \mu a_{22}({\bf x}, {\bf mb}^+_{\mathfrak{p}}) 
- a_{21}({\bf x}, t_{\mathfrak{p}}) 
a_{12}(t_{\mathfrak{p}}, {\bf mb}^+_{\mathfrak{p}})$ \\ 
$+ \mu^{-1} a_{21}({\bf x}, t_{\mathfrak{p}}) 
a_{12}(t_{\mathfrak{p}}, {\bf tb}^-_{\mathfrak{p}}) 
+ a_{21}({\bf x}, t_{\mathfrak{p}}) 
a_{12}(t_{\mathfrak{p}}, {\bf tb}^+_{\mathfrak{p}}) 
- \mu^{-1} a_{21}({\bf x}, m^-_{\mathfrak{p}}) 
a_{11}(m^-_{\mathfrak{p}}, t_{\mathfrak{p}}) 
a_{12}(t_{\mathfrak{p}}, {\bf tb}^+_{\mathfrak{p}})$ \\ 
$- \mu^{-1} a_{21}({\bf x}, m^-_{\mathfrak{p}}) 
a_{12}(m^-_{\mathfrak{p}}, {\bf mb}^-_{\mathfrak{p}}) 
- a_{21}({\bf x}, m^+_{\mathfrak{p}}) 
a_{12}(m^+_{\mathfrak{p}}, {\bf mb}^+_{\mathfrak{p}}) 
+ \mu^{-1} a_{21}({\bf x}, t_{\mathfrak{p}}) 
a_{11}(t_{\mathfrak{p}}, m^+_{\mathfrak{p}}) 
a_{12}(m^+_{\mathfrak{p}}, {\bf mb}^+_{\mathfrak{p}})$ \\ 
$+ a_{22}({\bf x}, {\bf tm}_{\mathfrak{p}}) 
a_{11}(m^+_{\mathfrak{p}}, b^{++}_{\mathfrak{p}}) 
+ a_{21}({\bf x}, m^-_{\mathfrak{p}}) 
a_{21}({\bf tm}_{\mathfrak{p}}, b^{++}_{\mathfrak{p}}) 
- \mu^{-1} a_{21}({\bf x}, t_{\mathfrak{p}}) 
a_{12}(t_{\mathfrak{p}}, {\bf tm}_{\mathfrak{p}}) 
a_{11}(m^+_{\mathfrak{p}}, b^{++}_{\mathfrak{p}})$, \\ 
$\partial a_{33}^{++}({\mathfrak{p}}, {\mathfrak{q}})$ 
$= \mu a_{23}^{++}({\bf tb}^-_{\mathfrak{p}}, {\mathfrak{q}}) 
+ a_{23}^{++}({\bf tb}^+_{\mathfrak{p}}, {\mathfrak{q}}) 
- a_{21}({\bf tb}^+_{\mathfrak{p}}, m^-_{\mathfrak{p}}) 
a_{13}^{++}(m^-_{\mathfrak{p}}, {\mathfrak{q}})$ \\ 
$- \mu a_{23}^{++}({\bf mb}^-_{\mathfrak{p}}, {\mathfrak{q}}) 
- a_{23}^{++}({\bf mb}^+_{\mathfrak{p}}, {\mathfrak{q}}) 
+ a_{21}({\bf mb}^+_{\mathfrak{p}}, t_{\mathfrak{p}}) 
a_{13}^{++}(t_{\mathfrak{p}}, {\mathfrak{q}})$ \\ 
$- \mu a_{21}({\bf tb}^-_{\mathfrak{p}}, t_{\mathfrak{p}}) 
a_{13}^{++}(t_{\mathfrak{p}}, {\mathfrak{q}}) 
- a_{21}({\bf tb}^+_{\mathfrak{p}}, t_{\mathfrak{p}}) 
a_{13}^{++}(t_{\mathfrak{p}}, {\mathfrak{q}})$ 
$+ a_{21}({\bf tb}^+_{\mathfrak{p}}, t_{\mathfrak{p}}) 
a_{11}(t_{\mathfrak{p}}, m^-_{\mathfrak{p}}) 
a_{13}^{++}(m^-_{\mathfrak{p}}, {\mathfrak{q}})$ \\ 
$+ \mu a_{21}({\bf mb}^-_{\mathfrak{p}}, m^-_{\mathfrak{p}}) 
a_{13}^{++}(m^-_{\mathfrak{p}}, {\mathfrak{q}}) 
+ a_{21}({\bf mb}^+_{\mathfrak{p}}, m^+_{\mathfrak{p}}) 
a_{13}^{++}(m^+_{\mathfrak{p}}, {\mathfrak{q}})$ 
$- a_{21}({\bf mb}^+_{\mathfrak{p}}, m^+_{\mathfrak{p}}) 
a_{11}(m^+_{\mathfrak{p}}, t_{\mathfrak{p}}) 
a_{13}^{++}(t_{\mathfrak{p}}, {\mathfrak{q}})$ \\ 
$- a_{11}(b^{++}_{\mathfrak{p}}, m^+_{\mathfrak{p}}) 
a_{23}^{++}({\bf tm}_{\mathfrak{p}}, {\mathfrak{q}}) 
- a_{12}(b^{++}_{\mathfrak{p}}, {\bf tm}_{\mathfrak{p}}) 
a_{13}^{++}(m^-_{\mathfrak{p}}, {\mathfrak{q}})$ 
$+ a_{11}(b^{++}_{\mathfrak{p}}, m^+_{\mathfrak{p}}) 
a_{21}({\bf tm}_{\mathfrak{p}}, t_{\mathfrak{p}}) 
a_{13}^{++}(t_{\mathfrak{p}}, {\mathfrak{q}})$ \\ 
$+ a_{32}^{++}({\mathfrak{p}}, {\bf tb}^-_{\mathfrak{q}}) 
+ \mu a_{32}^{++}({\mathfrak{p}}, {\bf tb}^+_{\mathfrak{q}}) 
- a_{31}^{++}({\mathfrak{p}}, m^-_{\mathfrak{q}}) 
a_{12}(m^-_{\mathfrak{q}}, {\bf tb}^+_{\mathfrak{q}})$ \\ 
$- a_{32}^{++}({\mathfrak{p}}, {\bf mb}^-_{\mathfrak{q}}) 
- \mu a_{32}^{++}({\mathfrak{p}}, {\bf mb}^+_{\mathfrak{q}}) 
+ a_{31}^{++}({\mathfrak{p}}, t_{\mathfrak{q}}) 
a_{12}(t_{\mathfrak{q}}, {\bf mb}^+_{\mathfrak{q}})$ \\ 
$- \mu^{-1} a_{31}^{++}({\mathfrak{p}}, t_{\mathfrak{q}}) 
a_{12}(t_{\mathfrak{q}}, {\bf tb}^-_{\mathfrak{q}}) 
- a_{31}^{++}({\mathfrak{p}}, t_{\mathfrak{q}}) 
a_{12}(t_{\mathfrak{q}}, {\bf tb}^+_{\mathfrak{q}})$ 
$+ \mu^{-1} a_{31}^{++}({\mathfrak{p}}, m^-_{\mathfrak{q}}) 
a_{11}(m^-_{\mathfrak{q}}, t_{\mathfrak{q}}) 
a_{12}(t_{\mathfrak{q}}, {\bf tb}^+_{\mathfrak{q}})$ \\ 
$+ \mu^{-1} a_{31}^{++}({\mathfrak{p}}, m^-_{\mathfrak{q}}) 
a_{12}(m^-_{\mathfrak{q}}, {\bf mb}^-_{\mathfrak{q}}) 
+ a_{31}^{++}({\mathfrak{p}}, m^+_{\mathfrak{q}}) 
a_{12}(m^+_{\mathfrak{q}}, {\bf mb}^+_{\mathfrak{q}})$ 
$- \mu^{-1} a_{31}^{++}({\mathfrak{p}}, t_{\mathfrak{q}}) 
a_{11}(t_{\mathfrak{q}}, m^+_{\mathfrak{q}}) 
a_{12}(m^+_{\mathfrak{q}}, {\bf mb}^+_{\mathfrak{q}})$ \\ 
$- a_{32}^{++}({\mathfrak{p}}, {\bf tm}_{\mathfrak{q}}) 
a_{11}(m^+_{\mathfrak{q}}, b^{++}_{\mathfrak{q}}) 
- a_{31}^{++}({\mathfrak{p}}, m^-_{\mathfrak{q}}) 
a_{21}({\bf tm}_{\mathfrak{q}}, b^{++}_{\mathfrak{q}})$ 
$+ \mu^{-1} a_{31}^{++}({\mathfrak{p}}, t_{\mathfrak{q}}) 
a_{12}(t_{\mathfrak{q}}, {\bf tm}_{\mathfrak{q}}) 
a_{11}(m^+_{\mathfrak{q}}, b^{++}_{\mathfrak{q}})$, \\ 
$\partial a_{3}^{++}({\mathfrak{p}})$ 
$= a_{2}({\bf tb}^+_{\mathfrak{p}})$ 
$- a_{2}({\bf tb}^-_{\mathfrak{p}})$  
$- a_{2}({\bf mb}^+_{\mathfrak{p}})$  
$+ \mu a_{22}({\bf tb}^-_{\mathfrak{p}}, {\bf mb}^+_{\mathfrak{p}})$ 
$- a_{21}({\bf tb}^-_{\mathfrak{p}}, m^+_{\mathfrak{p}}) 
a_{12}(m^+_{\mathfrak{p}}, {\bf mb}^+_{\mathfrak{p}})$ \\ 
$- \mu (a_{21}({\bf tb}^-_{\mathfrak{p}}, m^+_{\mathfrak{p}})$ 
$+ a_{12}(b^{--}_{\mathfrak{p}}, {\bf tm}_{\mathfrak{p}})$ 
$+ a_{21}({\bf mb}^+_{\mathfrak{p}}, m^+_{\mathfrak{p}})$ 
$- \mu^{-1} a_{11}(b^{--}_{\mathfrak{p}}, t_{\mathfrak{p}}) 
a_{12}(t_{\mathfrak{p}}, {\bf tm}_{\mathfrak{p}})) 
a_{21}({\bf tm}_{\mathfrak{p}}, b^{++}_{\mathfrak{p}})$ \\ 
$+ (a_{11}(b^{++}_{\mathfrak{p}}, m^+_{\mathfrak{p}}) 
a_{22}({\bf tm}_{\mathfrak{p}}, {\bf tm}_{\mathfrak{p}}) 
- a_{11}(b^{++}_{\mathfrak{p}}, m^+_{\mathfrak{p}}) 
a_{2}({\bf tm}_{\mathfrak{p}}) 
- \mu a_{22}({\bf tb}^-_{\mathfrak{p}}, {\bf tm}_{\mathfrak{p}}) 
+ a_{22}({\bf mb}^+_{\mathfrak{p}}, {\bf tm}_{\mathfrak{p}})$ \\ 
$- a_{21}({\bf mb}^+_{\mathfrak{p}}, m^+_{\mathfrak{p}}) 
a_{12}(m^+_{\mathfrak{p}}, {\bf tm}_{\mathfrak{p}}))$ 
$(a_{11}(m^-_{\mathfrak{p}}, b^{++}_{\mathfrak{p}}) 
- \mu^{-1} a_{11}(m^-_{\mathfrak{p}}, t_{\mathfrak{p}}) 
a_{11}(t_{\mathfrak{p}}, b^{++}_{\mathfrak{p}}))$ \\ 
$+ a_{31}^{++}({\mathfrak{p}}, t_{\mathfrak{p}}) 
a_{11}(t_{\mathfrak{p}}, b^{++}_{\mathfrak{p}})$ 
$+ a_{31}^{++}({\mathfrak{p}}, m^-_{\mathfrak{p}})$ 
$(a_{11}(m^-_{\mathfrak{p}}, b^{++}_{\mathfrak{p}}) 
- \mu^{-1} a_{11}(m^-_{\mathfrak{p}}, t_{\mathfrak{p}}) 
a_{11}(t_{\mathfrak{p}}, b^{++}_{\mathfrak{p}}))$ \\ 
$- \mu a_{31}^{++}({\mathfrak{p}}, b^{++}_{\mathfrak{p}})$ 
$+ \mu a_{13}^{++}(b^{--}_{\mathfrak{p}}, {\mathfrak{p}})$ 
$- \mu a_{22}({\bf mb}^-_{\mathfrak{p}}, {\bf tb}^+_{\mathfrak{p}})$ 
$+ a_{21}({\bf mb}^-_{\mathfrak{p}}, t_{\mathfrak{p}}) 
a_{12}(t_{\mathfrak{p}}, {\bf tb}^+_{\mathfrak{p}})$ 
$+ a_{2}({\bf mb}^-_{\mathfrak{p}})$, \\ 
$\partial a_{3b}^{++}({\mathfrak{p}}, {\textsl{k}})$ 
$= \mu a_{2b}({\bf tb}^-_{\mathfrak{p}}, {\textsl{k}}) 
+ a_{2b}({\bf tb}^+_{\mathfrak{p}}, {\textsl{k}}) 
- a_{21}({\bf tb}^+_{\mathfrak{p}}, m^-_{\mathfrak{p}}) 
a_{1b}(m^-_{\mathfrak{p}}, {\textsl{k}})$ \\ 
$- \mu a_{2b}({\bf mb}^-_{\mathfrak{p}}, {\textsl{k}}) 
- a_{2b}({\bf mb}^+_{\mathfrak{p}}, {\textsl{k}}) 
+ a_{21}({\bf mb}^+_{\mathfrak{p}}, t_{\mathfrak{p}}) 
a_{1b}(t_{\mathfrak{p}}, {\textsl{k}})$ \\ 
$- \mu a_{21}({\bf tb}^-_{\mathfrak{p}}, t_{\mathfrak{p}}) 
a_{1b}(t_{\mathfrak{p}}, {\textsl{k}}) 
- a_{21}({\bf tb}^+_{\mathfrak{p}}, t_{\mathfrak{p}}) 
a_{1b}(t_{\mathfrak{p}}, {\textsl{k}}) 
+ a_{21}({\bf tb}^+_{\mathfrak{p}}, t_{\mathfrak{p}}) 
a_{11}(t_{\mathfrak{p}}, m^-_{\mathfrak{p}}) 
a_{1b}(m^-_{\mathfrak{p}}, {\textsl{k}})$ \\ 
$+ \mu a_{21}({\bf mb}^-_{\mathfrak{p}}, m^-_{\mathfrak{p}}) 
a_{1b}(m^-_{\mathfrak{p}}, {\textsl{k}}) 
+ a_{21}({\bf mb}^+_{\mathfrak{p}}, m^+_{\mathfrak{p}}) 
a_{1b}(m^+_{\mathfrak{p}}, {\textsl{k}}) 
- a_{21}({\bf mb}^+_{\mathfrak{p}}, m^+_{\mathfrak{p}}) 
a_{11}(m^+_{\mathfrak{p}}, t_{\mathfrak{p}}) 
a_{1b}(t_{\mathfrak{p}}, {\textsl{k}})$ \\ 
$- a_{11}(b^{++}_{\mathfrak{p}}, m^+_{\mathfrak{p}}) 
a_{2b}({\bf tm}_{\mathfrak{p}}, {\textsl{k}}) 
- a_{12}(b^{++}_{\mathfrak{p}}, {\bf tm}_{\mathfrak{p}}) 
a_{1b}(m^-_{\mathfrak{p}}, {\textsl{k}}) 
+ a_{11}(b^{++}_{\mathfrak{p}}, m^+_{\mathfrak{p}}) 
a_{21}({\bf tm}_{\mathfrak{p}}, t_{\mathfrak{p}}) 
a_{1b}(t_{\mathfrak{p}}, {\textsl{k}})$ \\ 
$+ a_{32}^{++}({\mathfrak{p}}, {\bf dc}_{\textsl{k}})$, \\ 
$\partial a_{b3}^{++}({\textsl{k}}, {\mathfrak{p}})$ 
$= a_{23}^{++}({\bf dc}_{\textsl{k}}, {\mathfrak{p}})$ 
$+ a_{b2}({\textsl{k}}, {\bf tb}^-_{\mathfrak{p}}) 
+ \mu a_{b2}({\textsl{k}}, {\bf tb}^+_{\mathfrak{p}}) 
- a_{b1}({\textsl{k}}, m^-_{\mathfrak{p}}) 
a_{12}(m^-_{\mathfrak{p}}, {\bf tb}^+_{\mathfrak{p}})$ \\ 
$- a_{b2}({\textsl{k}}, {\bf mb}^-_{\mathfrak{p}}) 
- \mu a_{b2}({\textsl{k}}, {\bf mb}^+_{\mathfrak{p}}) 
+ a_{b1}({\textsl{k}}, t_{\mathfrak{p}}) 
a_{12}(t_{\mathfrak{p}}, {\bf mb}^+_{\mathfrak{p}})$ \\ 
$- \mu^{-1} a_{b1}({\textsl{k}}, t_{\mathfrak{p}}) 
a_{12}(t_{\mathfrak{p}}, {\bf tb}^-_{\mathfrak{p}}) 
- a_{b1}({\textsl{k}}, t_{\mathfrak{p}}) 
a_{12}(t_{\mathfrak{p}}, {\bf tb}^+_{\mathfrak{p}}) 
+ \mu^{-1} a_{b1}({\textsl{k}}, m^-_{\mathfrak{p}}) 
a_{11}(m^-_{\mathfrak{p}}, t_{\mathfrak{p}}) 
a_{12}(t_{\mathfrak{p}}, {\bf tb}^+_{\mathfrak{p}})$ \\ 
$+ \mu^{-1} a_{b1}({\textsl{k}}, m^-_{\mathfrak{p}}) 
a_{12}(m^-_{\mathfrak{p}}, {\bf mb}^-_{\mathfrak{p}}) 
+ a_{b1}({\textsl{k}}, m^+_{\mathfrak{p}}) 
a_{12}(m^+_{\mathfrak{p}}, {\bf mb}^+_{\mathfrak{p}}) 
- \mu^{-1} a_{b1}({\textsl{k}}, t_{\mathfrak{p}}) 
a_{11}(t_{\mathfrak{p}}, m^+_{\mathfrak{p}}) 
a_{12}(m^+_{\mathfrak{p}}, {\bf mb}^+_{\mathfrak{p}})$ \\ 
$- a_{b2}({\textsl{k}}, {\bf tm}_{\mathfrak{p}}) 
a_{11}(m^+_{\mathfrak{p}}, b^{++}_{\mathfrak{p}}) 
- a_{b1}({\textsl{k}}, m^-_{\mathfrak{p}}) 
a_{21}({\bf tm}_{\mathfrak{p}}, b^{++}_{\mathfrak{p}}) 
+ \mu^{-1} a_{b1}({\textsl{k}}, t_{\mathfrak{p}}) 
a_{12}(t_{\mathfrak{p}}, {\bf tm}_{\mathfrak{p}}) 
a_{11}(m^+_{\mathfrak{p}}, b^{++}_{\mathfrak{p}})$. \\ 
It is straightforward to see that the equation $\partial \circ \partial = 0$ 
holds on generators of $CR^{++}(D)$.

\section{Examples}

\begin{figure}
\begin{center}
\includegraphics{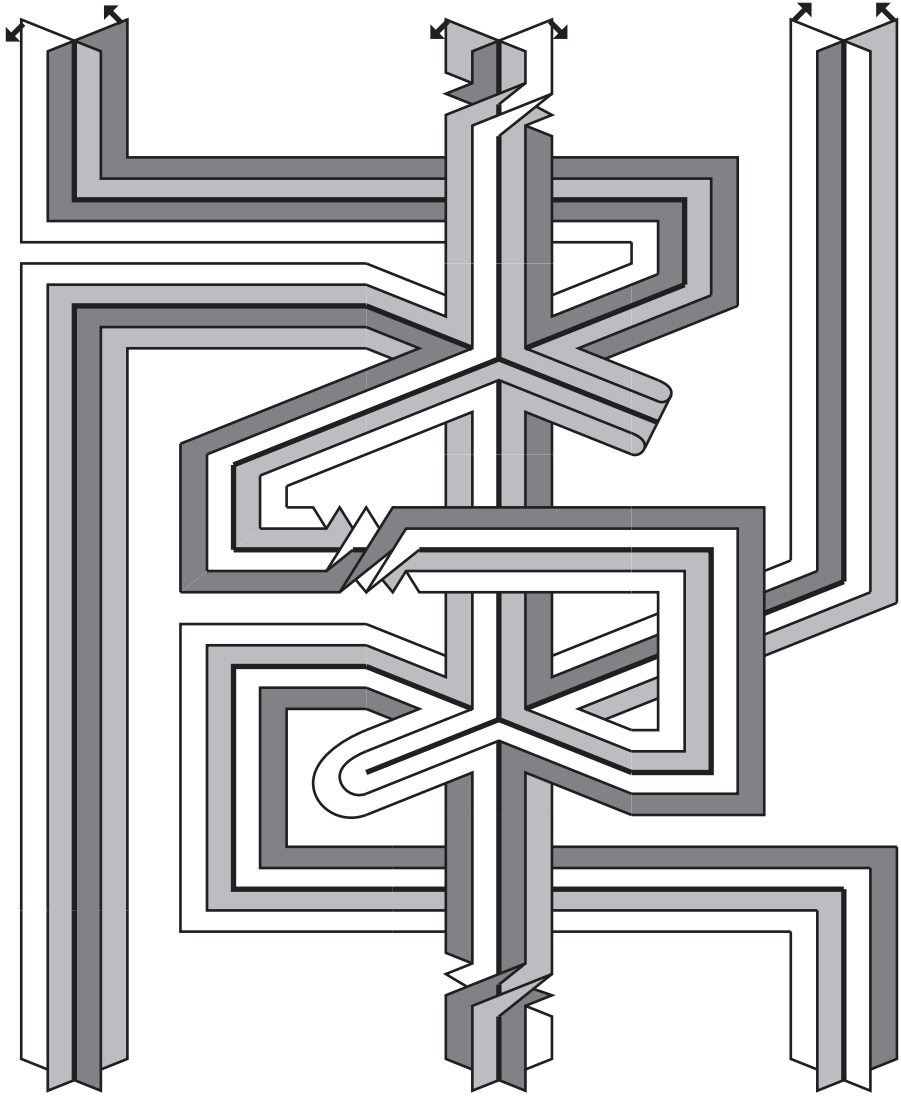}
\caption{one piece of $D^2(2, 3)$} 
\label{2twist0}
\end{center}
\end{figure}

Let $T(2, 3)$ denote a torus knot of type $(2, 3)$ in ${\mathbb{R}}^3$. 
Let $T^0(2, 3)$ denote a 2-sphere in ${\mathbb{R}}^4$ 
that is obtained from $T(2, 3)$ 
by the spinning construction, 
introduced by Artin \cite{artin}. 
Let $T^n(2, 3)$ denote 
a 2-sphere 
in ${\mathbb{R}}^4$ 
that is obtained from $T(2, 3)$ 
by the twist-spinning construction, introduced by Zeeman \cite{zeeman}, 
where $n \in {\mathbb{Z}}$. 
Satoh and Shima \cite{satoh-shima} 
exhibit an explicit construction of a diagram $D^n(2, 3)$ of $T^n(2, 3)$. 
In particular, 
we obtain a diagram $D^2(2, 3)$ by cyclically concatenating two copies 
of the diagram illustrated in Figure \ref{2twist0}. 
 
\vskip 6pt

\noindent 
{\it Proof of Theorem \ref{2-twist}}. 
The diagram $D^2(2, 3)$ 
lets us describe the differential of 
$(CR^{\varepsilon \delta}(D^2(2, 3)), \partial)$ 
on generators. 
We recall a characteristic algebra ${\mathcal{C}}^{\varepsilon \delta}(D)$ 
that is derived from a differential graded algebra 
$(CR^{\varepsilon \delta}(D), \partial)$, 
where $D$ denotes a diagram of a surface-knot in ${\mathbb{R}}^4$. 
A direct calculation shows that 
the numbers of algebra maps 
${\mathcal{C}}^{\varepsilon \delta}(D^2(2, 3)) 
\otimes {\mathbb{Z}}/3{\mathbb{Z}} 
\to {\mathbb{Z}}/3{\mathbb{Z}}$ 
are $1 + 3^9, 1 + 3^8, 1 + 3^8, 1 + 3^{10}$, 
up to powers of $3$, 
when $(\varepsilon, \delta) = (-, -), (-, +), (+, -), (+, +)$, respectively. 
This shows that 
${\mathcal{C}}^{--}(D^2(2, 3))$ is not equivalent to 
${\mathcal{C}}^{\varepsilon \delta}(D^2(2, 3))$ 
when $(\varepsilon, \delta) = (-, +), (+, -), (+, +)$. 
It follows that $(CR^{--}(D^2(2, 3)), \partial)$ is not 
stably tame isomorphic to $(CR^{\varepsilon \delta}(D^2(2, 3)), \partial)$ 
when $(\varepsilon, \delta) = (-, +), (+, -), (+, +)$. 
Similarly 
$(CR^{++} (D^2(2, 3)), \partial)$ is not stably tame isomorphic to 
$(CR^{-+} (D^2(2, 3)), \partial)$, $(CR^{+-} (D^2(2, 3)), \partial)$. 
\hfill $\qed$

\vskip 6pt

\noindent 
{\it Proof of Theorem \ref{plus-minus}}. 
We notice that 
$T^0(2, 3)$ has a diagram $D^0(2, 3)$ without triple points. 
Therefore 
we can construct $(CR^{\varepsilon \delta}(D^0(2, 3)), \partial)$ 
whose generators do not involve triple points. 
It follows that 
$(CR^{--}(D^0(2, 3)), \partial)$ is isomorphic to 
each of $(CR^{-+}(D^0(2, 3)), \partial)$, 
$(CR^{+-}(D^0(2, 3)), \partial)$ and 
$(CR^{++}(D^0(2, 3)), \partial)$. 
Theorems \ref{main-theorem} and \ref{2-twist} show that 
$T^2(2, 3)$ is not ambient isotopic to $T^0(2, 3)$ 
in ${\mathbb{R}}^4$. 
\hfill $\qed$

\end{document}